\theoremstyle{plain}
\newtheorem{thm}{Theorem}[section]
\newtheorem{cor}[thm]{Corollary}
\newtheorem{lem}[thm]{Lemma}
\newtheorem{prop}[thm]{Proposition}
\theoremstyle{definition}
\newtheorem{defn}[thm]{Definition}
\numberwithin{equation}{section}
\newcommand{\ci}[1]{_{{}_{\scriptstyle{#1}}}}
\def\R1{\widetilde{R}}
\def\T1{\widetilde{T}}
\def\Tbar{\overline{T}}
\def\dist{\operatorname{dist}}
\def\supp{\operatorname{supp}}
\def\Lip{\operatorname{Lip}}
\def\eps{\varepsilon}
\def\kap{\varkappa}
\def\loc{\operatorname{loc}}
\def\e{\textbf{e}}
\def\R{\mathbb{R}}
\def\MBD{\mathfrak{M}_{\operatorname{bounded}}}
\def\MS{\mathfrak{M}_{\operatorname{smooth}}}
\def\MD{\mathfrak{M}_{\operatorname{decent}}}
\def\BMD{\overline{\mathfrak{M}_{\operatorname{decent}}}}
\def\pleas{\operatorname{diffuse}}
\def\XXint#1#2#3{{\setbox0=\hbox{$#1{#2#3}{\int}$}
     \vcenter{\hbox{$#2#3$}}\kern-.5\wd0}}
\begin{document}

\title[Reflectionless measures I]
{Reflectionless measures for Calder\'{o}n-Zygmund operators I:\\
General Theory.}

\author[B. Jaye]
{Benjamin Jaye}
\address{Department of Mathematical Sciences,
Kent State University,
Kent, OH 44240, USA}
\email{bjaye@kent.edu}

\author[F. Nazarov]
{Fedor Nazarov}
\email{nazarov@math.kent.edu}

\date{\today}

\begin{abstract}  We study the properties of reflectionless measures for an $s$-dimensional Calder\'{o}n-Zygmund operator $T$ acting in $\mathbb{R}^d$, where $s\in (0,d)$.  Roughly speaking, these are the measures $\mu$ for which $T(\mu)$ is constant on the support of the measure. In this series of papers, we develop the basic theory of reflectionless measures, and describe the relationship between the description of reflectionless measures and certain well-known problems in harmonic analysis and geometric measure theory.  \end{abstract}

\maketitle

\section{Introduction}

Fix an integer $d\geq 2$, and let $s\in (0,d)$.  For a measure $\mu$,  the $s$-dimensional Calder\'{o}n-Zygmund operator (CZO) $T_{\mu}$ is formally defined by $T_{\mu}(f)(x) = \int_{\mathbb{R}^d} K(x-y)f(y)\,d\mu(y)$, where $K$ is an odd $s$-dimensional Calder\'{o}n-Zygmund (CZ)-kernel (see Section \ref{primer} for the definition).

In this series of papers, we consider the structure and properties of reflectionless measures for $s$-dimensional CZOs.  These are the measures $\mu$ for which $T_{\mu}(1)$ is constant on the support of $\mu$, where this statement is interpreted in a suitable weak sense.  This study was motivated by certain problems concerning the geometry of measures with bounded $s$-Riesz transform, the CZO with kernel $K(x) = \tfrac{x}{|x|^{s+1}}$.

\subsection{Description of Contents.} Before we describe the general theory of reflectionless measures further, we pause to give a brief outline of the contents of the three papers which comprise this series.\\

\textbf{Part I}  is devoted to the general theory of reflectionless measures.   The notion of a reflectionless measure is introduced, and basic regularity properties of reflectionless measures are presented, along with some tools that will be needed in Parts II and III.\\

\textbf{Part II}  concerns the study of \textit{rigid} CZOs, i.e., the operators that have very few reflectionless measures associated to them.   We show that the measures $\mu$ for which a rigid CZO $T$ is bounded in $L^2(\mu)$ must have robust geometric structure.  Here the geometric structure is either given in terms of the uniform rectifiability of the support of $\mu$, or in terms of the boundedness of a \textit{positive} Wolff potential of power type
$$\mathbb{W}_p(\mu)(x) = \int_0^{\infty}\Bigl(\frac{\mu(B(x,r))}{r^s}\Bigl)^p \frac{dr}{r},
$$
with $p\in (0,\infty)$.

In \cite{JN1}, we gave a new proof of the Mattila-Melnikov-Verdera theorem on the uniform rectifiability of an Ahlfors-David regular measure with bounded Cauchy transform operator, which relied on a description of the Ahlfors-David regular reflectionless measures associated to the Cauchy transform.

In Part II, a new quantitative version of a result of Eiderman, Nazarov and Volberg is proved.  This result states that, if $s\in (d-1,d)$, the boundedness of the $s$-Riesz transform of a non-atomic finite measure $\mu$ yields that $\int_{\mathbb{R}^d}\mathbb{W}_p(\mu)(x)\,d\mu(x)<\infty$ for some large $p\in(0,\infty)$.  The integrability of a power-type Wolff potential with $p=2$ was conjectured by Mateu, Prat, and Verdera \cite{MPV}, which would be sharp (see \cite{ENV2} where the reverse inequality was proved).   Unfortunately, our methods do not currently yield the sharp value of $p$.\\

\textbf{Part III} studies \textit{relaxed} CZOs, i.e., the operators with an abundance of reflectionless measures.   For a relaxed CZO $T$, we show that there exists a measure $\mu$  supported on a `wild' set for which the associated CZO transform $T_{\mu}$ exhibits nice behaviour.  Here the wildness of a set can be measured in terms of the set being unrectifiabile, or, more generally, having an irregular density function.

\subsection{The General Theory}  The general theory begins with a thorough development of the weak notion of a reflectionless measure.  The development may initially appear a little fastidious, but a careful exposition appears necessary as our results are proved by studying $T$ from two standpoints:\smallskip

--  as an operator assigning a potential $T\nu$, defined almost everywhere in $\mathbb{R}^d$ with respect to the $d$-dimensional Lebesgue measure, for any finite signed measure $\nu$,\smallskip

and,\smallskip

--  as an operator acting from a certain linear space of measures to certain space of distributions associated with the measure $\mu$.  \\

Beginning in Section \ref{nicesection}, we discuss the regularity properties of reflectionless measures and their associated potentials.  Our main results  are proved as consequences of a certain technical result called the \textit{Collapse Lemma} (Proposition \ref{collem}), which provides a link between the density of a reflectionless measure $\mu$ in a ball centred on the support of $\mu$ and the size of a certain modified potential $\Tbar_{\mu}(1)$ of $\mu$ in the same ball.  We mention here two consequences.

Assume that $\mu$ is a reflectionless  measure satisfying the growth condition $\mu(B(x,r))\leq r^s$ for any every ball $B(x,r)$.  Let $\eps>0$. \\

\textbf{1.  (Non-degeneracy)}  There exist $M=M(\eps)>0$ and $\tau=\tau(\eps)>0$  such that if $|\Tbar_{\mu}(1)(x)|>\eps$, then $\mu(B(x, Mr))\geq \tau r^s$.\\

\textbf{2.  (Porosity)}  There is a constant $c(\eps)>0$ such that whenever $\tfrac{1}{m_d(B(x,r))}\int_{B(x,r)} |\Tbar_{\mu}(1)|dm_d >\eps$ for a ball $B(x,r)$, there is a ball $B'\subset B(x,r)$ of radius $c(\eps)r$ that is disjoint from the support of $\mu$ (here $m_d$ denotes the $d$-dimensional Lebesgue measure).\\

In \cite{JN2}, we showed that the two dimensional Lebesgue measure restricted to a disk is a reflectionless measure for the CZO with the kernel $K(z)=\tfrac{\overline{z}}{z^2}$.  This example shows that one cannot expect a property like porosity to hold without an additional size assumption on the associated potential, at least in the generality in which we are working here.  The existence of this reflectionless measure was used in \cite{JN2} to construct a purely unrectifiable measure $\nu$ with respect to which the CZO with kernel $\tfrac{\overline{z}}{z^2}$ is a bounded operator in $L^2(\nu)$.  The general scheme behind constructions of this type will be presented in Part III.

The non-degeneracy property will form one of the main points behind our proof of the quantitative version of the Eiderman-Nazarov-Volberg theorem  in Part II.\\

We conclude the introduction by making several remarks about previous literature where objects similar to (what we call) reflectionless measures have been considered.  In spectral theory, a Jacobi matrix is called reflectionless on a set $E\subset \mathbb{R}$ is its Green function has zero imaginary part on $E$.  These matrices have proved to be useful in the study of the absolutely continuous spectrum of one dimensional discrete Schr\"{o}dinger operators, see Remling \cite{Rem} and Poltoratski-Remling \cite{PR}.  The description of finite measures $\nu$ on $\mathbb{C}$ for which the corresponding Cauchy transform $C(\nu)$ (understood in the principal value sense) vanishes $\nu$-almost everywhere has also attracted the attention of many authors.  The full characterization of them is still unknown, though partial progress has been made by Tolsa and Verdera \cite{TV}, and Melnikov, Poltoratski, and Volberg \cite{MPV}.

Finally, it would be remiss if we did not mention Mattila's notion of a symmetric measure.  These are measures $\nu$ for which $$\int_{B(x,r)}|x-y|^sK(x-y)d\nu(y)=0$$ for $\nu$-almost every $x\in \supp(\nu)$ and every $r>0$.  Symmetric measures naturally arise as a useful tool in the study of measures $\mu$ for which the potential $T(\mu)$ exists in the principal value sense $\mu$-almost everywhere.  This existence of principle values is stronger\footnote{Stronger in the sense that if  $\nu$ is finite and the CZO potential $T\nu$ exists in the principal value sense (or even if the maximal CZO potential is point-wise bounded $\nu$-almost everywhere), then for each $\eps>0$, there is a set $E$ with $\nu(E)\geq (1-\eps)\nu(\mathbb{R}^d)$, such that the CZO $T_{\nu'}$ is bounded in $L^2(\nu')$ with $\nu' = \chi_E\nu$.} than just the $L^2(\mu)$ boundedness of $T$ and the geometric properties of symmetric measures are understood better (see \cite{Mat2, MP}).  However, there are still many open problems regarding their structure, see Chapter 14 of \cite{Mat}.

\section{Notation}  Fix an integer $d\geq  2$ and a real number $s\in (0,d)$.

By a measure, we shall always mean a non-negative locally finite Borel measure.  We shall also make use of (real valued) signed measures, but these shall always be explicitly identified as such. For a (signed) measure $\mu$, $\supp(\mu)$ denotes its closed support.  The $d$-dimensional Lebesgue measure is denoted by $m_d$.

For a set $E$, and $\delta>0$, $E_{\delta}$ denotes the open $\delta$ neighbourhood of $E$.

Fix another integer $d'\geq 1$.  The integral kernels in this paper are $\mathbb{C}^{d'}$ valued.  This will be important in applications of this theory, but it causes a little bit of notational hassle that we now address.

For two scalar (complex) valued functions $f,g\in L^2(\mu)$, we define
$$\langle f,g\rangle_{\mu} = \int_{\mathbb{R}^d}  f  g \,\,d\mu
$$
(the reader should not be worried that there is no complex conjugation sign upon $g$).  In the event that one of the two functions (say $f$) is $\mathbb{C}^{d'}$ valued, we shall write $\langle f,g\rangle_{\mu}$ to mean the vector with components $\langle f_j,g\rangle_{\mu} $, where $f_j$ are the components of $f$.

For a function $f$ defined everywhere on $\R^d$, we define
$$\|f\|_{\infty} = \sup_{x\in \R^d}|f(x)|.$$

A function $f$ (either scalar or vector valued) is called Lipschitz continuous if
$$\|f\|_{\Lip} = \sup_{x,y\in \mathbb{R}^d,\; x\neq y} \frac{|f(x)-f(y)|}{|x-y|}<\infty.
$$
For $\alpha\in (0,1]$, $f$ is said to be $\alpha$-H\"{o}lder continuous if
$$\|f\|_{\Lip^{\alpha}} = \sup_{x,y\in \mathbb{R}^d,\; x\neq y} \frac{|f(x)-f(y)|}{|x-y|^{\alpha}}<\infty.
$$
In particular, $\|f\|_{\Lip^{1}} = \|f\|_{\Lip}$.

For a Borel set $U\subset \mathbb{R}^d$, we shall make use of a few spaces of functions:

-- $\Lip^{\alpha}(U)$ denotes the set of $\alpha$-H\"{o}lder continuous functions on $U$.

-- $\Lip_0(U)$ denotes the set of Lipschitz continuous functions that are \textit{compactly supported in the interior} of $U$.

-- The set $\Lip_{B}(U)$ denotes the set of \textit{bounded} Lipschitz functions on $U$.\\

Normally, we shall denote a large positive constant by $C$ and a small positive constant by $c$.  When new constants have to be defined in terms of some previously chosen ones (like in delicate iteration arguments in the second half of the paper), we number them.  The conventions are that all constants may depend on $d$, $s$, $\alpha$, $\|K\|_{*}$ and $\Lambda$ in addition to parameters explicitly mentioned in parentheses\footnote{Here $\alpha$ and $\|K\|_{*}$ will be parameters governing the regularity of a CZ kernel $K$, while $\Lambda$ shall denote a parameter governing a regularity property of the measure under consideration} and a numbered constant with index $j$ can be chosen in terms of constants with  indices less than $j$ (say, $C_{12}$ can be chosen in terms of $c_4$ and $C_{10}$).

\section{Calder\'{o}n-Zygmund operators of dimension $s$ and associated bilinear forms} \label{primer}

We study the properties of $T$ from two standpoints:  as properties of an operator assigning a potential, locally integrable with respect to $m_d$, to every signed finite Borel measure in $\mathbb{R}^d$, and as properties of an operator acting from some linear space of measures to an appropriate space of generalized functions associated with the measure $\mu$ via an extension of the standard duality $\langle \,\cdot\,,\, \cdot\,\rangle_{\mu}$.

For the purposes of this section, we shall call $K:\mathbb{R}^d\backslash \{0\} \to \mathbb{C}^{d'}$ an $s$-dimensional Calder\'{o}n-Zygmund (CZ) kernel if the following properties are satisfied\smallskip

(i). $|K(x)|\leq \tfrac{1}{|x|^s}$ for all $x\in \mathbb{R}^d\backslash \{0\}$.
\smallskip

(ii).  $K(-x) = -K(x)$ for all $x\in \mathbb{R}^d\backslash \{0\}$.\smallskip

(iii).  For some $\alpha\in (0,1]$, the $\alpha$-H\"{o}lder norm $\|K\|_{*}$ of the function $x \mapsto K(x)|x|^{s+\alpha}$ (extended so that it equals $0\in \mathbb{C}^{d'}$ at $0\in \mathbb{R}^d$) is finite.\smallskip

 It is straightforward to see that the third condition implies that, for every $x,x'\in \mathbb{R}^d\backslash\{0\}$ with $|x'|\leq |x|$, one has
\begin{equation}\label{standard}|K(x)-K(x')|\leq C\frac{|x-x'|^{\alpha}}{|x'|^{s+\alpha}},
\end{equation}
where the constant $C$ depends on $s$, $\alpha$ and $\|K\|_{*}$ only.

In later sections of this paper, it will be convenient to impose an additional requirement of homogeneity upon the kernel, but imposing this condition at this point is a nuisance.

Fix a CZ-kernel $K$.  We start with a useful inequality that will allow us to establish the basic properties of the potential $T\nu$ as an $L^1_{\text{loc}}(m_d)$-function.

\begin{lem}\label{locl1} There is a constant $C>0$, depending on $d$ and $s$, such that for any measure $\nu$,
$$\int_{B(x,r)}\int_{B(y,R)}\frac{1}{|z-y|^s} d\nu(z)dm_d(y) \leq C \min(r,R)^{d-s}\nu(B(x,r+R)),
$$
for any $x\in \mathbb{R}^d$, and $r,R>0$.
\end{lem}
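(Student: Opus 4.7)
The plan is to apply Fubini's theorem to swap the order of integration and then to reduce the resulting inner integral to the integral of $|y-z|^{-s}$ over a ball centered at $z$ by a symmetrization/rearrangement argument.

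Writing
$$I := \int_{B(x,r)}\int_{B(y,R)}\frac{1}{|z-y|^s}d\nu(z)dm_d(y) = \iint \mathbf{1}_{B(x,r)}(y)\mathbf{1}_{B(y,R)}(z)\frac{1}{|z-y|^s}d\nu(z)dm_d(y),$$
and noting that $\mathbf{1}_{B(y,R)}(z) = \mathbf{1}_{B(z,R)}(y)$ and that $y\in B(x,r)$, $z\in B(y,R)$ force $z\in B(x,r+R)$, Fubini gives
$$I = \int_{B(x,r+R)}\Bigl(\int_{B(x,r)\cap B(z,R)}|y-z|^{-s}dm_d(y)\Bigr)d\nu(z).$$

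Now I would bound the inner integral uniformly in $z$. The set $S := B(x,r)\cap B(z,R)$ satisfies $m_d(S) \leq c_d\min(r,R)^d$, since $S$ is contained in both $B(x,r)$ and $B(z,R)$. The function $y\mapsto |y-z|^{-s}$ is a radially decreasing function of $y-z$, so its integral over a set $S$ of prescribed Lebesgue measure is maximized when $S$ is replaced by a ball centered at $z$ of the same Lebesgue measure. Hence, taking $\rho := \min(r,R)$,
$$\int_{S}|y-z|^{-s}dm_d(y) \leq \int_{B(z,c_d^{-1/d}m_d(S)^{1/d})}|y-z|^{-s}dm_d(y)\leq \int_{B(z,\rho)}|y-z|^{-s}dm_d(y) = C\rho^{d-s},$$
for a constant $C = C(d,s)$ (using $s<d$ to ensure convergence near $z$).

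Inserting this bound into the expression for $I$ yields
$$I \leq C\min(r,R)^{d-s}\int_{B(x,r+R)}d\nu(z) = C\min(r,R)^{d-s}\nu(B(x,r+R)),$$
as desired. No real obstacle is expected; the only step deserving care is the rearrangement bound on $\int_S|y-z|^{-s}dm_d(y)$, which is a standard consequence of layer-cake / Hardy--Littlewood rearrangement applied to a radially decreasing kernel.
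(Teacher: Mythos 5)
Your proof is correct and follows essentially the same route as the paper: apply Fubini--Tonelli to rewrite the integral as $\int_{B(x,r+R)}\int_{B(x,r)\cap B(z,R)}|y-z|^{-s}\,dm_d(y)\,d\nu(z)$ and then bound the inner integral by $C\min(r,R)^{d-s}$. The paper simply asserts this last bound; you justify it via the Hardy--Littlewood rearrangement inequality, which is a valid (if slightly heavier-than-necessary) way to see it.
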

To prove the lemma, first note that by applying the Fubini-Tonelli theorem, the left hand side of the stated inequality equals $$\int_{B(x,r+R)}\int_{B(x,r)\cap B(z,R)}\frac{1}{|z-y|^s} dm_d(y)d\nu(z).$$  But for any $x,z\in \mathbb{R}^d$, $\int_{B(x,r)\cap B(z,R)}\frac{1}{|z-y|^s} dm_d(y)\leq C\min(r,R)^{d-s}$, and the lemma follows.

This lemma immediately implies that if $\nu$ is a finite signed measure, then $T\nu(\,\cdot\,)=\int_{\mathbb{R}^d}K(\cdot - y)d\nu(y)$ is defined $m_d$-almost everywhere as a Lebesgue integral and is locally integrable with respect to $m_d$.\\

We will now define $T\nu$ for a signed measure $\nu$ as a generalized function acting on test functions $\varphi\in \Lip_0(\mathbb{R}^d)$.  Fix a non-negative measure $\mu$.  Our goal is to make sense of the bilinear form $\langle T\nu, \varphi\rangle_{\mu}$ so that it coincides with $\int_{\R^d\times\R^d}K(x-y)\varphi(x)d\nu(y)\,d\mu(x)$ whenever the latter makes sense as a Lebesgue integral.   It may not be possible to do this for all finite signed Borel measures $\nu$ in general, so we shall restrict ourselves to some linear space of `decent' signed measures (where the exact meaning of the word `decent' will depend on the measure $\mu$).

In most arguments below, it will be convenient to have the test measures $\nu$ compactly supported, so we will include this condition into our definition of `decency' even if it does not seem immediately necessary.

\subsection{The space $\MBD(\mu)$}  With $\varphi\in \Lip_0(\mathbb{R}^d)$, we have no problem with the integral $\int_{\R^d\times\R^d}K(x-y)\varphi(x)d\nu(y)\,d\mu(x)$ if $\int_{\R}\tfrac{1}{|x-y|^s}d|\nu|(y)\in L^1_{\loc}(\mu)$, so we can declare any measure with this property `decent'.  Lemma \ref{locl1} shows that the linear space $\MBD(\mu)$ of such compactly supported signed measures is rich.  In particular, if $\mu$ is locally finite (which we always assume throughout the paper), then the Dirac mass $\delta_{x}\in \MBD(\mu)$ for $m_d$-almost every $x\in \R^d$.

\subsection{The space $\MS(\mu)$}  On the other hand, if $\nu=f\mu$  and if the map $(x,y)\mapsto K(x-y)f(y)\varphi(x)$  is Lebesgue integrable with resect to $\mu\times\mu$, then we can use the anti-symmetry of the kernel $K$ to write
\begin{equation}\begin{split}\nonumber\langle T\nu, \varphi\rangle_{\mu} & = \iint_{\R^d\times\R^d}\!\!\!K(x-y)f(y)\varphi(x)\,d\mu(x)\,d\mu(y)\\
&= \iint_{\R^d\times\R^d}\!\!\!K(x-y)H_{f,\varphi}(x,y)\,d\mu(x)\,d\mu(y),
\end{split}\end{equation}
where $H_{f,\varphi}(x,y) = \frac{1}{2}\bigl[f(y)\varphi(x) - f(x)\varphi(y)\bigl]$.

If $f,\varphi\in \Lip_0(\mathbb{R}^d)$, then $H_{f,\varphi}\in \Lip_0(\R^d\times \R^d)$ and $H_{f,\varphi}(x,x)=0$ for $x\in \mathbb{R}^d$.  Thus $|H_{f,\varphi}(x,y)|\leq C_{f,\varphi}|x-y|$.  This decay near the diagonal can potentially cancel the singularity in the kernel $K$ so that the function $(x,y)\mapsto K(x-y)H_{f,\varphi}(x,y)$ may belong to $L^1(\mu\times\mu)$ even if $K(x,y)f(y)\varphi(x)$ does not.  For instance, this happens if $(x,y)\mapsto \tfrac{1}{|x-y|^{s-1}}$ is locally integrable with respect to $\mu\times\mu$.

\begin{defn}\label{pleasdef} A measure $\mu$ is \textit{diffuse} if  the function $(x,y)\mapsto \tfrac{1}{|x-y|^{s-1}}$ is locally integrable with respect to $\mu\times\mu$.  \end{defn}

If $\mu$ is $\pleas$, it is natural to define the linear space of `smooth' signed measures $\nu$ by  $\MS(\mu) = \bigl\{ f\mu : f\in \Lip_0(\R^d)\bigl\}.$ We now put
$$\MD(\mu) = \MBD(\mu)+\MS(\mu),
$$
with the bilinear form $\langle T\nu, \varphi\rangle_{\mu}$ defined as
\begin{equation}\begin{split}\nonumber\iint_{\R^d\times\R^d} &K(x-y)\varphi(x)d\nu_{\operatorname{bdd}}(y)\,d\mu(x) \\
&+ \iint_{\R^d\times\R^d}K(x-y)H_{f,\varphi}(x,y)\,d\mu(x)\,d\mu(y),
\end{split}\end{equation}
for $\nu = \nu_{\operatorname{bdd}}+f\mu\in \MD(\mu)$.\smallskip

The discussion above shows that the definition is consistent in the sense that it does not depend on the representation of $\nu$ as a sum of its bounded ($\nu_{\operatorname{bdd}}$) and smooth  ($f\mu$) parts, and, moreover, we can use the non-symmetrised version of the formula instead of the symmetrised one whenever the corresponding Lebesgue integral makes sense.  Let us explicitly remark here that any signed measure $\nu\in \MD(\mu)$ is compactly supported, and that the space $\MD(\mu)$ does not depend on the choice of a CZ-kernel.

\subsection{Balanced measures $\BMD(\mu)$} We call a finite signed measure $\nu$ balanced if $\nu(\mathbb{R}^d)=0$.  By $\BMD(\mu)$, we denote the linear space of balanced measures in $\MD(\mu)$.

Note that if $\nu \in \BMD(\mu)$, then for sufficiently large $x\in \R^d$ (namely for all points $x$ for which $\supp(\nu)\subset B(0, \tfrac{|x|}{2})$\,), the value $T\nu(x) = \int_{\R^d}K(x-y)\,d\nu(y)$ is well defined and we can write
$$|T\nu(x)| =\Bigl|\int_{\R^d} \bigl[K(x-y) - K(x)\bigl]d\nu(y)\Bigl|\leq \frac{C}{|x|^{s+\alpha}}\int_{\mathbb{R}^d}|y|^{\alpha}d|\nu|(y).
$$
Thus, if the measure $\mu$ has \textit{restricted growth at infinity} in the sense that $\int_{|x|\geq 1} \tfrac{1}{|x|^{s+\alpha}}\,d\mu(x)<\infty$ (we shall always make this assumption from now on), we can couple $T\nu$ with \textit{bounded Lipschitz functions} $\varphi\in \Lip_B(\mathbb{R}^d)$ rather than just compactly supported ones by defining the corresponding bilinear form $\langle T\nu, \varphi\rangle_{\mu}$ as the limit $$\langle T\nu,\varphi\rangle_{\mu}=\lim_{k\to \infty}\langle T\nu, \psi_k\varphi\rangle_{\mu}$$ for any sequence $\psi_k$ of $\Lip_0(\R^d)$ functions satisfying $0\leq \psi_k\leq 1$ on $\R^d$ and $\psi_k \equiv 1$ on $B(0,k)$, say.  Alternatively, we can write this limit as
$$\langle T\nu,\varphi\rangle_{\mu}=\langle T\nu, \psi\varphi\rangle_{\mu}+\int_{\mathbb{R}^d} T\nu(x)[1-\psi(x)]\varphi(x)\,d\mu(x),
$$
where $\psi\in\Lip_0(\mathbb{R}^d)$ is identically $1$ on some neighbourhood of $\supp(\nu)$.

\subsection{Compactly supported kernels}\label{cmptsupp}  There is another obvious case when $\langle T\nu, \varphi\rangle_{\mu}$ makes sense for every $\nu\in \MD(\mu)$ and $\varphi\in \Lip_B(\mathbb{R}^d)$, namely, the case when the kernel $K$ is compactly supported.  It is easy to check that in this case the sequence $\langle T\nu, \psi_k\varphi\rangle_{\mu}$ stabilizes eventually, and $\langle T\nu, \varphi \rangle_{\mu}$ can be defined as $\langle T\nu, \psi\varphi \rangle_{\mu}$ for any $\Lip_0(\mathbb{R}^d)$-function $\psi$ that is identically $1$ on a sufficiently large ball centred at the origin.  In particular, if $\nu=f\mu$, $f\in \Lip_0(\R^d)$, then for $\varphi\in \Lip_{B}(\R^d)$,
$$\langle T\nu, \varphi\rangle_{\mu} = \iint_{\R^d\times\R^d}K(x-y)H_{f,\varphi}(x,y)d\mu(x)d\mu(y).  $$

\subsection{The distribution $\T1\nu$}  Even if the measure $\nu \in \MD(\mu)$ is not balanced, we can balance it by subtracting its total mass times some fixed probability measure $\nu_0\in \MD(\mu)$.  For $\nu_0$, we can take any Dirac point mass $\delta_x\in \MBD(\mu)$, like it was done in \cite{JN1}.  However, for the purposes of the present exposition, it will be more convenient to choose $\nu_0 = \eta \mu\in \MS(\mu)$, with some $\eta\in \Lip_0(\mathbb{R}^d)$ satisfying $\int_{\mathbb{R}^d}\eta \,d\mu=1$ (such a function can be found unless $\mu\equiv 0$, in which case all our constructions trivialize to rewriting the identity $0=0$ in various forms).

So, we just define the bilinear form
$$\langle \T1\nu, \varphi\rangle_{\mu} =\langle T[\nu-\nu(\mathbb{R}^d)\nu_0], \varphi \rangle_{\mu}
$$
(here $\nu\in \MD(\mu)$ and $\varphi\in \Lip_B(\mathbb{R}^d)$).

Note that in the case of a compactly supported kernel $K$, we can write
\begin{equation}\label{compactsplit}\langle \T1\nu, \varphi\rangle_{\mu}=\langle T\nu, \varphi\rangle_{\mu} - \nu(\mathbb{R}^d)\langle T\nu_0, \varphi\rangle_{\mu}.\end{equation}

\subsection{The regularized kernel $K_{\delta}$ and the localized kernel $K^{\delta}$}

For $x\in \mathbb{R}^d$, $x\neq 0$, define the  kernels $$K_{\delta}(x) = K(x)\Bigl(\frac{|x|}{\max(\delta, |x|)}\Bigl)^{s+\alpha}, \text{ and }K^{\delta}(x) = K(x)-K_{\delta}(x).$$
The domain of the kernel $K_{\delta}$ is then extended to the entire space by setting its value at the origin to be $0$.

We first claim that $K_{\delta}$ (and, hence, $K^{\delta}$) is an $s$-dimensional CZ-kernel\footnote{It is for this reason why homogeneity is not made part of the definition of a CZ kernel in this section.} and, moreover, its Calder\'{o}n-Zygmund bounds are controlled by a constant independent of $\delta$.

This is clear for the growth bound $|K_{\delta}(x)|\leq \tfrac{1}{|x|^s}$.   To estimate the H\"{o}lder norm of $x\mapsto |x|^{s+\alpha}K_{\delta}(x)$, we shall appeal to a simple lemma.

\begin{lem}Suppose that $f\in \Lip^{\alpha}(\R^d)$, $f(0)=0$, and $\|f\|_{\Lip^{\alpha}}\leq 1$, and for some $\delta>0$, $g_{\delta}\in \Lip^{\alpha}(\R^d)$, $\supp(g_{\delta})\subset \overline{B(0,\delta)}$, and $\|g_{\delta}\|_{\Lip^{\alpha}}\leq \tfrac{A}{\delta^{\alpha}}$ for some $A>0$.  Then $h_{\delta}=fg_{\delta}\in \Lip^{\alpha}(\R^d)$, and moreover $\|h_{\delta}\|_{\Lip^{\alpha}}\leq 2A$.\end{lem}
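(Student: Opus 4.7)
The plan is to estimate $h_\delta(x)-h_\delta(y)$ directly from the product rule identity
$$h_\delta(x)-h_\delta(y) \;=\; f(x)\bigl[g_\delta(x)-g_\delta(y)\bigl] \;+\; g_\delta(y)\bigl[f(x)-f(y)\bigl],$$
after establishing two pointwise facts. First, since $f(0)=0$ and $\|f\|_{\Lip^\alpha}\leq 1$, we have $|f(x)|\leq |x|^\alpha$ everywhere; in particular $|f(x)|\leq \delta^\alpha$ whenever $x\in \overline{B(0,\delta)}$. Second, $g_\delta$ is continuous (being $\alpha$-Hölder), so the support condition forces $g_\delta$ to vanish on $\partial B(0,\delta)$, whence for any $y\in\overline{B(0,\delta)}$ one may compare $g_\delta(y)$ to its value at the radial projection onto $\partial B(0,\delta)$ to obtain the sup bound $\|g_\delta\|_\infty\leq A$.

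With these two ingredients I would then split into three cases according to the location of the pair $(x,y)$ relative to $\overline{B(0,\delta)}$. In the interior case $x,y\in\overline{B(0,\delta)}$, the identity above gives
$$|h_\delta(x)-h_\delta(y)|\leq \delta^\alpha\cdot\tfrac{A}{\delta^\alpha}|x-y|^\alpha + A\cdot |x-y|^\alpha = 2A|x-y|^\alpha.$$
In the exterior case $x,y\notin\overline{B(0,\delta)}$, both values of $g_\delta$ (hence $h_\delta$) vanish and there is nothing to show. In the mixed case, say $x\in\overline{B(0,\delta)}$ and $y\notin \overline{B(0,\delta)}$, we have $h_\delta(y)=g_\delta(y)=0$, so
$$|h_\delta(x)-h_\delta(y)|=|f(x)|\,|g_\delta(x)-g_\delta(y)|\leq \delta^\alpha\cdot \tfrac{A}{\delta^\alpha}|x-y|^\alpha = A|x-y|^\alpha,$$
and the result follows in all three cases.

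The argument is essentially routine once one notices the two key pointwise bounds, so I do not anticipate a substantive obstacle; the only mildly subtle point is deriving $\|g_\delta\|_\infty\leq A$ from the Hölder norm alone, which uses that $g_\delta$ must vanish on the boundary of its support ball by continuity. The case analysis is needed only to avoid having a factor $|y|^\alpha/\delta^\alpha$ (which is unbounded) appear when $|y|>\delta$; restricting $f$ by the support of $g_\delta$ circumvents this issue cleanly.
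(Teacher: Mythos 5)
Your proof is correct and follows essentially the same route as the paper's: the same product-rule split of $h_\delta(x)-h_\delta(y)$, the same two pointwise bounds ($|f|\leq\delta^\alpha$ on $\overline{B(0,\delta)}$ and $\|g_\delta\|_\infty\leq A$ obtained by comparing $g_\delta$ to its vanishing boundary values), and the same case analysis. The only cosmetic difference is in the mixed case, where you observe directly that $g_\delta(y)=0$ kills one term, whereas the paper replaces $y$ by the radial point $y^*\in\partial B(0,\delta)$ on the segment $[x,y]$ and reduces to the all-inside case; both are immediate and yield the same constant.
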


\begin{proof}Fix $x,y\in \R^d$.  The estimation of $|h_{\delta}(x)-h_{\delta}(y)|$ is trivial unless either $x$ or $y$ in $B(0,\delta)$.  If both $x,y\in \overline{B(0,\delta)}$, then we use the triangle inequality to write
$$|h_{\delta}(x)-h_{\delta}(y)|\leq |g_{\delta}(x)||f(x)-f(y)|+|f(y)||g_{\delta}(x)-g_{\delta}(y)|.
$$
Choosing $z\not\in B(0,\delta)$ with $|z-y|\leq \delta$, we see that $|g_{\delta}(y)|=|g_{\delta}(y)-g_{\delta}(z)|\leq \tfrac{A}{\delta^{\alpha}}|y-z|^{\alpha}\leq A$, while $|f(y)|=|f(y)-f(0)|\leq |y|^{\alpha}\leq \delta^{\alpha}$.  Thus $|h_{\delta}(x)-h_{\delta}(y)|\leq 2A|x-y|^{\alpha}$.  It remains to consider the case where one of the points, say $x$, lies in $B(0,\delta)$, while $y\not\in B(0,\delta)$.  Denote by $y^*$ the point on the line segment $[x,y]$ with $|y^*|=\delta$.  Then $|x-y^*|\leq |x-y|$.  Since $g_{\delta}$ is supported in $\overline{B(0,\delta)}$, $|h_{\delta}(x)-h_{\delta}(y)| =|h_{\delta}(x)-h_{\delta}(y^*)|$.  But since $x,y^*\in \overline{B(0,\delta)}$, the previously considered case yields that $|h_{\delta}(x)-h_{\delta}(y^*)|\leq 2A|x-y^*|^{\alpha}\leq 2A|x-y|^{\alpha}$.  The lemma is proved.\end{proof}

We shall apply this lemma with the functions $f(x)=\tfrac{K(x)|x|^{s+\alpha}}{\|K\|_{*}}$, and $g_{\delta}(x) = 1-\bigl(\frac{|x|}{\max(\delta, |x|)}\bigl)^{s+\alpha}$.  Then $\|f\|_{\Lip^{\alpha}}\leq 1$, and so the triangle inequality yields that $\|f(1-g_{\delta})\|_{\Lip^{\alpha}(\R^d)}\leq 1+ \|fg_{\delta}\|_{\Lip^{\alpha}(\R^d)}$.  Thus, in order to prove that the H\"{o}lder norm of the function $x\mapsto |x|^{s+\alpha}K_{\delta}(x)[=\|K\|_{*}f(x)(1-g_{\delta}(x))]$ can be estimated independently of $\delta$, it suffices to show that there is some $A>0$ (independent of $\delta$) so that $\|g_{\delta}\|_{\Lip^{\alpha}}\leq\tfrac{A}{\delta^{\alpha}}$.    But note that $g_{\delta}(\cdot) = g(\tfrac{\cdot}{\delta})$, where $g(x)= 1-\bigl(\frac{|\cdot|}{\max(1, |\cdot|)}\bigl)^{s+\alpha} = \min (1,|x|)^{s+\alpha}$, so we only need to check that $g\in \Lip^{\alpha}(\R^d)$.

To confirm the required H\"{o}lder continuity of $g$,  first note that the function $x\mapsto |x|^{\alpha}$ is $\alpha$-H\"{o}lder continuous on $\R^d$ (with $\alpha$-H\"{o}lder norm equal to $1$).   Second, the function $t\mapsto \min(1,t)$ is Lipschitz continuous on $\R$ (with Lipschitz norm equal to $1$).  Consequently, $x\mapsto \min(1,|x|^{\alpha})\in \Lip^{\alpha}(\R^d)$ (with $\alpha$-H\"{o}lder norm bounded by $1$), and has its values in the interval $[0,1]$.   Finally, the function $t\mapsto t^{(s+\alpha)/\alpha}$ lies in $\Lip([0,1])$ (with Lipschitz norm on $[0,1]$ at most $\tfrac{s+\alpha}{\alpha}$).  Thus, $g$ lies in $\Lip^{\alpha}(\R^d)$, and its $\alpha$-H\"{o}lder norm is no greater than $\tfrac{s+\alpha}{\alpha}$.

\subsection{The regular operator $T_{\delta}$ and the local operator $T^{\delta}$}  We can now apply all the above constructions to the kernels $K_{\delta}$ and $K^{\delta}$ instead of $K$, and define the corresponding operators $T_{\delta}$, $T^{\delta}$, $\T1_{\delta}$ and $\T1^{\delta}$.  Since $K=K_{\delta}+K^{\delta}$, we have the identities
\begin{equation}\label{lineardecent}T=T_{\delta}+T^{\delta}\text{ and }\T1 = \T1_{\delta}+\T1^{\delta}\text{ on }\MD(\mu).\end{equation}

The operator $T^{\delta}$ is $\delta$-localized in the sense that the corresponding kernel $K^{\delta}$ is supported on a small ball (the closed ball of radius $\delta$ centred at the origin), so the bilinear form $\langle T^{\delta}(\nu), \varphi\rangle_{\mu}$ makes sense for every $\nu\in \MD(\mu)$, and every Lipschitz function $\varphi$ (even the boundedness of $\varphi$ is not necessary).   Moreover, the corresponding value of the bilinear form depends only on the values of $\varphi$ in the $\delta$-neighbourhood of $\supp(\nu)$. In particular,
\begin{equation}\begin{split}\label{localintest}\int_{B(x,r)}\!\!|\langle T^{\delta}[\delta_y], \varphi\rangle_{\mu}|\, dm_d(y)&\leq \int_{B(x,r)}\int_{B(y,\delta)}\!\frac{|\varphi(z)|}{|y-z|^s}\,d\mu(z)dm_d(y)\\
&\leq C\min(r,\delta)^{d-s}\mu(B(x,r+\delta)),
\end{split}\end{equation}
where Lemma \ref{locl1} has been used in the second inequality.

The operator $T_{\delta}$, on the other hand, has  bounded continuous kernel, so $T_{\delta}\nu = \int_{\mathbb{R}^d} K_{\delta}(\cdot-y)d\nu(y)$ is defined as a continuous function in $\R^d$ for \textit{every} finite signed Borel measure $\nu$, not only for $\nu\in \MD$.   Moreover, when $\varphi\in \Lip_0(\R^d)$,  the integral
$$\iint_{\R^d\times\R^d}K_{\delta}(x-y)\varphi(x)d\nu(y)\,d\mu(x)$$ makes sense as a usual Lebesgue integral, and, thereby, represents the bilinear form $\langle T_{\delta}\nu, \varphi\rangle_{\mu}$.  Applying the Fubini theorem, we conclude that this bilinear form can also be written as $\int_{\R^d}G_{\varphi,\delta}(y)d\nu(y)$, where $G_{\varphi,\delta}(y) = \int_{\R^d}K_{\delta}(x-y)\varphi(x)\,d\mu(x)=\langle T_{\delta}\delta_y, \varphi\rangle_{\mu}$ is a bounded continuous function in $\R^d$.  Now for $\varphi\in \Lip_0(\R^d)$, we have
\begin{equation}\begin{split}\nonumber\langle \T1_{\delta}\nu, \varphi\rangle_{\mu} & = \langle T_{\delta}(\nu-\nu(\R^d)\nu_0), \varphi\rangle_{\mu} = \int_{\R^d}G_{\varphi,\delta}(y)d[\nu-\nu(\R^d)\nu_0](y)\\
& = \iint_{\R^d\times\R^d}\bigl[G_{\varphi,\delta}(y) - G_{\varphi,\delta}(y')\bigl]d\nu_0(y')d\nu(y).
\end{split}\end{equation}
If  $\varphi\in \Lip_B(\R^d)$ is merely bounded, we can still write
\begin{equation}\begin{split}\nonumber \langle \T1_{\delta}\nu, \varphi\rangle_{\mu}&=\lim_{k\to \infty}\langle \T1_{\delta}\nu, \varphi\psi_k\rangle_{\mu}\\
&=\lim_{k\to \infty}\iint_{\R^d\times \R^d}\bigl[G_{\varphi\psi_k,\delta}(y) - G_{\varphi\psi_k,\delta}(y')\bigl]d\nu_0(y')d\nu(y).
\end{split}\end{equation}
Now note that despite the fact that $G_{\varphi\psi_k,\delta}(y)$ and $G_{\varphi\psi_k,\delta}(y')$ may fail to tend to a limit as $k\to \infty$, their difference
$$G_{\varphi\psi_k,\delta}(y) - G_{\varphi\psi_k,\delta}(y')=\int_{\R^d}\bigl[K_{\delta}(x-y)-K_{\delta}(x-y')\bigl]\varphi(x)\psi_k(x)\,d\mu(x)
$$
always tends to $\int_{\R^d}\bigl[K_{\delta}(x-y)-K_{\delta}(x-y')\bigl]\varphi(x)\,d\mu(x)$ uniformly on compact subsets of $\R^d\times\mathbb{R}^d$ due to the estimate
\begin{equation}\label{Kdeltest}|K_{\delta}(x-y)-K_{\delta}(x-y')|\leq C\min\Bigl(\frac{1}{\delta^s}, \frac{|y-y'|^{\alpha}}{\min(|x-y|, |x-y'|)^{s+\alpha}}\Bigl),
\end{equation}
and the restricted growth assumption $\int_{|x|\geq 1}\tfrac{1}{|x|^{s+\alpha}}\,d\mu(x)<\infty$.

Thus, for every $\varphi\in \Lip_B(\R^d)$,
\begin{equation}\label{deltamuint}\langle \T1_{\delta}\nu, \varphi\rangle_{\mu} = \int_{\R^d}\widetilde{G}_{\varphi,\delta}(y) d\nu(y),\end{equation} where \begin{equation}\begin{split}\label{tildgdef}\widetilde{G}_{\varphi,\delta}(y) &= \iint_{\R^d\times\R^d}\bigl[K_{\delta}(x-y)-K_{\delta}(x-y')\bigl]\varphi(x)\,d\mu(x)d\nu_0(y') .\end{split}\end{equation}
Note that $\widetilde{G}_{\varphi,\delta}(y)$ is a continuous function.

We shall make use of the following identiy for the function $\widetilde{G}_{\varphi,\delta}$. For two points $y,z\in \mathbb{R}^d$, and $0<\delta\leq \Delta$,
\begin{equation}\begin{split}\label{deltaDelta}
\widetilde{G}_{\varphi,\delta}(y) - \widetilde{G}_{\varphi,\Delta}(z) = &\int_{\mathbb{R}^d}[K_{\delta}(x-y)-K_{\Delta}(x-z)]\varphi(x)d\mu(x)\\&+ \langle T^{\delta}\nu_0, \varphi\rangle_{\mu}- \langle T^{\Delta}\nu_0, \varphi\rangle_{\mu}.
\end{split}\end{equation}

To prove this identity, fix $y,z\in \mathbb{R}^d$ and $0<\delta\leq \Delta$.  Since each of the following integrals converges absolutely, the difference
\begin{equation}\begin{split}\nonumber\iint_{\R^d\times\R^d}\bigl[&K_{\delta}(x-y)-K_{\delta}(x-y')\bigl]\varphi(x)\,d\mu(x)d\nu_0(y')\\&-\iint_{\R^d\times\R^d}\bigl[K_{\Delta}(x-z)-K_{\Delta}(x-y')\bigl]\varphi(x)\,d\mu(x)d\nu_0(y')
\end{split}\end{equation}
equals
\begin{equation}\begin{split}\label{anotherdiff}&\iint_{\mathbb{R}^d\times\R^d}[K_{\delta}(x-y)-K_{\Delta}(x-z)]\varphi(x)d\mu(x)d\nu_0(y') \\
&\;\;- \iint_{\mathbb{R}^d\times \R^d}[K_{\delta}(x-y')-K_{\Delta}(x-y')]\varphi(x)d\mu(x)d\nu_0(y').
\end{split}\end{equation}
But $\nu_0$ is a probability measure, so the first of the two integrals appearing in (\ref{anotherdiff}) equals $\int_{\mathbb{R}^d}[K_{\delta}(x-y)-K_{\Delta}(x-z)]\varphi(x)d\mu(x)$.  

Let us now examine the second integral appearing in (\ref{anotherdiff}):
$$\iint_{\mathbb{R}^d\times \R^d}[K_{\delta}(x-y')-K_{\Delta}(x-y')]\varphi(x)d\mu(x)\eta(y')d\mu(y').
$$
Notice that the CZ-kernel $K_{\delta}-K^{\Delta}$ is compactly supported, and so we may use antisymmetry to rewrite this integral as
\begin{equation}\label{someintegral}\iint_{\mathbb{R}^d\times \R^d}[K_{\delta}(x-y')-K_{\Delta}(x-y')]H_{\eta,\varphi}(x,y')d\mu(x)d\mu(y'),
\end{equation}
(even though $\varphi\in \Lip_B(\R^d)$, see Section \ref{cmptsupp}).  Since $\mu$ is diffuse, the set $\{(x,y')\in \R^d\times\R^d: x=y'\}$ is a set of $\mu\times\mu$-measure zero.  Thus $K_{\delta}(x-y')-K_{\Delta}(x-y')=K^{\Delta}(x-y')-K^{\delta}(x-y')$ for $\mu\times\mu$ almost every $(x,y')$.  Consequently, the integral (\ref{someintegral}) equals 
$\langle T^{\Delta}(\eta\mu),\varphi\rangle_{\mu}- \langle T^{\delta}(\eta\mu),\varphi\rangle_{\mu}.$
Recalling again that $\nu_0=\eta\mu$, we find that the identity (\ref{deltaDelta}) has been proved.

\subsection{The functions $\Tbar_{\mu, \delta}(\varphi)$ and $\Tbar_{\mu}(1)$}

\begin{defn}For a bounded Lipschitz function $\varphi$, define \begin{equation}\label{Tbardeltdef}\Tbar_{\mu, \delta}(\varphi) = \widetilde{G}_{\varphi, \delta} -\langle T^{\delta}\nu_0, \varphi\rangle_{\mu}.\end{equation}
\end{defn}

Note that $\int_{\R^d} \Tbar_{\mu, \delta}(\varphi) d\nu \stackrel{(\ref{deltamuint})}{=} \langle \T1_{\delta} \nu, \varphi\rangle_{\mu} - \nu(\R^d)\langle T^{\delta}\nu_0, \varphi\rangle_{\mu}$, which, for $\nu\in \MD(\mu)$, can also be written as
\begin{equation}\label{bardecentrep}\begin{split}\int_{\R^d} \Tbar_{\mu, \delta}(\varphi) d\nu &\stackrel{(\ref{lineardecent})}{=} \langle \T1\nu, \varphi\rangle_{\mu}-\langle \T1^{\delta}\nu, \varphi\rangle_{\mu} -\nu(\mathbb{R}^d)\langle T^{\delta}(\nu_0),\varphi\rangle_{\mu}\\
& \stackrel{(\ref{compactsplit})}{=} \langle \T1\nu, \varphi\rangle_{\mu} - \langle T^{\delta}\nu, \varphi\rangle_{\mu}.
\end{split}\end{equation}

\begin{defn}For $\varphi\in \Lip_B(\R^d)$, and $x$ so that $\delta_x\in \MBD(\mu)$, define
\begin{equation}\label{prodrep}\Tbar_{\mu}(\varphi)(x) = \langle \T1\delta_x,\varphi\rangle_{\mu}.
\end{equation}\end{defn}
\vspace{0.1in}
While $\Tbar_{\mu}(\varphi)(x)$ is not defined for every $x\in \mathbb{R}^d$, it is a well defined Lebesgue measurable function with respect to $m_d$.   Notice that whenever $\delta_x\in \MBD(\mu)$, and $\delta>0$, we may apply the formula (\ref{bardecentrep}) with $\nu = \delta_x$ to yield
\begin{equation}\label{deltprodrep}\Tbar_{\mu,\delta}(\varphi)(x) = \langle \T1\delta_x,\varphi\rangle_{\mu} - \langle T^{\delta}[\delta_x],\varphi\rangle_{\mu},\end{equation}
and so
\begin{equation}\label{deltprodrep2}\Tbar_{\mu}(\varphi)(x) = \Tbar_{\mu, \delta}(\varphi)(x) +\langle T^{\delta}[\delta_x],\varphi\rangle_{\mu}.\end{equation}

\subsection{Useful Identities} We conclude this section of the paper by collecting some useful identities for the functions $\Tbar_{\mu,\delta}(1)$ and $\Tbar_{\mu}(1)$.

$\bullet$ For any $x,x'\in \mathbb{R}^d$, there is a useful identity for the difference $\Tbar_{\mu, \delta}(\varphi)(x)-\Tbar_{\mu, \delta}(\varphi)(x')$,
\begin{equation}\begin{split}\label{deltdiffform}\Tbar_{\mu, \delta}(\varphi)(x)-&\Tbar_{\mu, \delta}(\varphi)(x')=\widetilde{G}_{\varphi, \delta}(x)-\widetilde{G}_{\varphi, \delta}(x') \\
&\stackrel{(\ref{deltaDelta})}{=}  \int_{\mathbb{R}^d}[K_{\delta}(y-x)-K_{\delta}(y-x')]\varphi(y)\,d\mu(y).
\end{split}\end{equation}\smallskip

$\bullet$ Our next observation is that if $x,x'$ are points for which $\delta_{x},\delta_{x'}\in \MBD(\mu)$, then
\begin{equation}\label{diffform}\Tbar_{\mu}(\varphi)(x) - \Tbar_{\mu}(\varphi)(x') = \int_{\mathbb{R}^d}[K(y-x)-K(y-x')]\varphi(y)\,d\mu(y).
\end{equation}
(And so this formula holds for $m_d$-almost every $x$ and $x'$ in $\mathbb{R}^d$.)  To derive (\ref{diffform}), first recall that $\delta_x\in \MBD(\mu)$ means that $\tfrac{1}{|x-\cdot|^s}\in L^1_{\loc}(\mu)$, so the representation  (\ref{deltprodrep2}) yields that for any $\delta>0$,
\begin{equation}\begin{split}\nonumber\Tbar_{\mu}(\varphi)(x)- \Tbar_{\mu}(\varphi)(x') = \;&\Tbar_{\mu, \delta}(\varphi)(x) -\Tbar_{\mu, \delta}(\varphi)(x')\\
&+\int_{\R^d}[K^{\delta}(y-x)-K^{\delta}(y-x')]\varphi(y)d\mu(y).
\end{split}\end{equation}
But then (\ref{diffform}) follows from (\ref{deltdiffform}).\smallskip

$\bullet$ Now let $\varphi\in \Lip_B(\R^d)$, and $0<\delta\leq \Delta$.  On several occasions, we shall need to estimate the difference
$$F_{\delta,\Delta}(x) = \Tbar_{\mu,\delta}(\varphi)(x)-\Tbar_{\mu,\Delta}(\varphi)(x)$$
for $x\in \R^d$.  First note that due to the identity (\ref{deltaDelta}), we may write
\begin{equation}\begin{split}\label{Fdelt}
F_{\delta,\Delta}(x) &= \int_{\R^d}[K_{\delta}(y-x)-K_{\Delta}(y-x)]\varphi(y)d\mu(y)\\
& = \int_{B(x,\Delta)}[K_{\delta}(y-x)-K_{\Delta}(y-x)]\varphi(y)d\mu(y).
\end{split}\end{equation}
where the second equality follows from the first since the integrand vanishes if $|y-x|\geq \Delta$.

We can use this indentity to bound $F_{\delta,\Delta}(x)$ in absolute value:
\begin{equation}\label{Fabsval}
|F_{\delta,\Delta}(x)|\leq \int_{B(x,\Delta)}\frac{2|\varphi(y)|}{(\delta+|x-y|)^s}d\mu(y).
\end{equation}

The following lemma provides us with a simple but useful estimate for the function $F_{\delta,\Delta}$.

\begin{lem}\label{twodiffs} There is a constant $C>0$, depending on $s$ and $d$, such that for any $r>0$,
$$\int_{B(0,r)}\sup_{\delta\in (0,\Delta)}|F_{\delta,\Delta}(x)|dm_d(x)\leq C\|\varphi\|_{\infty}\min(\Delta,r)^{d-s}\mu(B(0,r+\Delta)).
$$
\end{lem}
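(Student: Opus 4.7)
The plan is to reduce the statement to the already-established pointwise bound (\ref{Fabsval}) and then invoke Lemma \ref{locl1} directly.

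The first step is to observe that the right-hand side of
$$|F_{\delta,\Delta}(x)|\leq \int_{B(x,\Delta)}\frac{2|\varphi(y)|}{(\delta+|x-y|)^s}\,d\mu(y)$$
is monotone non-increasing in $\delta$: shrinking $\delta$ only enlarges the integrand $(\delta+|x-y|)^{-s}$, while the domain of integration $B(x,\Delta)$ is fixed. Taking the supremum over $\delta\in(0,\Delta)$ therefore yields the clean pointwise bound
$$\sup_{\delta\in(0,\Delta)}|F_{\delta,\Delta}(x)|\leq 2\|\varphi\|_{\infty}\int_{B(x,\Delta)}\frac{1}{|x-y|^s}\,d\mu(y).$$

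The second step is simply to integrate this bound over $x\in B(0,r)$ and recognize the resulting double integral as the one treated in Lemma \ref{locl1}. Applying that lemma with center at the origin and radii $r$ and $\Delta$ bounds the double integral by $C\min(r,\Delta)^{d-s}\mu(B(0,r+\Delta))$, yielding the desired estimate.

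There is essentially no obstacle to this argument; the only subtlety worth a sentence is measurability of $x\mapsto\sup_{\delta\in(0,\Delta)}|F_{\delta,\Delta}(x)|$. By the explicit formula (\ref{Fdelt}), the map $\delta\mapsto F_{\delta,\Delta}(x)$ is continuous, so the supremum over the open interval $(0,\Delta)$ equals the supremum over a countable dense subset and is therefore Borel measurable. Alternatively, since the bound on $|F_{\delta,\Delta}(x)|$ is monotone, one may replace $\sup$ by a limit as $\delta\downarrow 0$ along a sequence from the outset.
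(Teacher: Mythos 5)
Your proof is correct and follows exactly the paper's own argument: bound $|F_{\delta,\Delta}(x)|$ via (\ref{Fabsval}) by $2\|\varphi\|_{\infty}\int_{B(x,\Delta)}|x-y|^{-s}\,d\mu(y)$ (the paper simply uses $(\delta+|x-y|)^{-s}\le|x-y|^{-s}$ rather than noting the monotonicity in $\delta$, but that is an immaterial difference), then integrate over $B(0,r)$ and apply Lemma \ref{locl1}.
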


To prove the inequality, note that by (\ref{Fabsval}), we have
\begin{equation}\begin{split}\nonumber\int_{B(0,r)}&\sup_{\delta\in (0,\Delta)}|F_{\delta,\Delta}(x)|dm_d(x)\\
&\leq 2\|\varphi\|_{\infty}\int_{B(0,r)}\int_{B(x,\Delta)}\frac{\,d\mu(y)}{|x-y|^s}dm_d(x),
\end{split}\end{equation}
from which the desired estimate follows from an application of Lemma \ref{locl1}.

We shall be especially interested in the functions $\Tbar_{\mu, \delta}(1)$ and $\Tbar_{\mu}(1)$ below.

\section{Reflectionless Measures}\label{refl}

A diffuse measure $\mu$ (with restricted growth at infinity) is called \textit{reflectionless} if
$$\langle T(f\mu), 1\rangle_{\mu}=0,
$$
for all $f\in \Lip_0(\mathbb{R}^d)$ satisfying $\int_{\R^d} f \,d\mu=0$.

In particular $\langle \T1(f\mu),1\rangle_{\mu}=1$ for any $f\in \Lip_0(\R^d)$.

We first note that, if $\mu$ is reflectionless, and $\nu \in \MD(\mu)$, then the value
$$\langle \T1\nu, 1\rangle_{\mu}
$$
is independent of the particular choice of smooth balancing measure $\nu_0 = \eta\mu$ with $\eta\in \Lip_0(\R^d)$ with $\int_{\mathbb{R}^d}\eta \,d\mu=1$.  To see this, pick another such balancing probability measure $\widetilde\eta \mu$.  Then by linearity, we see that
$$\langle T[\nu-\nu(\R^d)\eta\mu], 1\rangle_{\mu}-\langle T[\nu-\nu(\R^d)\widetilde\eta\mu], 1\rangle_{\mu} = \nu(\mathbb{R}^d)\langle T[(\widetilde\eta-\eta)\mu], 1\rangle_{\mu},
$$
and the right hand side is zero by the defining property of $\mu$ being reflectionless.

Now recall that, as a consequence of Lemma \ref{locl1}, $\delta_x \in \MBD(\mu)$ for $m_d$-almost every $x\in \R^d$.  For each such $x$, we have the formula (\ref{deltprodrep}) with $\varphi\equiv 1$, from which we conclude that the value of $\Tbar_{\mu, \delta}(1)(x)$ (and hence also $\Tbar_{\mu}(1)(x)$) does not depend on the choice of $\eta$.  However, the function $\Tbar_{\mu, \delta}(1)$ is continuous, so this property continues to hold for every $x\in \mathbb{R}^d$.

We shall frequently make use of the observation that if $\mu$ is a non-trivial reflectionless measure, then
\begin{equation}\label{inoutsame}\int_{\R^d}\Tbar_{\mu,\delta}(1)f\,d\mu = -\langle T^{\delta}(f\mu),1\rangle_{\mu}
\end{equation}
whenever $f\in \Lip_0(\R^d)$.  To see this, note that from (\ref{bardecentrep}) applied with the decent measure $\nu=f\mu$ and bounded Lipschitz function $\varphi \equiv 1$, the left hand side equals $ \langle \T1(f\mu),1\rangle_{\mu} - \langle T^{\delta}(f\mu), 1\rangle_{\mu} $,  but $\langle \T1(f\mu),1\rangle_{\mu}=0$.

\section{Nice measures}\label{nicesection}

From this point on, we shall assume that the CZ kernel $K$ under consideration is \textit{homogeneous} of order $-s$, that is,
$$K(\lambda x) = \lambda^{-s}K(x), \text{ for any }x\in \mathbb{R}^d\backslash \{0\} \text{ and }\lambda>0.
$$

For a homogeneous kernel $K$, we may perform a change of variable to motivate a natural condition that we shall frequently impose on a reflectionless measure.

Let $\nu$ be a signed measure, and $\varphi$ a function.  Then for $x\in \mathbb{R}^d$ and $r>0$, define $\nu_{x,r} = \tfrac{\nu(x+r\cdot)}{ r^s}$ and $\varphi_{x,r} = \varphi(x+r\cdot)$.   It is straightforward to verify that $T\nu(z) = T(\nu_{x,r})(\tfrac{z-x}{r})$ and $T_{\delta}\nu(z) = T_{\delta/r}(\nu_{x,r})(\tfrac{z-x}{r})$ for $\delta>0$  (and so $T^{\delta}(\nu)(z) = T^{\delta/r}(\nu_{x,r})(\tfrac{z-x}{r})$), whenever the integral defining the relevant potential converges.  Additionally if $\nu\in \MD(\mu)$ then
\begin{equation}\label{changeofvar}\langle T\nu,\varphi\rangle_{\mu} = \langle T(\nu_{x,r}),\varphi_{x,r}\rangle_{\mu(x+r\cdot)}=r^s\langle T(\nu_{x,r}),\varphi_{x,r}\rangle_{\mu_{x,r}},
\end{equation}
 for $\varphi\in \Lip_0(\mathbb{R}^d)$.

Thus
\begin{equation}\begin{split}\nonumber\Tbar_{\mu,\delta}(1)(z) &= \langle T_{\delta}(\delta_z-\eta\mu),1\rangle_{\mu} -\langle T^{\delta}(\eta\mu),1\rangle_{\mu}\\
& = r^s\langle T_{\delta/r}(\tfrac{1}{r^s}\delta_{\tfrac{z-x}{r}}-\eta_{x,r}\mu_{x,r}),1\rangle_{\mu_{x,r}} -r^s\langle T^{\delta/r}(\eta_{x,r}\mu_{x,r}),1\rangle_{\mu_{x,r}}\\
& = \langle T_{\delta/r}(\delta_{\tfrac{z-x}{r}}-[r^s\eta_{x,r}]\mu_{x,r}),1\rangle_{\mu_{x,r}} -\langle T^{\delta/r}([r^s\eta_{x,r}]\mu_{x,r}),1\rangle_{\mu_{x,r}}.
\end{split}\end{equation}

Now suppose that $\mu$ is a reflectionless measure.  Note that, if $\nu(\R^d)=0$ then $\nu_{x,r}(\R^d)=0$.   Thus, from (\ref{changeofvar}) we see that $\mu_{x,r}$ is also reflectionless.   Also, $\int_{\R^d}\eta\,d\mu=1=\int_{\R^d}[r^s\eta_{x,r}]d\mu_{x,r}$.    So,  since $[r^s\eta_{x,r}]\mu_{x,r}$ is an admissible balancing measure for the reflectionless measure $\mu_{x,r}$ (and the value of $\Tbar_{\mu_{x,r},\delta/r}(1)(\tfrac{z-x}{r})$ does not depend on the choice of the balancing measure), we have that $$\Tbar_{\mu,\delta}(1)(z) = \Tbar_{\mu_{x,r},\delta/r}(1)(\tfrac{z-x}{r}).$$

One consequence of these remarks is that in order to prove estimates for the potential $\Tbar_{\mu}(1)$ that are invariant under translations and dilations, it is natural to impose the hypothesis that $\mu_{x,r}(B(0,1))$ can be bounded uniformly in $x\in \R^d$ and $r>0$.   Fix $\Lambda>0$.  We say that a locally finite non-negative Borel measure $\mu$ is $\Lambda$-\textit{nice} if
$$\mu(B(x,r))\leq \Lambda r^s, \text{ for any }B(x,r)\subset\R^d,
$$
that is,  $\mu_{x,r}(B(0,1))\leq \Lambda$ for every $x\in \R^d$ and $r>0$.

Note that nice measures are diffuse (satisfy Definition \ref{pleasdef}), and satisfy the restricted growth at infinity assumption $\int_{|x|\geq 1}\tfrac{1}{|x|^{s+\alpha}}\,d\mu(x)<\infty$ regardless of $\alpha>0$.

For a nice measure $\mu$, we have a quantitative estimate on the continuity of $\Tbar_{\mu, \delta}(1)$.

\begin{lem}\label{Lipest} Suppose that $\mu$ is a $\Lambda $-nice measure.  There is a constant $C_1>0$, depending on $s$, $d$, $\alpha$, and $\|K\|_{*}$, such that for any $\delta>0$, and $y,y'\in \mathbb{R}^d$, $$|\Tbar_{\mu, \delta}(1)(y)-\Tbar_{\mu,\delta}(1)(y')|\leq \frac{C_1|y-y'|^{\alpha}}{\delta^{\alpha}}.$$\end{lem}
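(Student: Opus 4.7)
The starting point is the identity (\ref{deltdiffform}) applied with $\varphi\equiv 1$, which yields
$$\Tbar_{\mu,\delta}(1)(y)-\Tbar_{\mu,\delta}(1)(y')=\int_{\mathbb{R}^d}[K_\delta(z-y)-K_\delta(z-y')]\,d\mu(z),$$
so the lemma reduces to bounding $I:=\int_{\mathbb{R}^d}|K_\delta(z-y)-K_\delta(z-y')|\,d\mu(z)$ by $C\rho^\alpha/\delta^\alpha$, where $\rho:=|y-y'|$. The plan is to split $I$ into regions adapted to the two scales $\delta$ and $\rho$, using a different pointwise estimate on each piece.

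The key auxiliary bound I would establish first is the global Hölder estimate
$$|K_\delta(x)-K_\delta(x')| \leq C\frac{|x-x'|^\alpha}{\delta^{s+\alpha}}\quad\text{for all }x,x'\in\mathbb{R}^d, \qquad (\star)$$
with a constant independent of $\delta$. When both $|x|,|x'|\leq\delta$, write $K_\delta(x)=K(x)|x|^{s+\alpha}/\delta^{s+\alpha}$ and apply the definition of $\|K\|_*$ directly. When both $|x|,|x'|\geq\delta$, $K_\delta=K$ and $(\star)$ follows from (\ref{standard}) upon noting $\min(|x|,|x'|)\geq\delta$. For the mixed case, select an intermediate point on the segment $[x,x']$ of modulus exactly $\delta$ and combine the two previous cases via the triangle inequality.

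With $(\star)$ in hand, I would split $I$ based on $|z-y|$. On the inner region $|z-y|\leq 2\delta$, $(\star)$ and niceness give
$$\int_{|z-y|\leq 2\delta}|K_\delta(z-y)-K_\delta(z-y')|\,d\mu(z)\leq C\frac{\rho^\alpha}{\delta^{s+\alpha}}\mu(B(y,2\delta))\leq C\Lambda\frac{\rho^\alpha}{\delta^\alpha}.$$
On the outer region $|z-y|>2\max(\delta,\rho)$, both $z-y$ and $z-y'$ have modulus at least $\delta$, so $K_\delta=K$ there, and $|z-y'|\geq|z-y|/2$; then (\ref{standard}) yields $|K(z-y)-K(z-y')|\leq C\rho^\alpha/|z-y|^{s+\alpha}$, and a dyadic-shell integration against a nice measure gives a contribution $\leq C\Lambda\rho^\alpha/\max(\delta,\rho)^\alpha\leq C\Lambda\rho^\alpha/\delta^\alpha$.

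The delicate piece is the intermediate annulus $2\delta<|z-y|\leq 2\rho$, which is nonempty only when $\rho>\delta$. Here $(\star)$ integrated against $\mu(B(y,2\rho))\leq\Lambda(2\rho)^s$ yields $(\rho/\delta)^{s+\alpha}$, too large by a factor $(\rho/\delta)^s$. The remedy is to abandon cancellation and bound
$$|K_\delta(z-y)|+|K_\delta(z-y')|\leq \frac{C}{\max(\delta,|z-y|)^s}+\frac{C}{\max(\delta,|z-y'|)^s};$$
dyadic annular integration with niceness controls each term by $C\Lambda[1+\log(\rho/\delta)]$, and the elementary inequality $\log t\leq t^\alpha/\alpha$ (valid for $t\geq 1$) converts this into $C\Lambda(\rho/\delta)^\alpha$. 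This annulus is the main obstacle: neither $(\star)$ nor the off-diagonal bound (\ref{standard}) is individually sharp enough there, and one is forced to trade cancellation for the logarithmic slack afforded by the niceness hypothesis.
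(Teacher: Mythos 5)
Your proof is correct, and it reaches the same conclusion by a genuinely different route. The paper's proof establishes a single sharp pointwise estimate,
\begin{equation*}
|K_{\delta}(x-y)-K_{\delta}(x-y')|\leq C\,\frac{|y-y'|^{\alpha}}{\bigl[\delta + \min(|x-y|, |x-y'|)\bigr]^{s+\alpha}},
\end{equation*}
which packs the full information about cancellation at every scale into the denominator, and then integrates it in one shot using a dyadic shell argument against the nice measure. Your version replaces this by the weaker global bound $(\star)$ (which has $\delta^{s+\alpha}$ in the denominator rather than $[\delta+\min(\cdot,\cdot)]^{s+\alpha}$, and is therefore too lossy away from the $\delta$-scale), compensating by splitting the integral into three regions: on $|z-y|\leq 2\delta$ the bound $(\star)$ is adequate; on $|z-y|>2\max(\delta,\rho)$ the kernel is unregularized and the standard CZ estimate (\ref{standard}) applies; and on the intermediate annulus you give up cancellation entirely, obtain a bound $C\Lambda[1+\log(\rho/\delta)]$ from crude size estimates, and then absorb the logarithm into the target power $(\rho/\delta)^{\alpha}$. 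Both arguments are valid. The paper's unified estimate avoids the annulus case entirely (since the $\min$ term produces a geometric series $\sum_k 2^{-k\alpha}$ there rather than a flat count of dyadic shells), giving a shorter computation. Your decomposition, on the other hand, makes more transparent exactly which cruder pointwise bounds suffice and where the H\"{o}lder exponent $\alpha$ is truly needed (namely, to pay for the logarithmic slack at intermediate scales), at the cost of tracking three regimes and invoking the elementary bound $\log t\leq t^{\alpha}/\alpha$.
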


\begin{proof}  Recall the formula (\ref{diffform}):
$$\Tbar_{\mu, \delta}(1)(y)-\Tbar_{\mu,\delta}(1)(y')= \int_{\mathbb{R}^d}[K_{\delta}(x-y)-K_{\delta}(x-y')]\,d\mu(x).
$$
We shall integrate the absolute value of the integrand using the estimate
$$|K_{\delta}(x-y)-K_{\delta}(x-y')|\leq C \frac{|y-y'|^{\alpha}}{[\delta + \min(|x-y|, |x-y'|)]^{s+\alpha}}.
$$
Only in the case when both $\min(|x-y|,|x-y'|)<\tfrac{\delta}{2}$ and $|y-y'|<\tfrac{\delta}{2}$ is this estimate is not readily comparable to the previously stated bound (\ref{Kdeltest}).\footnote{To see this, note that if $\min(|x-y|,|x-y'|)\geq\tfrac{\delta}{2}$, then the claimed estimate is at least a constant multiple of $\tfrac{|y-y'|^{\alpha}}{ \min(|x-y|, |x-y'|)]^{s+\alpha}}$, while if $|y-y'|\geq\delta/2$ and $\min(|x-y|,|x-y'|)<\tfrac{\delta}{2}$ then the claimed bound is least a constant multiple of $\delta^{-s}$.} However, under these assumptions both $|x-y|$ and $|x-y'|$ are no greater than $\delta$.  Hence
\begin{equation}\begin{split}\nonumber|K_{\delta}(x-y)-K_{\delta}(x-y')| & = \frac{|K(x-y)|x-y|^{s+\alpha}-K(x-y')|x-y'|^{s+\alpha}|}{\delta^{s+\alpha}}\\
&\leq \frac{\|K\|_{*}|y-y'|^{\alpha}}{\delta^{s+\alpha}}.
\end{split}\end{equation}

The estimation of the integral is now a routine exercise:
\begin{equation}\begin{split}\nonumber
\int_{\R^d} &\frac{|y-y'|^{\alpha}}{[\delta + \min(|x-y|, |x-y'|)]^{s+\alpha}}d\mu(x)\\
&\leq C|y-y'|^{\alpha}\Bigl[\int_{0}^{\infty}\frac{\mu(B(y,t))}{(\delta+t)^{s+\alpha}}\frac{dt}{t}+\int_{0}^{\infty}\frac{\mu(B(y',t))}{(\delta+t)^{s+\alpha}}\frac{dt}{t}\Bigl]\\
&\leq C |y-y'|^{\alpha}\int_0^{\infty}\frac{t^s}{(\delta+t)^{s+\alpha}}\frac{dt}{t} = \frac{C |y-y'|^{\alpha}}{\delta^{\alpha}}\int_0^{\infty}\frac{t^s}{(1+t)^{s+\alpha}}\frac{dt}{t}.
\end{split}\end{equation}
But the integral in this final line is clearly convergent, and this proves the lemma.
\end{proof}

The next estimate provides with an estimate for $\langle T (f\mu), \varphi\rangle_{\mu}$ in terms of the supports of $\supp(f)$ and $\supp(\varphi)$, provided that $\mu$ is nice.  Recall that, for a set $E$, the closed $\delta$-neighbourhood of $E$ is denoted by $E_{\delta}$.

\begin{lem}\label{lipgenest} Suppose that $\mu$ is a $\Lambda$-nice measure.   Let $f\in \Lip_0(\R^d)$, and $\varphi\in \Lip_B(\R^d)$.  Then, for $\delta>0$,
\begin{equation}\begin{split}\nonumber |\langle T^{\delta}(f\mu),\varphi\rangle_{\mu}|\leq C_2\delta\bigl[\|f\|_{\Lip}\|\varphi\|_{\infty} &+ \|f\|_{\infty}\|\varphi\|_{\Lip}\bigl]\\
&\cdot\mu(\supp(f)\cap[\supp(\varphi)]_{\delta}).
\end{split}\end{equation}
\end{lem}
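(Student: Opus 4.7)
The plan is to exploit the compact support of the kernel $K^{\delta}$ in $\overline{B(0,\delta)}$ together with the antisymmetrization formula from Section \ref{cmptsupp}, which represents
$$\langle T^{\delta}(f\mu), \varphi\rangle_{\mu} = \iint_{\R^d\times\R^d} K^{\delta}(x-y) H_{f,\varphi}(x,y) \,d\mu(x)d\mu(y).$$
Since $\mu$ is nice (and hence diffuse), this integral converges absolutely.  I would then expand
$$2H_{f,\varphi}(x,y) \;=\; [f(y)-f(x)]\varphi(x) \;+\; f(x)[\varphi(x)-\varphi(y)]$$
and bound the two resulting double integrals separately.

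In each piece, the Lipschitz hypotheses upgrade the CZ growth bound $|K^{\delta}(x-y)|\leq |x-y|^{-s}$ to an integrand of size $\|f\|_{\Lip}\|\varphi\|_{\infty}|x-y|^{-(s-1)}$, respectively $\|f\|_{\infty}\|\varphi\|_{\Lip}|x-y|^{-(s-1)}$, on the region $\{|x-y|\leq \delta\}$.  A layer-cake / integration-by-parts computation using $\mu(B(z,r))\leq \Lambda r^s$ then yields
$$\int_{B(z,\delta)}\frac{d\mu(y)}{|y-z|^{s-1}}\leq C\Lambda\delta$$
uniformly in $z\in\R^d$ (the cases $s>1$, $s=1$, $s<1$ are handled separately but all produce the same linear-in-$\delta$ bound, either by integration by parts in the radial variable or by pulling out $|y-z|^{1-s}\leq\delta^{1-s}$ and using the nice estimate directly).

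The real work lies in the support bookkeeping needed to restrict the outer integrations to $E:=\supp(f)\cap[\supp(\varphi)]_{\delta}$.  For the first piece, the integrand is nonzero only when $\varphi(x)\neq 0$ and $f(y)\neq f(x)$.  If $x\in\supp(f)$ then $x\in E$; otherwise $f(x)=0$, so $f(y)\neq 0$ puts $y\in\supp(f)$, and then $|x-y|\leq\delta$ together with $x\in\supp(\varphi)$ puts $y\in E$.  This gives a pointwise bound involving $\chi_E(x)+\chi_E(y)$, and either order of Fubini then contributes $C\Lambda\delta\|f\|_{\Lip}\|\varphi\|_{\infty}\mu(E)$.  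For the second piece, the factor $f(x)$ already pins $x\in\supp(f)$; an analogous case-split on whether $\varphi(x)=0$ shows that $x\in E$ whenever the integrand does not vanish, producing a bound of size $C\Lambda\delta\|f\|_{\infty}\|\varphi\|_{\Lip}\mu(E)$.

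Summing the two contributions yields the stated inequality.  The main subtlety, as flagged above, is the support analysis for the first piece: one must be prepared to integrate out either variable, depending on which of $x,y$ lands in $E$, and the two terms $[f(y)-f(x)]\varphi(x)$ and $f(x)[\varphi(x)-\varphi(y)]$ must be treated asymmetrically rather than reassembled into a single symmetric kernel.  Once that bookkeeping is in place, the remaining estimates are routine applications of the CZ growth bound and the $\Lambda$-nice hypothesis.
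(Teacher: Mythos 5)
Your proof is correct and follows essentially the same route as the paper: antisymmetrize to get the integral $\iint K^{\delta}(x-y)H_{f,\varphi}(x,y)\,d\mu\,d\mu$, cancel the kernel singularity via the linear vanishing of $H_{f,\varphi}$ at the diagonal, track supports to land one variable in $E=\supp(f)\cap[\supp(\varphi)]_{\delta}$, and close with the nice-measure estimate $\int_{B(z,\delta)}|y-z|^{1-s}\,d\mu(y)\leq C\Lambda\delta$. The only difference is organizational: the paper keeps the symmetric $H_{f,\varphi}$, bounds $|H_{f,\varphi}(x,y)|\leq\|H_{f,\varphi}\|_{\Lip}|x-y|$, and observes that $H_{f,\varphi}$ is supported in $(\supp f\times\supp\varphi)\cup(\supp\varphi\times\supp f)$, which together with $|x-y|<\delta$ restricts the outer variable to $E$ with no further case analysis, whereas your decomposition of $2H_{f,\varphi}$ into two one-sided difference quotients reaches the same bound at the cost of a small case split and a Fubini swap for the $\chi_E(y)$ term.
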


\begin{proof} Set $E=\supp(f)$ and $F=\supp(\varphi)$.  Recall that $H_{f,\varphi}(x,y) = \tfrac{1}{2}\bigl[f(y)\varphi(x) - f(x)\varphi(y)\bigl]$.  Thus,
$$\|H_{f,\varphi}\|_{\Lip}\leq \bigl[\|f\|_{\Lip}\|\varphi\|_{\infty}+ \|f\|_{\infty}\|\varphi\|_{\Lip}\bigl].
$$
Now,
\begin{equation}\begin{split}\nonumber|\langle & T^{\delta}(f\mu),\varphi\rangle_{\mu}|=\Bigl|\iint\limits_{|x-y|< \delta}H_{f,\varphi}(x,y)K^{\delta}(x-y)d\mu(x)\,d\mu(y)\Bigl|\\
&\leq \|H_{f,\varphi}\|_{\Lip}\iint\limits_{|x-y|< \delta}\frac{\chi\ci{E}(x)\chi\ci{F}(y)+\chi\ci{F}(x)\chi\ci{E}(y)}{|x-y|^{s-1}}d\mu(x)d\mu(y)\\
&=2 \|H_{f,\varphi}\|_{\Lip}\iint\limits_{|x-y|< \delta}\frac{\chi\ci{E}(x)\chi\ci{F}(y)}{|x-y|^{s-1}}d\mu(x)d\mu(y).
\end{split}\end{equation}
But if $x\in E$, $y\in F$, and $|x-y|< \delta$, then clearly $x\in F_{\delta}$.  Thus, we may write
\begin{equation}\begin{split}\nonumber\iint\limits_{|x-y|< \delta}\frac{\chi\ci{E}(x)\chi\ci{F}(y)}{|x-y|^{s-1}}d\mu(x)d\mu(y)&=\iint\limits_{|x-y|< \delta}\frac{\chi\ci{E\cap F_{\delta}}(x)\chi\ci{F}(y)}{|x-y|^{s-1}}d\mu(x)d\mu(y)\\
& \leq\int_{E\cap F_{\delta}} \int_{B(x,\delta)}\frac{1}{|x-y|^{s-1}}d\mu(y)d\mu(x).
\end{split}\end{equation}
However, for every $x\in \R^d$,
$$\int_{B(x,\delta)}\frac{1}{|x-y|^{s-1}}d\mu(y)\leq C\int_0^{\delta}\frac{\mu(B(x,r))}{r^{s-1}}\frac{dr}{r}\leq C\Lambda\delta.
$$
Bringing everything together, we get
$$|\langle  T^{\delta}(f\mu),\varphi\rangle_{\mu}|\leq C \|H_{f,\varphi}\|_{\Lip}\Lambda  \delta \mu(E\cap F_{\delta}),
$$
as required.
\end{proof}

Our final estimate of this section is a consequence of the previous lemma.  It will play an important role in what follows.

\begin{lem}\label{lipanest}  Suppose that $\mu$ is a $\Lambda$-nice measure.   Fix a point $x\in \R^d$, a radius $r>0$, and a gauge $\delta>0$.  If $f\in \Lip_0(B(x, r+2\delta))$ satisfies $0\leq f\leq 1$ in $\mathbb{R}^d$, $f\equiv 1$ on $B(x,r+\delta)$, and $\|f\|_{\Lip}\leq \tfrac{A}{\delta}$ for some $A>1$, then
$$|\langle T^{\delta}(f\mu),1\rangle_{\mu}|\leq C_3A\mu(B(x,r+2\delta)\backslash B(x,r)).
$$
\end{lem}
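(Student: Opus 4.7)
The plan is to expand $\langle T^{\delta}(f\mu),1\rangle_{\mu}$ via antisymmetrization and observe that, despite $\varphi\equiv 1$ having infinite support, the effective support of the integrand localizes to the annulus $\Omega = B(x,r+2\delta)\setminus B(x,r)$. Since $K^{\delta}$ is compactly supported, Section \ref{cmptsupp} gives
$$
\langle T^{\delta}(f\mu),1\rangle_{\mu} = \iint_{\mathbb{R}^d\times\mathbb{R}^d} K^{\delta}(z-w)\, H_{f,1}(z,w)\, d\mu(z)\, d\mu(w),
$$
where $H_{f,1}(z,w) = \tfrac{1}{2}[f(w)-f(z)]$. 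The integrand vanishes unless $|z-w|\leq \delta$ and $f(z)\ne f(w)$.

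I would then pin down the support. If $z\in B(x,r)\subset B(x,r+\delta)$, then $f(z)=1$, so $H_{f,1}\ne 0$ requires $f(w)\ne 1$, forcing $w\notin B(x,r+\delta)$. The triangle inequality then yields $|z-w|\geq |w-x|-|z-x|>(r+\delta)-r=\delta$, a contradiction. By symmetry neither $z$ nor $w$ lies in $B(x,r)$. Next, if both $z,w\notin B(x,r+2\delta)$ then $f(z)=f(w)=0$ and $H_{f,1}=0$. Thus the integrand is supported where at least one of $z,w$ lies in $\Omega$. Since $K^{\delta}(z-w)H_{f,1}(z,w)$ is symmetric in $(z,w)$ (both factors are antisymmetric under the swap), a symmetry argument gives
$$
|\langle T^{\delta}(f\mu),1\rangle_{\mu}|\leq 2\int_{\Omega}\int_{B(z,\delta)} |K^{\delta}(z-w)|\,|H_{f,1}(z,w)|\,d\mu(w)\,d\mu(z).
$$

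Finally, combining $|H_{f,1}(z,w)|\leq \tfrac{1}{2}\|f\|_{\Lip}|z-w|\leq \tfrac{A}{2\delta}|z-w|$ with $|K^{\delta}(z-w)|\leq |z-w|^{-s}$, the inner integrand is bounded by $\tfrac{A}{2\delta}|z-w|^{-(s-1)}$. A standard layer-cake argument using the niceness bound $\mu(B(z,t))\leq \Lambda t^s$ gives $\int_{B(z,\delta)}|z-w|^{-(s-1)}\,d\mu(w)\leq C\Lambda\delta$, whence $|\langle T^{\delta}(f\mu),1\rangle_{\mu}|\leq C\Lambda A\,\mu(\Omega)$, as required. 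The only subtlety is the support analysis: a naive invocation of Lemma \ref{lipgenest} with $\varphi\equiv 1$ would yield a bound involving $\mu(B(x,r+2\delta))$, missing the cancellation provided by $f\equiv 1$ on $B(x,r+\delta)$.
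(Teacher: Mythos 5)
Your proof is correct and turns on the same underlying cancellation, though you unpack it by hand rather than routing through a prior lemma. The paper uses antisymmetry to write $\langle T^{\delta}(f\mu),1\rangle_{\mu}=\langle T^{\delta}(f\mu),1-f\rangle_{\mu}$ (since $H_{f,f}\equiv 0$) and then cites Lemma~\ref{lipgenest} with $\varphi=1-f$; the set $\supp(f)\cap[\supp(1-f)]_{\delta}$ appearing in that lemma is precisely your annulus $\Omega$, while $\|1-f\|_{\Lip}=\|f\|_{\Lip}\leq A/\delta$ and $0\leq 1-f\leq 1$. Your version instead works directly from the symmetrized bilinear form of Section~\ref{cmptsupp} and localizes the support of the integrand $(z,w)\mapsto K^{\delta}(z-w)H_{f,1}(z,w)$, which is in effect reproving the relevant instance of Lemma~\ref{lipgenest} in place: the symmetry-doubling step and the layer-cake estimate $\int_{B(z,\delta)}|z-w|^{-(s-1)}\,d\mu(w)\leq C\Lambda\delta$ mirror the proof of that lemma. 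Your closing caution about applying Lemma~\ref{lipgenest} naively with $\varphi\equiv 1$ is exactly the point; the paper avoids losing the annular localization by passing to $\varphi=1-f$ first. The two routes give the same constant up to a dimensional factor, and the paper's is shorter only because it reuses Lemma~\ref{lipgenest}.
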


\begin{proof}
Using the anti-symmetry of the kernel $K^{\delta}$ we see that
$$\langle T^{\delta}(f\mu),1\rangle_{\mu}= \langle T^{\delta}(f\mu),1-f \rangle_{\mu}.$$
We now apply Lemma \ref{lipgenest} with $\varphi=1-f$.  Since $\supp(f)\cap [\supp(\varphi)]_{\delta}\subset B(x,r+2\delta)\backslash B(x,r)$, $\|\varphi\|_{\Lip}\leq \tfrac{A}{\delta}$, and $0\leq \varphi\leq 1$ on $\R^d$, the stated estimate follows immediately.
\end{proof}

\section{The Collapse Lemma}\label{colsec}

This section is devoted to introducing the main technical tool of the paper.  Throughout the section, suppose that $\mu$ is a non-trivial $\Lambda$-nice reflectionless measure.

For a unit vector $\e\in \mathbb{C}^{d'}$, and $\eps>0$, define
$$E(\e, \eps, r) = \bigl\{ x\in \mathbb{R}^d: \Re[\e\cdot \Tbar_{\mu,\delta}(1)](x)>\eps \text{ for all }\delta\in (0,r)\bigl\}.
$$

\begin{prop}[The Collapse Lemma] \label{collem} Let $\eps\in (0,\tfrac{1}{2})$.
  There exists $\beta>0$ (depending on  $s$ and $\alpha$), such that if $\kap \leq \kap(\eps)=c_{9}\eps^{\beta}$, then the following holds:  If $E(\e, \eps, r)$ is $\kap r$-dense in $B(x_0,2r)$, then $\mu(B(x_0,r))=0$.
\end{prop}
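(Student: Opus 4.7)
The plan is to couple the Hölder regularity of $\Tbar_{\mu,\delta}(1)$ (Lemma~\ref{Lipest}) with the reflectionless identity \eqref{inoutsame} and the local estimate of Lemma~\ref{lipanest} in order to extract a geometric contraction for $m(\rho):=\mu(B(x_0,\rho))$, and then iterate and bootstrap. Set $\delta_0 := (2C_1/\eps)^{1/\alpha}\kap r$. The first step is to propagate the defining bound for $E(\e,\eps,r)$: for each $x\in B(x_0,2r)$, choose $y\in E(\e,\eps,r)$ with $|x-y|\leq\kap r$; then Lemma~\ref{Lipest} yields $|\Tbar_{\mu,\delta_0}(1)(x)-\Tbar_{\mu,\delta_0}(1)(y)|\leq C_1(\kap r/\delta_0)^\alpha = \eps/2$, so $\Re[\e\cdot\Tbar_{\mu,\delta_0}(1)(x)]>\eps/2$ on all of $B(x_0,2r)$. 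This forces $\delta_0<r$, i.e.\ $\kap$ small in terms of $\eps$, a requirement I absorb into the choice of $c_9$.

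Next, for any $\rho\in[r,\,2r-2\delta_0]$ I pick $f\in\Lip_0(B(x_0,\rho+2\delta_0))$ with $f\equiv 1$ on $B(x_0,\rho+\delta_0)$, $0\leq f\leq 1$, and $\|f\|_{\Lip}\leq 2/\delta_0$. Applying \eqref{inoutsame} at gauge $\delta_0$ and taking the real part of the $\e$-inner product of both sides: the integrand on the left is nonnegative on $\supp(f)$ by the propagation, so the integral is bounded below by $(\eps/2)m(\rho+\delta_0)\geq(\eps/2)m(\rho)$; Lemma~\ref{lipanest} (with $A=2$) bounds the right-hand side in modulus by $2C_3\,\mu(B(x_0,\rho+2\delta_0)\setminus B(x_0,\rho))$. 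Rearranging,
\begin{equation*}
m(\rho)\;\leq\;\theta\, m(\rho+2\delta_0),\qquad \theta:=\frac{2C_3}{2C_3+\eps/2}\in(0,1).
\end{equation*}
Iterating upward from $\rho=r$ until $\rho_K=r+2K\delta_0\leq 2r$, and using the nice bound on the outermost ball, gives $m(r)\leq\theta^K\Lambda(2r)^s$ with $K\sim r/\delta_0\sim\eps^{1/\alpha}/\kap$.

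I take $\beta:=1+1/\alpha$; then $K(1-\theta)\asymp K\eps/\Lambda\asymp 1/(c_9\Lambda)$ is uniformly bounded below on $\eps\in(0,1/2)$, so fixing $c_9$ small enough forces $\theta^K\leq 1/(2\cdot 2^s)$, which extracts an improved local niceness $\Lambda_1:=2^s\theta^K\Lambda<\Lambda/2$ at the central ball. The same contraction runs centered at any $z\in B(x_0,2r-O(\delta_0))$ at any admissible radius $\rho\gtrsim(\kap/c_9)\eps^{-\beta}r$, since $E(\e,\eps,\rho)\supseteq E(\e,\eps,r)$ remains sufficiently dense in $B(z,2\rho)\subseteq B(x_0,2r)$; thus $\mu$ is in fact $\Lambda_1$-nice on a rich family of balls throughout $B(x_0,2r)$. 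Because $C_3$ depends linearly on the nice constant via $\int_{B(y,\delta)}|y-z|^{1-s}\,d\mu(z)\leq C\Lambda\delta$ in the proof of Lemma~\ref{lipgenest}, rerunning the entire argument with $\Lambda_1$ in place of $\Lambda$ produces $\Lambda_2<\Lambda_1/2$; inductively $\Lambda_n\to 0$, forcing $m(r)=0$.

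The main obstacle I expect is making this bootstrap precise. One must verify that the improved niceness is genuinely accessible on every ball relevant to the next application of Lemma~\ref{lipanest} (not only at the central ball $B(x_0,r)$), and balance $\beta$ and $c_9$ against $s,\alpha$ so that the initial density hypothesis at scale $r$ continues to imply the analogous hypothesis at every smaller scale touched by the iteration; if this accounting breaks down, one is left only with a quantitative bound $m(r)\leq C(\eps,\Lambda)\,r^s$ rather than the sought exact vanishing.
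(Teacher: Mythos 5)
Your contraction step is essentially correct and matches the paper's Lemma~\ref{decaylem}: propagate the lower bound on $\Re[\e\cdot\Tbar_{\mu,\delta_0}(1)]$ to all of $B(x_0,2r)$ by the H\"{o}lder estimate, then feed (\ref{inoutsame}) and Lemma~\ref{lipanest} into each other to get $m(\rho)\leq\theta\,m(\rho+2\delta_0)$ with $1-\theta\asymp\eps$. Iterating this $K\asymp r/\delta_0$ times does force $m(r)\leq\theta^K\Lambda(2r)^s$, which can be made an arbitrarily small multiple of $\Lambda r^s$ by shrinking $c_9$. Up to here you have reproduced the first half of the paper's proof.

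The gap is precisely where you flag it, and it is fatal to the bootstrap as written. The factor $\Lambda$ enters the contraction constant $\theta$ through $C_3$, which in turn comes from the \emph{local} estimate $\int_{B(y,\delta_0)}|y-z|^{1-s}\,d\mu(z)\leq C\Lambda\delta_0$ applied at the tiny gauge scale $\delta_0$ in the rim. Your first pass only tells you that $\mu(B(x_0,r))\leq\Lambda_1 r^s$ for the one large ball, and (after being more careful) perhaps for a family of balls whose radius is bounded below by a fixed multiple of $r$. It does not give an improved density bound $\mu(B(y,\delta_0))\leq\Lambda_1\delta_0^s$ at the gauge scale, which is what you would actually need to replace $C_3$ by a smaller constant in the second pass. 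Checking your own scale accounting: rerunning the argument centred at $z$ at radius $\rho$ requires the abundancy at scale $\rho$, namely $\kap r/\rho$, to be $\leq c_9\eps^\beta$, which forces $\rho\gtrsim r$; so the scales you can improve never shrink, and the bootstrap stalls after one round, leaving only the quantitative bound $m(r)\leq C(\eps,\Lambda)r^s$.

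What the paper does instead, and what your argument is missing, is Lemma~\ref{densinclem}: once $\mu(B(0,t))$ is small, the \emph{abundancy} parameter $\kap$ can be improved, not the niceness constant $\Lambda$. The mechanism is Lemma~\ref{twodiffs} together with Chebyshev: $\int_{B(0,t')}\sup_{\delta<\sqrt{\kap}}|\Tbar_{\mu,\delta}(1)-\Tbar_{\mu,\sqrt{\kap}}(1)|\,dm_d$ is controlled by $\mu(B(0,t))\kap^{(d-s)/2}$, so when the measure is small the set $E(\e,\eps',1)$ occupies almost full Lebesgue measure in $B(0,t')$, hence is $\kap'$-dense with $\kap'=C_6 m^{1/2d}$. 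Since the measure decays geometrically under the contraction step, so does $\sqrt{\kap_j}$, and the total decrement of the radius and of $\eps_j$ over infinitely many steps stays finite. This is the extra mechanism that converts your quantitative bound into exact vanishing; without it the iteration has a fixed step length $\sqrt{\kap}\,r$ and terminates after finitely many steps.
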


We shall sometimes refer to $\kap$ as the \textit{abundancy parameter}, as it governs the abundance of the set $E(\e,\eps,1)$ in the ball $B(x_0,2r)$.

First note that by considering the measure $\tfrac{\mu(x_0+r\cdot)}{r^s}$ instead of $\mu$, it suffices to prove the result for $x_0=0$ and $r=1$.  The proof relies upon two ideas, which are expressed by the following two lemmas.

\begin{lem}\label{decaylem}  Let $\eps\in (0,\tfrac{1}{2})$, $\kap\in (0,1)$, and $t\in (1,2]$.  Suppose that $E(\e,\eps, 1)$ is $\kap $-dense in $B(0,t)$.  If $\eps  \geq 2C_1\kap^{\tfrac{\alpha}{2}}$, then $$\mu(B(0,t-\sqrt{\kap}))\leq (1-\lambda)\mu(B(0,t)),$$ with $\lambda=c_4\eps$.
\end{lem}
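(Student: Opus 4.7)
My plan is to test the reflectionless identity \eqref{inoutsame} against a Lipschitz cutoff $f$ adapted to the ball $B(0, t-\sqrt{\kap})$, using the $\kap$-density of $E(\e,\eps,1)$ together with the H\"older regularity of $\Tbar_{\mu,\delta}(1)$ to extract a linear-in-$\eps$ share of $\mu(B(0, t-\sqrt{\kap}))$ from one side, and Lemma \ref{lipanest} to bound the other side by the mass in a thin annulus at $\partial B(0,t)$.

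Fix $\delta = \sqrt{\kap}$. For any $y \in B(0,t)$, the density hypothesis produces $z \in E(\e,\eps,1)$ with $|y-z| \le \kap$. Since $\delta \in (0,1)$, the definition of $E$ gives $\Re[\e\cdot\Tbar_{\mu,\delta}(1)](z) > \eps$; Lemma \ref{Lipest} then yields $|\Tbar_{\mu,\delta}(1)(y)-\Tbar_{\mu,\delta}(1)(z)| \le C_1(\kap/\delta)^\alpha = C_1\kap^{\alpha/2} \le \eps/2$, the final step being precisely the standing hypothesis $\eps \ge 2C_1\kap^{\alpha/2}$. Consequently $\Re[\e\cdot\Tbar_{\mu,\delta}(1)](y) \ge \eps/2$ for every $y \in B(0,t)$.

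I would then select $f \in \Lip_0(\R^d)$ with $0 \le f \le 1$, $f \equiv 1$ on $B(0, t-\sqrt{\kap})$, $\supp f \subset B(0,t)$, and $\|f\|_\Lip \le 2/\sqrt{\kap}$. By \eqref{inoutsame}, $\int_{\R^d}\Tbar_{\mu,\delta}(1) f\,d\mu = -\langle T^\delta(f\mu), 1\rangle_\mu$; pairing both sides with $\e$, taking real parts, and invoking the uniform lower bound just obtained yields
\[
\tfrac{\eps}{2}\,\mu(B(0, t-\sqrt{\kap})) \le \tfrac{\eps}{2}\int_{\R^d} f\,d\mu \le \bigl|\langle T^\delta(f\mu), 1\rangle_\mu\bigr|,
\]
while Lemma \ref{lipanest} controls the right-hand side by a constant multiple of the mass of the boundary annulus surrounding $\partial B(0,t)$. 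Combining and rearranging (using $\eps < 1/2$ to linearize $1/(1+\eps/(2C_3)) \le 1-\eps/(4C_3)$) produces $\mu(B(0, t-\sqrt{\kap})) \le (1 - c_4\eps)\,\mu(B(0,t))$ with $c_4 \sim 1/C_3$.

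The step I expect to be the main obstacle is the scale bookkeeping. Lemma \ref{Lipest} is tight at $\delta = \sqrt{\kap}$, giving the full $\eps/2$ gain; but Lemma \ref{lipanest} with this $\delta$ naturally produces an annulus of width $2\sqrt{\kap}$ rather than the $\sqrt{\kap}$ required for the conclusion to read off $\mu(B(0,t-\sqrt{\kap}))$. I would remedy this by slightly shrinking $\delta$ to $\sqrt{\kap}/2$ and shaping the cutoff so that its transition occupies exactly the target annulus $\{t-\sqrt{\kap}<|x|<t\}$; this pays a factor $2^\alpha$ in the H\"older estimate, absorbed cleanly into $c_4$ for $\alpha<1$, and at $\alpha=1$ one reinterprets $\sqrt{\kap}$ in the statement as $c\sqrt{\kap}$ with a small $c$. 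The remainder of the proof is purely the interplay between the reflectionless identity and the H\"older continuity of $\Tbar_{\mu,\delta}(1)$.
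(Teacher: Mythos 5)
Your proposal is correct and follows the same route as the paper: use the $\kap$-density of $E(\e,\eps,1)$ together with the H\"{o}lder bound of Lemma \ref{Lipest} to force $\Re[\e\cdot\Tbar_{\mu,\sqrt{\kap}}(1)]\ge\eps/2$ throughout $B(0,t)$, test the reflectionless identity (\ref{inoutsame}) against a Lipschitz cutoff at scale $\sqrt{\kap}$, and bound the resulting $\langle T^{\delta}(f\mu),1\rangle_\mu$ by annulus mass via Lemma \ref{lipanest}. The scale-bookkeeping wrinkle you flag at the end is real (the paper's own write-up glosses over the same discrepancy between the annulus width produced by Lemma \ref{lipanest} and the $\sqrt{\kap}$ in the conclusion), but as you note it is cosmetic: either shrink the gauge and absorb the $2^{\alpha}$ factor into the constant in the hypothesis $\eps\ge 2C_1\kap^{\alpha/2}$ (not into $c_4$, which is where your phrasing is a little off), or simply relabel $\sqrt{\kap}$ by a fixed constant multiple; neither affects the iteration in the Collapse Lemma.
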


We remark that $\eps>0$ is to be considered a fixed noticeable quantity, and so the lemma says that as long the abundancy parameter $\kap$  is small, the measure of the slightly smaller ball $\mu(B(0,t-\sqrt{\kap}))$ is noticeably less than $\mu(B(0,t))$.

The idea of the proof is quite simple.  Since $\Tbar_{\mu, \sqrt{\kap}}(1)$ is essentially constant on scale $\kap$, we have $\Re[\e\cdot\Tbar_{\mu, \sqrt{\kap}}(1)]>\tfrac{\eps}{2}$ on $B(0,t-\sqrt{\kap})$.  The reflectionless property means, roughly speaking, that $\Tbar_{\mu}(1)$ vanishes on $\supp(\mu)$, so on the set $E=\supp(\mu)\cap B(0, t-\sqrt{\kap})$ we must have $\Re[\e\cdot T_{\mu}^{\sqrt{\kap}}(1)]=\Re[\e\cdot T_{\mu}^{\sqrt{\kap}}(\chi_{B(0,t)})] <-\tfrac{\eps}{2}$.  The antisymmetry of the kernel $K$ implies, however, that the average of $T_{\mu}^{\sqrt{\kap}}(\chi_{B(0,t-\sqrt{\kap})})$ over $E$ with respect to $\mu$ is $0$.  Hence the contribution of the rim $B(0,t)\backslash B(0,t-\sqrt{\kap})$ should be noticeable, which forces its $\mu$-measure to be a noticeable portion of $\mu(B(0,t))$.  Of course, $\Tbar_{\mu}(1)$ is defined on $\supp(\mu)$ only in the sense of a linear form on $\Lip_0(\mathbb{R}^d)$, so we have to use Lipschitz cutoff functions instead of rough characteristic functions.

\begin{proof}
Choose a non-negative function $f\in \Lip_0(B(0, t))$, with $f \equiv 1$ on $B(0, t-\tfrac{1}{2}\sqrt{\kap})$, $0\leq f\leq 1$ on $\R^d$, and $\|f\|_{\Lip}\leq \tfrac{C}{\sqrt{\kap}}$.

For every $x\in B(0, t)$, there exists $x'\in E(\e, \eps, 1)$ with $|x-x'|\leq\kap$.  Thus, for any $\delta\in[ \sqrt{\kap},1)$, \begin{equation}\label{kapholder}|\Tbar_{\mu,\delta}(1)(x) - \Tbar_{\mu,\delta}(1)(x')|\leq C_1\Bigl(\frac{\kap}{\delta}\Bigl)^{\alpha}\leq C_1\kap^{\tfrac{\alpha}{2}}.\end{equation} In the case $\delta=\sqrt{\kap}$, we infer from (\ref{kapholder})  that $\Re[\e\cdot \Tbar_{\mu,\sqrt{\kap}}(1)](x)>\eps-C_1\kap^{\tfrac{\alpha}{2}}\geq\tfrac{\eps}{2}$.

As a result of this property and the reflectionlessness of $\mu$, we have
\begin{equation}\nonumber\begin{split}\frac{\eps}{2} \mu(B(0, t-\sqrt{\kap}))&\leq \frac{\eps}{2}\int_{\mathbb{R}^d} f\,d\mu\\
&\leq \Re\Bigl[ \e\cdot\int_{\mathbb{R}^d}\Tbar_{\mu,\sqrt{\kap}}(1)f \,d\mu\Bigl] \stackrel{(\ref{inoutsame})}{=} -\Re[\e\cdot  \langle T^{\sqrt{\kap}}(f\mu),1\rangle_{\mu}]. \\
\end{split}\end{equation}
On the other hand, Lemma \ref{lipanest}, applied with the point $0$, radius $t-\sqrt{\kap}$, and gauge $\delta = \tfrac{1}{2}\sqrt{\kap}$, yields
$$|\langle T^{\sqrt{\kap}}(f\mu),1\rangle_{\mu}|\leq C\mu(B(0, t)\backslash B(0, t-\sqrt{\kap})).
$$

Bringing these two estimates together, we see that
\begin{equation}\label{rearrange}\eps\mu(B(0,t-\sqrt{\kap})) \leq C\mu(B(0,t)\backslash B(0,t-\sqrt{\kap})).
\end{equation}
From which it follows that,
\begin{equation}\label{decay}\mu(B(0, t-\sqrt{\kap})) \leq (1-\lambda) \mu(B(0,t)),
\end{equation}
with  $\lambda = c_4\eps$, for $c_4$ chosen suitably.\end{proof}

Lemma \ref{decaylem} goes a long way towards the proof of the Collapse lemma because it implies that the measure of $B(0,1)$ can be made an arbitrarily small portion of $\mu(B(0,2))$ if $\kap$ is chosen small enough.  However, it cannot finish the job alone because the abundance parameter $\kap$ doesn't change along the way and we have to subtract $\sqrt{\kap}$ from the radius at each step.  We would like to gradually diminish the abundance parameter as we go.

The next lemma shows that it is, indeed, possible to diminish the abundancy parameter once $\mu(B(0,t))$ becomes small enough and, moreover, the abundancy parameter for the smaller ball $B(0,t-\sqrt{\kap})$ can be chosen as a power of the measure $\mu(B(0,t))$.  This comes at the cost of slightly decreasing the size parameter $\eps$, but since the decay of the measure is geometric, we may then hope to be able to make infinitely many steps and bring the measure to $0$ before the radius or $\eps$ reduces to $0$.

\begin{lem}\label{densinclem} Let $\eps\in (0, \tfrac{1}{2})$, $\kap\in (0, 1)$,  $m\in (0, 1)$, and $t\in (1,2]$.  Suppose that $E(\e,\eps,1)$ is $\kap$-dense in $B(0,t-\sqrt{\kap})$, and $\mu(B(0,t))\leq m$.  There exists a constant $C_6$ such that for
$$\eps'=\eps - C_1[\kap^{\tfrac{\alpha}{2}}+\sqrt{m}], \, \kap^{\,\prime} = C_6 m^{\tfrac{1}{2d}},\,\text{ and } t'=t-\sqrt{\kap},
$$
the intersection $E(\e, \eps',1)$ is $\kap^{\,\prime}$-dense in $B(0,t')$. \end{lem}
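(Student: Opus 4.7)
\textbf{Proof Plan for Lemma \ref{densinclem}.}

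The plan is as follows. Fix an arbitrary $y\in B(0,t')$; we must produce $x\in B(y,\kap^{\,\prime})$ with $\Re[\e\cdot \Tbar_{\mu,\delta}(1)](x)>\eps^{\,\prime}$ for every $\delta\in(0,1)$. By the $\kap$-density hypothesis applied at $y$ (which lies in $B(0,t-\sqrt{\kap})$), pick $x_0\in E(\e,\eps,1)$ with $|y-x_0|\le \kap$. The idea is that \emph{any} point $x$ close to $x_0$ will satisfy the required inequality in the range $\delta\ge \sqrt{\kap}$ thanks to the Lipschitz estimate of Lemma \ref{Lipest}, so the only thing to arrange is good behaviour of $\Tbar_{\mu,\delta}(1)(x)-\Tbar_{\mu,\sqrt{\kap}}(1)(x)=F_{\delta,\sqrt\kap}(x)$ uniformly for $\delta\in (0,\sqrt\kap]$. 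That is the place where the smallness of $\mu(B(0,t))$ must be used, and it will be extracted by an averaging argument over the small ball $B(y,\kap^{\,\prime})$.

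Set $\Delta_0=\sqrt{\kap}-\kap^{\,\prime}$; one may assume $\kap^{\,\prime}<\tfrac12\sqrt{\kap}$ (otherwise $x=x_0$ already works since $\kap\le \kap^{\,\prime}$ after a harmless adjustment of $C_6$). Because $|y|\le t-\sqrt{\kap}$, the ball $B(y,\kap^{\,\prime}+\Delta_0)=B(y,\sqrt{\kap})$ is contained in $B(0,t)$, so $\mu(B(y,\sqrt{\kap}))\le m$. Apply Lemma \ref{twodiffs} on $B(y,\kap^{\,\prime})$ with $\Delta=\Delta_0$ and $\varphi\equiv 1$:
\begin{equation*}
\int_{B(y,\kap^{\,\prime})}\sup_{\delta\in(0,\Delta_0)}|F_{\delta,\Delta_0}(x)|\,dm_d(x)\le C(\kap^{\,\prime})^{d-s}\,m.
\end{equation*}
Dividing by $m_d(B(y,\kap^{\,\prime}))\sim (\kap^{\,\prime})^d$ and plugging in $\kap^{\,\prime}=C_6m^{1/(2d)}$, the mean value of $\sup_{\delta<\Delta_0}|F_{\delta,\Delta_0}|$ on $B(y,\kap^{\,\prime})$ is at most $CC_6^{-s}\,m^{1-s/(2d)}\le CC_6^{-s}\sqrt{m}$, where we used $s<d$ and $m\le 1$. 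By Chebyshev, at least half of the points $x\in B(y,\kap^{\,\prime})$ satisfy $\sup_{\delta\in(0,\Delta_0)}|F_{\delta,\Delta_0}(x)|\le 2CC_6^{-s}\sqrt{m}$; fix any such $x$.

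It remains to check the required lower bound at this $x$. For $\delta\ge \Delta_0\ge \tfrac12\sqrt{\kap}$, Lemma \ref{Lipest} yields
\begin{equation*}
|\Tbar_{\mu,\delta}(1)(x)-\Tbar_{\mu,\delta}(1)(x_0)|\le C_1\Bigl(\tfrac{|x-x_0|}{\delta}\Bigl)^{\alpha}\le C_1(4\sqrt{\kap})^{\alpha}\le C'\kap^{\alpha/2},
\end{equation*}
because $|x-x_0|\le \kap^{\,\prime}+\kap\le 2\kap$. Since $\Re[\e\cdot\Tbar_{\mu,\delta}(1)](x_0)>\eps$, this already gives the bound in the range $\delta\ge\Delta_0$. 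For $\delta\in(0,\Delta_0)$, decompose $\Tbar_{\mu,\delta}(1)(x)=\Tbar_{\mu,\Delta_0}(1)(x)+F_{\delta,\Delta_0}(x)$ and combine the case just handled (applied at $\delta=\Delta_0$) with the $\sqrt{m}$ bound on $|F_{\delta,\Delta_0}(x)|$:
\begin{equation*}
\Re[\e\cdot\Tbar_{\mu,\delta}(1)](x)>\eps-C'\kap^{\alpha/2}-2CC_6^{-s}\sqrt{m}.
\end{equation*}
Choosing $C_6$ large (depending only on $s,d,\alpha,\|K\|_*$ and $\Lambda$) this is at least $\eps-C_1[\kap^{\alpha/2}+\sqrt{m}]=\eps^{\,\prime}$, as required.

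The main technical hurdle is arranging that the averaging argument over $B(y,\kap^{\,\prime})$ genuinely exploits the mass bound $m$ rather than the coarser $\Lambda$-niceness bound; this is why the auxiliary scale $\Delta_0=\sqrt{\kap}-\kap^{\,\prime}$ is used, so that $B(y,\kap^{\,\prime}+\Delta_0)$ still fits inside $B(0,t)$. Once that is done, the remaining bookkeeping (splitting the range of $\delta$ at $\Delta_0$, and combining Lemma \ref{Lipest} with the Chebyshev-produced control on $F_{\delta,\Delta_0}$) is routine, with $C_6$ absorbing the implicit constants from Lemma \ref{twodiffs} and Chebyshev.
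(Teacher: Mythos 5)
Your approach differs from the paper's: instead of running Chebyshev once globally on $B(0,t')$ and then using a volume-packing argument to conclude density of the exceptional set, you run Chebyshev locally on each small ball $B(y,\kap')$ and try to produce a good point in that ball directly. The local strategy is a legitimate alternative in spirit, but as executed it has a genuine gap in the Lipschitz comparison.

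The issue is the constant in front of $\kap^{\alpha/2}$. The statement fixes $\eps'=\eps-C_1[\kap^{\alpha/2}+\sqrt m]$ with $C_1$ equal to the H\"older constant from Lemma \ref{Lipest}, so you must not lose anything there. The paper compares a point $x\in B(0,t')$ directly to a nearby $x'\in E(\e,\eps,1)$ with $|x-x'|\le \kap$ and works at scales $\delta\ge\sqrt\kap$, so Lemma \ref{Lipest} gives exactly $C_1(\kap/\sqrt\kap)^\alpha=C_1\kap^{\alpha/2}$. In your argument, the good point $x$ lies in $B(y,\kap')$ while the reference point $x_0\in E(\e,\eps,1)$ lies near $y$, so $|x-x_0|\le\kap+\kap'$; and you work at scale $\Delta_0=\sqrt\kap-\kap'<\sqrt\kap$. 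Both the numerator and the denominator move in the wrong direction, and the Lipschitz estimate produces $C_1\bigl(\tfrac{\kap+\kap'}{\sqrt\kap-\kap'}\bigr)^\alpha$, which is $4^\alpha C_1\kap^{\alpha/2}$ at best (and that bound itself requires $\kap'\le\kap$, a case your "WLOG $\kap'<\tfrac12\sqrt\kap$" reduction does not cover; when $\kap<\kap'<\tfrac12\sqrt\kap$ the ratio degrades to an absolute constant). Since $C_6$ only enters the $\sqrt m$ term, enlarging $C_6$ cannot compensate for the overshoot in the $\kap^{\alpha/2}$ term, and the final chain $\eps-C'\kap^{\alpha/2}-2CC_6^{-s}\sqrt m\ge\eps'$ fails whenever $\kap^{\alpha/2}$ dominates $\sqrt m$ (which is exactly the regime used in the Collapse Lemma iteration, where $\kap\sim m^{1/(2d)}$).

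Two repairs are available. Either switch to the paper's global Chebyshev: apply Lemma \ref{twodiffs} once over $B(0,t')$ with $\Delta=\sqrt\kap$ and $\varphi\equiv 1$, deduce that $m_d\{x\in B(0,t'):\sup_{\delta<\sqrt\kap}|F_{\delta,\sqrt\kap}(x)|>C_1\sqrt m\}\le\tfrac{2C_5}{C_1}\sqrt m$, observe that every $x\in B(0,t')$ outside this set belongs to $E(\e,\eps',1)$ (the $\kap$-density at $x$ itself plus $\delta\ge\sqrt\kap$ then gives exactly $C_1\kap^{\alpha/2}$), and finish with the ball-packing argument. Alternatively, keep your local scheme but centre the averaging ball at an inward-shifted point $y^*\in B(0,t'-\kap')$ with $|y-y^*|\le\kap'$: then $B(y^*,\kap'+\sqrt\kap)\subset B(0,t)$ so Lemma \ref{twodiffs} applies at scale $\Delta=\sqrt\kap$, and any $x\in B(y^*,\kap')$ lies inside $B(0,t')$, so the density hypothesis furnishes $x'\in E(\e,\eps,1)$ with $|x-x'|\le\kap$; you then recover the sharp $C_1\kap^{\alpha/2}$ and conclude with $|y-x|\le 2\kap'$ at the mild cost of doubling $C_6$.
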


The proof is essentially a combination of Lemma \ref{twodiffs} with Chebyshev's inequality.  The combination of these two simple tools tells us that the Lebesgue measure of the set where $\sup_{\delta\in (0,\sqrt{\kap})}| \Tbar_{\mu, \delta}(1)(x) - \Tbar_{\mu, \sqrt{\kap}}(1)(x)|$ is noticeable in $B(0,t-\sqrt{\kap})$ is controlled in terms of the measure $\mu(B(0,t))$.  On the other hand, much like in the proof of Lemma \ref{decaylem}, the abundancy hypothesis in terms of $\kap$ ensures that $\Re[\e\cdot\Tbar_{\mu, \sqrt{\kap}}(1)]$ is never much smaller than $\eps$ in $B(0,t-\sqrt{\kap})$.  Combining the two facts with a decent choice of parameters shows that $E(\e,\eps',1)$ has almost full Lebesgue measure in $B(0,t-\sqrt{\kap})$, and therefore must be very abundant.

\begin{proof}For any $x\in B(0,t')$, there exists $x'\in E(\e,\eps, 1)$ such that  $|x-x'|\leq\kap$.
By writing $\e\cdot\Tbar_{\mu,\delta}(1)(x) = \e\cdot\Tbar_{\mu,\delta}(1)(x')+  \e\cdot[\Tbar_{\mu,\delta}(1)(x) - \Tbar_{\mu,\delta}(1)(x')]$, we see from (\ref{kapholder}) that \begin{equation}\begin{split}\label{denslowbd} \Re[\e\cdot\Tbar_{\mu,\delta}(1)](x) > \eps-C_1\kap^{\tfrac{\alpha}{2}}=\eps'+C_1\sqrt{m},
\end{split}\end{equation}
for any $\delta\in[\sqrt{\kap}, 1)$.

Set $F_{\delta,\sqrt{\kap}}(x) = \Tbar_{\mu, \delta}(1)(x) - \Tbar_{\mu, \sqrt{\kap}}(1)(x)$.  From (\ref{denslowbd}), we infer that if $\Re[\textbf{e}\cdot\Tbar_{\mu,\delta}(1)](x) <\eps'$ for some $x\in B(0,t')$ and $\delta\in (0,1)$, then $\delta<\sqrt{\kap}$ and $|F_{\delta,\sqrt{\kap}}(x)|>C_1\sqrt{m}$ (the second condition follows since $\Re[\e\cdot\Tbar_{\mu,\sqrt{\kap}}(1)](x)>\eps'+C_{1}\sqrt{m}$, and certainly $\Re[\e\cdot\Tbar_{\mu,\delta}(1)](x)\geq \Re[\e\cdot\Tbar_{\mu,\sqrt{\kap}}(1)](x)-|F_{\delta,\sqrt{\kap}}(x)|$).

Now note that Lemma \ref{twodiffs} yields
\begin{equation}\begin{split}\nonumber\int_{B(0, t - \sqrt{\kap})}\sup_{\delta\in (0,\sqrt{\kap})}|F_{\delta,\sqrt{\kap}}(x)|dm_d(x)&\leq C_5\mu(B(0, t)) \kap^{\tfrac{d-s}{2}}\leq C_5m.
\end{split}\end{equation}

Consequently, Chebyshev's inequality yields that
\begin{equation}\begin{split}\nonumber m_d\bigl(\bigl\{x\in B(0, t')\,: \sup_{\delta \in (0,\sqrt{\kap})}|F_{\delta,\sqrt{\kap}}(x)|>C_1\sqrt{m}\bigl\}\bigl) \leq \frac{2C_5}{C_1}\sqrt{m}.\end{split}\end{equation}

Now, fix $C_6  \geq \bigl(\tfrac{2^{d+1}C_5}{\omega_d C_1}\bigl)^{\tfrac{1}{d}}$, where $\omega_d$ denotes the volume of the $d$-dimensional unit ball. Then the set $E(\e,\eps', t')\cap B(0,t')$ is $\kap^{\,\prime}$-dense in $B(0, t')$ as long as $\kap^{\,\prime}<\tfrac{1}{4}$.  Indeed, $m_d(B(0,t')\backslash E(\e, \eps', t'))<\omega_d\bigl(\tfrac{\kap^{\,\prime}}{2}\bigl)^d$.  But, if for some $x\in B(0,t')$, the distance from $x$ to $E(\e, \eps', t')\cap B(0,t')$ is greater than $\kap^{\,\prime}$, then there is a ball of radius $\tfrac{\kap^{\,\prime}}{2}$ that is contained in $B(0,t')$ but disjoint from $E(\e,\eps',t')$.  The existence of this ball is in contradiction with the measure estimate.  If $\kap^{\,\prime}\geq \tfrac{1}{4}$, then $\kap^{\,\prime}\geq \kap$, so there is nothing to prove.
\end{proof}

We now combine Lemmas \ref{decaylem} and \ref{densinclem} to prove the Collapse Lemma.  Before giving the formal proof we outline the idea.   Note that $\eps>0$ is fixed and the starting measure $\mu(B(0,2))\leq \Lambda 2^s$.  Our only freedom is in the choice of the starting value of $\kap>0$.

We will iterate Lemma \ref{decaylem} first to reduce the measure $\mu(B(0,t))$ to a sufficiently small value $m_0>0$.  Regardless of the choice of $\kap$, this will require $N\approx \tfrac{1}{\eps}\log(\tfrac{1}{m_0})$ steps as long as the radius does not collapse, which can be ensured by choosing $\kap$ so small that $N\sqrt{\kap}<\tfrac{1}{2}$ in addition to the requirements of Lemma \ref{decaylem}.

Once the measure is small, we start iterating Lemma \ref{densinclem} alternatingly with Lemma \ref{decaylem}.  In this case,  $\eps>0$ starts to decay as well from its initial value.  The dynamics of the parameters $t$, $m=\mu(B(0,t))$, and $\eps$, that arises is
$$t_{j+1}=t_j - \sqrt{\kap_j},\; m_{j+1}=(1-c_4\eps_j)m_j,\text{ and }\eps_{j+1} = \eps_j-C_1[\kap_j^{\tfrac{\alpha}{2}}+\sqrt{m_j}],
$$
with $\kap$ related to $m$ by $\kap_j = C_6m_{j-1}^{\tfrac{1}{2d}}$.

Our main task is to be able to make infinitely many steps while $t_j$ stays above $1$, and $\eps_j$ stays above $\tfrac{\eps}{2}$, say.  Under these conditions, $m_j \leq (1-\tfrac{c_4}{2}\eps)^jm_0$, so the quantities responsible for the deterioration of $t_j$ and $\eps_j$ from step to step have a fixed geometric decay and a factor of $m_0$ in them.   Thus, if $m_0>0$ is chosen small enough, the sum of these quantities after arbitrarily many steps during which $\eps_j>\tfrac{\eps}{2}$ and $t_j>1$ will be  very small too, which will allow us to always make the next step without breaking through the corresponding barriers.  We now turn to the details.

\begin{proof}[Proof of Proposition \ref{collem}]  Fix $m_0 >0$ to be chosen later, and suppose that $\mu(B(0,\tfrac{3}{2}))\leq m_0$.  Set $t_0 = \tfrac{3}{2}$, $\kap_0 =\kap$, and $\eps_0=\eps$.  Then $E(\e, \eps_0, 1)$ is $\kap_0$-dense in $B(0,t_0)$ by the hypotheses of  Proposition \ref{collem} (provided that $c_9$ is chosen to be less than $1$).
For $j\geq 1$, set
$$\eps_{j} =\eps_0 -\sum_{\ell=0}^{j-1}C_1\bigl[\kap_{\ell}^{\tfrac{\alpha}{2}}+\sqrt{m_{\ell}}\bigl],\; \kap_j = C_{6}m_{j-1}^{\tfrac{1}{2d}},\; t_j = t_0 - \sum_{\ell=0}^{j-1}\sqrt{\kap_{\ell}},
$$
and $m_j = (1-\tfrac{\lambda}{2})m_{j-1}$, with $\lambda=c_4\eps$ as in Lemma \ref{decaylem}.

Suppose that for some $j\geq 0$, $E(\e, \eps_j, 1)$ is $\kap_j$-dense in $B(0,t_j)$, and also that $\mu(B(0,t_j))\leq m_j$.   If \begin{equation}\label{epsjkapj}\eps_j \geq \frac{\eps}{2}, \;\; 2C_1\kap_j^{\tfrac{\alpha}{2}}\leq \frac{\eps}{2},\; \text{ and }t_j>1,\end{equation}then $2C_1\kap_j^{\tfrac{\alpha}{2}}\leq \eps_j$, and Lemma \ref{decaylem} yields $\mu(B(0,t_{j+1}))\leq (1-c_4\eps_j)m_j$. But since $\eps_j\geq \tfrac{\eps}{2}$, we have $c_4\eps_j \geq \tfrac{\lambda}{2}$, and so $\mu(B(0,t_{j+1}))\leq m_{j+1}$.  

On the other hand, Lemma \ref{densinclem} ensures that $E(\e, \eps_{j+1},1)$ is $\kap_{j+1}$-dense in $B(0,t_{j+1})$.

Bringing these two observations together, we see that if (\ref{epsjkapj}) holds for each $j\geq 0$, then $$\mu(B(0,t_{j}))\leq \Bigl(1-\frac{\lambda}{2}\Bigl)^jm_0 \text{ for every }j\geq 0,$$
and so $\mu(B(0,1))=0$, which is the desired conclusion of the Collapse Lemma.

We shall now make a choice of parameters to ensure that (\ref{epsjkapj}) is valid.  Our requirements that $\eps_j \geq \tfrac{\eps}{2}$ and  $2C_1\kap_j^{\tfrac{\alpha}{2}}\leq\tfrac{\eps}{2}$ for every $j$ will be satisfied if
$$C_1\kap^{\tfrac{\alpha}{2}}+\sum_{\ell=0}^{\infty}C_1\Bigl[C_6^{\tfrac{\alpha}{2}}\Bigl(1-\frac{\lambda}{2}\Bigl)^{\tfrac{\alpha \ell}{4d}}m_0^{\tfrac{\alpha}{4d}}+\Bigl(1-\frac{\lambda}{2}\Bigl)^{\tfrac{\ell}{2}}\sqrt{m_0}\Bigl]<\frac{\eps}{4} \text{ and }C_1C_6m_0^{\tfrac{1}{2d}}<\frac{\eps}{4}.
$$
On the other hand, $t_j>1$ for all $j\geq 1$ if $$\sum_{\ell=0}^{\infty}\sqrt{C_6}\Bigl(1-\frac{\lambda}{2}\Bigl)^{\tfrac{\ell}{4d}}m_0^{\tfrac{1}{4d}}<\frac{1}{2}.
$$

Notice that $\sum_{\ell=0}^{\infty}\bigl(1-\tfrac{\lambda}{2}\bigl)^{\tfrac{\alpha \ell}{4d}}\leq \tfrac{C}{\lambda}\leq \tfrac{C}{\eps}$.  Therefore, if we choose $m_0 = c_{7}\eps^{\gamma}$ for suitable constants $c_{7}>0$ and $\gamma = \gamma(d,s,\alpha)>0$, then the inequalities comprising (\ref{epsjkapj}) are satisfied provided that $\kap<\bigl(\tfrac{\eps}{4C_1}\bigl)^{\tfrac{2}{\alpha}}$.

It remains to ensure that $\mu(B(0,t_0)) = \mu(B(0,\tfrac{3}{2}))\leq m_0$.   Fix $N\in \mathbb{N}$.  
A repeated application of Lemma \ref{decaylem}  yields
\begin{equation}\nonumber\begin{split} \mu(B(0, 2-N\sqrt{\kap})) \leq (1-\lambda)^N\mu(B(0,2))\leq (1-\lambda)^N\Lambda2^s.
\end{split}\end{equation}
If $N\sqrt{\kap}<\tfrac{1}{2}$, then $\mu(B(0,\tfrac{3}{2}))\leq (1-\lambda)^N\Lambda2^s.$  Thus, it suffices to ensure that $(1-\lambda)^N \leq \tfrac{m_0}{\Lambda 2^s}$. This condition dictates our choice of $N$ as  $N= \lfloor C_{8}\tfrac{\log\tfrac{1}{\eps}}{\eps}\rfloor+1.$ All that is left is to choose $\kap(\eps)$.  The two assumptions we need to satisfy are
$$\kap(\eps)<\Bigl(\frac{\eps}{4C_6}\Bigl)^{\tfrac{2}{\alpha}}, \text{ and }\kap(\eps)<\frac{\eps^2}{4 C_{8}^2\log^2\tfrac{1}{\eps}} \Bigl(\approx \frac{1}{4N^2}\Bigl).
$$
So  we can choose $\kap(\eps)=c_{9}\eps^{\beta}$, for suitable $c_{9}>0$ and $\beta=\beta(s,\alpha)>0$.\end{proof}

\subsection{Consequences of the Collapse Lemma}

The remainder of the section is devoted to consequences of the Collapse lemma.  Again, fix $\mu$ to be a non-trivial $\Lambda$-nice reflectionless measure.  We begin with a simple alternative:

\begin{lem}\label{colalt}  For each $\eps>0$, there exist $M=M(\eps)>0$ and $\tau = \tau(\eps)>0$, such that whenever $|\Tbar_{\mu, Mr}(1)(x)|>\eps$ for some $x\in \mathbb{R}^d$ and $r>0$, one of the following two statements must hold:

(i) $\mu(B(x,2Mr))\geq \tau r^s$, or

(ii)  $\mu(B(x,r))=0$.
\end{lem}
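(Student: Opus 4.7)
The plan is to show that failure of option (i) forces option (ii) by way of the Collapse Lemma (Proposition \ref{collem}). Given $|\Tbar_{\mu,Mr}(1)(x)|>\eps$, I first pick a unit vector $\e\in \mathbb{C}^{d'}$ (concretely, $\e=\overline{z}/|z|$ where $z=\Tbar_{\mu,Mr}(1)(x)$) so that $\Re[\e\cdot\Tbar_{\mu,Mr}(1)(x)]=|z|>\eps$. I will apply the Collapse Lemma at scale $r$ to this vector $\e$ with a reduced size parameter $\eps'=\eps/2$, so the task reduces to showing that $E(\e,\eps/2,r)$ is $\kap r$-dense in $B(x,2r)$, where $\kap=c_{9}(\eps/2)^{\beta}$.

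The first step is to propagate the lower bound on $\Re[\e\cdot\Tbar_{\mu,Mr}(1)]$ from $x$ to all of $B(x,2r)$. By Lemma \ref{Lipest}, for every $y\in B(x,2r)$,
$$|\Tbar_{\mu,Mr}(1)(y)-\Tbar_{\mu,Mr}(1)(x)|\leq C_{1}(2/M)^{\alpha}.$$
Choosing $M=M(\eps)$ large enough that $C_{1}(2/M)^{\alpha}\leq \eps/4$ (so $M\sim \eps^{-1/\alpha}$) yields $\Re[\e\cdot\Tbar_{\mu,Mr}(1)(y)]\geq 3\eps/4$ throughout $B(x,2r)$.

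Next, to pass from the single scale $Mr$ to the full range $\delta\in(0,Mr)$ in the definition of $E$, I apply Lemma \ref{twodiffs} with $\Delta=Mr$ and $\varphi\equiv 1$ on the ball $B(x,2r)$: assuming $M\geq 2$,
$$\int_{B(x,2r)}\sup_{\delta\in(0,Mr)}|F_{\delta,Mr}(y)|\,dm_{d}(y)\leq C(2r)^{d-s}\mu(B(x,(M+2)r))\leq C'r^{d-s}\mu(B(x,2Mr)).$$
Now assume option (i) fails, i.e., $\mu(B(x,2Mr))<\tau r^{s}$. Then this integral is bounded by $C'\tau r^{d}$, and Chebyshev's inequality shows that the exceptional set $\mathcal{B}\subset B(x,2r)$ where $\sup_{\delta\in(0,Mr)}|F_{\delta,Mr}|>\eps/4$ satisfies $m_{d}(\mathcal{B})\leq 4C'\tau r^{d}/\eps$. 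Every point $y\in B(x,2r)\setminus \mathcal{B}$ obeys $\Re[\e\cdot\Tbar_{\mu,\delta}(1)(y)]\geq 3\eps/4-\eps/4=\eps/2$ for all $\delta\in(0,Mr)$, and in particular $y\in E(\e,\eps/2,r)$.

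Finally I would choose $\tau=\tau(\eps)$ small enough that $4C'\tau r^{d}/\eps < \omega_{d}(\kap r/2)^{d}$, i.e.\ $\tau \lesssim \eps \kap^{d}\sim \eps^{1+d\beta}$. With this choice, $B(x,2r)\setminus\mathcal{B}$ occupies more volume than the complement of any maximal $(\kap r/2)$-net, so $E(\e,\eps/2,r)$ is $\kap r$-dense in $B(x,2r)$ by the same volume-packing argument used at the end of the proof of Lemma \ref{densinclem} (taking care at the boundary of $B(x,2r)$ by projecting radially inward, which at worst doubles the distance). The Collapse Lemma then gives $\mu(B(x,r))=0$. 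The main technical nuisance is the simultaneous bookkeeping: $M$ has to grow like $\eps^{-1/\alpha}$ to defeat the Hölder term, while $\tau$ must shrink like $\eps^{1+d\beta}$ to defeat $\kap^{d}$; one must check that Lemma \ref{twodiffs}'s factor $\min(Mr,2r)^{d-s}$ can indeed be absorbed, which is where the choice $M\geq 2$ enters.
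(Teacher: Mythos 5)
Your proposal is correct and follows essentially the same path as the paper's proof: choose $\e$ to align with $\Tbar_{\mu,Mr}(1)(x)$, propagate the lower bound over $B(x,2r)$ via the H\"older continuity of $\Tbar_{\mu,Mr}(1)$ (Lemma \ref{Lipest}) after making $M\sim\eps^{-1/\alpha}$, control $\sup_{\delta}|F_{\delta,Mr}|$ in $L^1(m_d)$ by Lemma \ref{twodiffs} plus Chebyshev, conclude $\kap r$-density of $E(\e,\eps/2,r)$ by a volume-packing argument, and feed this into the Collapse Lemma, which forces $\tau\sim\eps\kap^d$. The only minor bookkeeping point you leave implicit (and the paper handles by writing $\kap(\min[\eps/2,1/2])$) is that the Collapse Lemma is stated for $\eps\in(0,1/2)$, so for $\eps\geq 1$ one should either cap $\eps'$ at a value below $1/2$ or note that the case $\eps\geq 1$ follows trivially from a smaller $\eps$.
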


\begin{proof}  We may assume that $x=0$ and $r=1$.  Fix $\tau>0$, and $M>4$.  Suppose that $\mu(B(0,2M))\leq\tau$.  For $\delta>0$, set $F_{\delta,M} = \Tbar_{\mu, \delta}(1) - \Tbar_{\mu, M}(1)$.  Then by Lemma \ref{twodiffs},$$\int_{B(0,2)} \sup_{\delta\in (0,M)} \bigl|F_{\delta,M}(y)\bigl| dm_d(y) \leq C_{10} \tau.
$$
Consequently, the Chebyshev inequality ensures that the set $E = \bigl\{y\in B(0,2): \sup_{\delta\in(0,M)} |F_{\delta,M}(y)|<\tfrac{\eps}{4}\bigl\}$ is $C_{11}\bigl(\tfrac{\tau}{\eps}\bigl)^{\tfrac{1}{d}}$-dense in $B(0,2)$ (cf. the proof of Lemma \ref{densinclem}).

Set $\e$ to be the unit vector satisfying $\e\cdot \Tbar_{\mu, M}(1)(0) = |\Tbar_{\mu, M}(1)(0)|$.  Suppose $y\in E$, and $\delta\in (0,1)$.  Write
$$\Tbar_{\mu,\delta}(1)(y) = \Tbar_{\mu,M}(1)(0) + F_{\delta,M}(y) + [\Tbar_{\mu,M}(1)(y)- \Tbar_{\mu,M}(1)(0)].
$$
Since $|\Tbar_{\mu,M}(1)(y)- \Tbar_{\mu,M}(1)(0)|\leq\tfrac{2C_1}{M^{\alpha}}$, we infer from the above equality that
$\Re [\e\cdot\Tbar_{\mu, \delta}(1)](y)>\tfrac{3\eps}{4} - \frac{2C_1}{M^{\alpha}}.$  This quantity is at least $\tfrac{\eps}{2}$ if $M\geq M(\eps)= \bigl(\tfrac{8C_1}{\eps}\bigl)^{\tfrac{1}{\alpha}}$.   If $C_{11}\bigl(\tfrac{\tau}{\eps}\bigl)^{1/d}\leq \kap\bigl(\min\bigl[\tfrac{\eps}{2},\tfrac{1}{2}\bigl]\bigl)$, the Collapse Lemma implies that $\mu(B(0,1))=0$.  Thus, the alternative holds with $\tau=c_{12}\eps\kap^d$ for a suitable constant $c_{12}>0$.
\end{proof}

\begin{cor}\label{collapsecor1}  For each $\eps\in(0,\tfrac{1}{2})$, there exist $M'=M'(\eps)>0$ and $\tau=\tau'(\eps)>0$, such that if $|\Tbar_{\mu}(1)(x)|>\eps$, and $\dist(x, \supp(\mu))=r$, then $\mu(B(x,M'r))\geq \tau' r^s.$\end{cor}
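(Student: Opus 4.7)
The plan is to argue by contradiction using the Collapse Lemma. By scale-invariance (the change-of-variables discussion preceding the definition of a nice measure in Section \ref{nicesection}), we may reduce to $x=0$, $r=1$, so that $\dist(0, \supp\mu) = 1$ and $|\Tbar_\mu(1)(0)| > \eps$. We will choose $\Delta_0 = (16 C_1/\eps)^{1/\alpha}$, large enough that Lemma \ref{Lipest} gives oscillation at most $\eps/4$ across $B(0,4)$, and set $\tau' = c\eps^{1+\beta d}$, with $\beta$ from Proposition \ref{collem} and $c > 0$ a small constant to be fixed. We shall suppose for contradiction that $\mu(B(0, 2\Delta_0)) \leq \tau'$.

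The crucial step will be to control the correction term
\[ \Tbar_\mu(1)(0) - \Tbar_{\mu, \Delta_0}(1)(0) = \langle T^{\Delta_0} \delta_0, 1\rangle_\mu. \]
Because $\supp\mu \cap B(0,1) = \emptyset$, the integrand is supported in the annulus $B(0,\Delta_0) \setminus B(0,1)$. The main obstacle of the proof is that the naive $\Lambda$-nice bound from a dyadic-shell decomposition would yield a logarithmic estimate $\Lambda \log \Delta_0$ which exceeds $\eps$ for small $\eps$. The working hypothesis circumvents this: each dyadic shell $A_j = \{2^j \leq |y| < 2^{j+1}\}$ has mass at most $\tau'$, producing an absolutely convergent geometric sum bounded by $C\tau' < \eps/4$. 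We will thus conclude $|\Tbar_{\mu, \Delta_0}(1)(0)| > 3\eps/4$, and pick a unit vector $\e$ with $\Re[\e \cdot \Tbar_{\mu, \Delta_0}(1)(0)] > 3\eps/4$.

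Next we propagate this estimate to a dense subset of $B(0,4)$. Lemma \ref{Lipest} spreads the bound to $\Re[\e \cdot \Tbar_{\mu, \Delta_0}(1)(y)] > \eps/2$ on $B(0,4)$, and Lemma \ref{twodiffs} (applied with $R = 4$, $\Delta = \Delta_0$) combined with Chebyshev's inequality produces an exceptional set $\mathcal{E} \subset B(0,4)$ with $m_d(\mathcal{E}) \leq C\tau'/\eps$ outside of which $\sup_{\delta \in (0,\Delta_0)} |\Tbar_{\mu, \delta}(1)(y) - \Tbar_{\mu, \Delta_0}(1)(y)| \leq \eps/4$. Combining, we get $E(\e, \eps/4, 2) \supseteq E(\e, \eps/4, \Delta_0) \supseteq B(0,4) \setminus \mathcal{E}$; taking $c$ small enough forces $m_d(\mathcal{E}) < \omega_d (2\kap(\eps/4))^d$, so that $E(\e, \eps/4, 2)$ is $2\kap(\eps/4)$-dense in $B(0,4)$. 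The Collapse Lemma applied with $x_0 = 0$, $r = 2$, and parameter $\eps/4$ then gives $\mu(B(0,2)) = 0$, which contradicts the fact that a point of $\supp\mu$ at distance $1$ from $0$ forces $\mu(B(0,2)) > 0$. This establishes $\mu(B(0, 2\Delta_0)) > \tau'$, and rescaling yields the corollary with $M'(\eps) = 2\Delta_0$ and $\tau'(\eps) = c\eps^{1+\beta d}$.
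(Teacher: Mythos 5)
Your argument is correct and is essentially the paper's own proof unrolled. The paper gets there in three lines by applying the already-established Lemma \ref{colalt} at $x=0$, $r=2$, $\eps$ replaced by $\eps/2$: with $M=M(\eps/2)$ and $M'=4M$, one bounds $|\Tbar_{\mu}(1)(0)-\Tbar_{\mu,2M}(1)(0)|=|\langle T^{2M}\delta_0,1\rangle_\mu|\leq C\mu(B(0,2M))$, and since $\dist(0,\supp\mu)=1$ forces $\mu(B(0,2))>0$, alternative (i) of the lemma gives $\mu(B(0,4M))\geq\tau(\eps/2)2^s$. What you have done is re-derive Lemma \ref{colalt} inline (Lipschitz propagation of $\Tbar_{\mu,\Delta_0}(1)$, Lemma \ref{twodiffs} plus Chebyshev to get a $\kap$-dense set, then the Collapse Lemma); this is the same machinery, just not packaged. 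One expository point: the dyadic-shell decomposition and the ``logarithmic obstacle'' discussion for the correction term are an unnecessary detour --- since $\supp\mu\cap B(0,1)=\varnothing$, the kernel $K^{\Delta_0}(y)$ satisfies $|K^{\Delta_0}(y)|\leq 1/|y|^s\leq 1$ on the region of integration, so $|\langle T^{\Delta_0}\delta_0,1\rangle_\mu|\leq\mu(B(0,\Delta_0))\leq\tau'$ immediately, with no geometric series needed; this is the estimate the paper uses (with $B(0,1/2)$ in place of $B(0,1)$, hence the harmless constant $C=2^s$).
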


\begin{proof}  Without loss of generality, we may assume that $r=1$ and $x=0$.   Set $M=M\bigl(\tfrac{\eps}{2}\bigl)$ as in Lemma \ref{colalt}, and fix $M'=4M$.   By a trivial absolute value estimate, $|\Tbar_{\mu, 2M}(1)(0)|>\eps-\int_{B(0,2M)\backslash B(0,\tfrac{1}{2})}\tfrac{1}{|y|^s}\,d\mu(y)>\eps-C\mu(B(0, 2M))$.  So $|\Tbar_{\mu, 2M}(1)(0)|>\tfrac{\eps}{2}$ if $\mu(B(0,2M))\leq \sigma= c\eps$ for a sufficiently small constant $c>0$.  But now the assumptions of Lemma \ref{colalt} are satisfied at the point $x=0$, radius $r=2$, and with $\eps$ replaced by $\tfrac{\eps}{2}$.  By hypothesis $\mu(B(0,2))>0$, so $\mu(B(0,4M))>\tau$, where $\tau=\tau\bigl(\tfrac{\eps}{2}\bigl)$ is given by Lemma \ref{colalt}.  Setting $\tau' = \min\bigl[\sigma, \tau\bigl]$ completes the proof.
\end{proof}

\begin{cor}\label{lebcor}  $\Tbar_{\mu}(1)(x)=0$ for $m_d$-almost every $x\in \supp(\mu)$.
\end{cor}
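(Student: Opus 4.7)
The plan is to argue by contradiction. Suppose that $\Tbar_{\mu}(1)(x)\ne 0$ on a subset of $\supp(\mu)$ of positive $m_d$-measure. Since $\Tbar_{\mu}(1)$ is defined $m_d$-almost everywhere, by exhausting over $\eps = 1/n$, there must exist some $\eps>0$ for which the set
$$A_{\eps}=\bigl\{x\in \supp(\mu)\,:\,\delta_x\in\MBD(\mu)\text{ and }|\Tbar_{\mu}(1)(x)|>2\eps\bigl\}$$
satisfies $m_d(A_{\eps})>0$. The goal is to show this is impossible.

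My first step would be to show that for every $x\in A_{\eps}$, $\Tbar_{\mu,\delta}(1)(x)\to \Tbar_{\mu}(1)(x)$ as $\delta\to 0$. By the identity (\ref{deltprodrep2}), this reduces to showing $\langle T^{\delta}[\delta_x],1\rangle_{\mu}\to 0$ as $\delta\to 0$. Since the kernel $K^{\delta}$ is supported in $\overline{B(0,\delta)}$ and $|K^{\delta}(z)|\leq |z|^{-s}$, one has $|\langle T^{\delta}[\delta_x],1\rangle_{\mu}|\leq \int_{B(x,\delta)}\tfrac{1}{|y-x|^s}\,d\mu(y)$, and the hypothesis $\delta_x\in \MBD(\mu)$ combined with dominated convergence gives the desired decay. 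Consequently, for each $x\in A_{\eps}$ there exists $\delta_x>0$ with $|\Tbar_{\mu,\delta}(1)(x)|>\eps$ for all $\delta\in(0,\delta_x)$. Now set $M=M(\eps)$ and $\tau=\tau(\eps)$ as in Lemma \ref{colalt}. For every $r\in(0,\delta_x/M)$, the hypothesis $|\Tbar_{\mu,Mr}(1)(x)|>\eps$ is met, and since $x\in\supp(\mu)$ forces $\mu(B(x,r))>0$, alternative (ii) of Lemma \ref{colalt} fails, so alternative (i) must hold: $\mu(B(x,2Mr))\geq \tau r^s$. Equivalently, writing $R=2Mr$,
$$\liminf_{R\to 0}\frac{\mu(B(x,R))}{R^s}\geq \frac{\tau}{(2M)^s}>0\quad\text{for every }x\in A_{\eps}.$$

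The final step is to derive a contradiction from this uniform lower $s$-density on a set of positive $m_d$-measure. This is a standard consequence of a Vitali $5r$-covering argument: for any bounded Borel set $B$, a covering of $A_{\eps}\cap B$ by small balls on which $\mu(B(x,r))\geq c r^s$ produces the bound $\mathcal{H}^s(A_{\eps}\cap B)\leq C\mu(B_{\text{dilated}})/c<\infty$. Since $s<d$, finite $s$-dimensional Hausdorff measure implies $m_d$-measure zero, so $m_d(A_{\eps}\cap B)=0$ for every bounded $B$. This contradicts $m_d(A_{\eps})>0$.

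The main obstacle I anticipate is the first step: carefully justifying the pointwise convergence $\Tbar_{\mu,\delta}(1)(x)\to \Tbar_{\mu}(1)(x)$ for $m_d$-almost every $x$, with clean bookkeeping of which a.e.\ exceptional sets can be absorbed. Once that is in hand, the application of Lemma \ref{colalt} is immediate (using crucially that $x\in\supp(\mu)$ rules out the collapse alternative), and the final density-dimension step is standard.
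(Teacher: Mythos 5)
Your proposal is correct, and the key step --- establishing $|\Tbar_{\mu,Mr}(1)(x)|>\eps$ for small $r$ (via the decay of $\langle T^{\delta}[\delta_x],1\rangle_{\mu}$ coming from $\delta_x\in\MBD(\mu)$) and then feeding this into Lemma~\ref{colalt} --- is the same entry point the paper uses. Where you diverge is in which horn of the alternative you exploit. The paper invokes the Lebesgue density theorem: for $m_d$-a.e.\ $x$ the limit $D(x)=\lim_{r\to0}\mu(B(x,r))/r^d$ exists finitely, and since $d>s$ this gives $\mu(B(x,2Mr))\leq (D(x)+1)(2Mr)^d < \tau r^s$ for all sufficiently small $r$; thus alternative~(i) \emph{fails}, so~(ii) holds and $x\notin\supp(\mu)$ is the direct conclusion. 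You instead assume $x\in\supp(\mu)$ to rule out~(ii), force~(i), and hence obtain a positive lower $s$-density of $\mu$ on $A_\eps$, which you then feed into a Frostman-type Vitali covering argument to get $\mathcal{H}^s(A_\eps\cap B)<\infty$, whence $m_d(A_\eps\cap B)=0$ because $s<d$. Both arguments are valid and both hinge on $s<d$; the paper's is more self-contained (only the Lebesgue density theorem is needed, and there is no contradiction setup --- the implication is proved directly at each good $x$), while yours imports the Hausdorff/Lebesgue dimension comparison and a covering lemma, which is slightly more machinery but is cleanly organized around the fact that you are restricting to $\supp(\mu)$ from the start. The one thing to tighten in your write-up is the uniformity issue in the final covering step: since the radius threshold $\delta_x/M$ below which $\mu(B(x,2Mr))\geq\tau r^s$ depends on $x$, you should first decompose $A_\eps=\bigcup_N\{x\in A_\eps:\delta_x\geq 1/N\}$ and apply the covering argument on each piece, which is standard but worth saying.
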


\begin{proof}  By standard measure theory, the limit $D(x)=\lim_{r\to 0} \tfrac{\mu(B(x,r))}{r^d}$ exists and is finite for $m_d$-almost every $x\in \mathbb{R}^d$.  It therefore suffices to prove that if $|\Tbar_{\mu}(1)(x)|>2\eps$ for some $\eps>0$, and $D(x)$ exists and is finite, then $x\not\in \supp(\mu)$.  Set $M=M(\eps)$, and $\tau=\tau(\eps)$, as in Lemma \ref{colalt}.  If $D(x)<\infty$, then $\mu(B(x,r))\leq (D(x)+1)r^d$ for all sufficiently small $r$.   Thus $\delta_x\in \MBD(\mu)$, and moreover provided that $r$ is sufficiently small, $\int_{B(x,Mr)}\tfrac{1}{|y-x|^s}\,d\mu(y) \leq C[D(x)+1](Mr)^{d-s}\leq \eps$.  But then both $|\Tbar_{\mu, Mr}(1)(x)|>\eps$ and $\mu(B(x,2Mr))\leq (D(x)+1)r^d\leq \tau r^s$ for small enough $r$.  From Lemma \ref{colalt}, we infer that $\mu(B(x,r))=0$.   So $x\not\in \supp(\mu)$.
\end{proof}

\subsection{Porosity}The final result of this section is a porosity property in balls where $\Tbar_{\mu}(1)$ is large on average.  This will serve as the primary tool in showing that the support of a reflectionless measure for the $s$-Riesz transform is nowhere dense, which shall be proved in Part II.

\begin{lem}\label{intpor}  For each $\eps>0$, there exists $\lambda=\lambda(\eps)>0$, such that if $\int_{B(x,r)}|\Tbar_{\mu}(1)(y)| dm_d(y)>\eps m_d(B(x,r))$, then there is a ball $B'\subset B(x,r)$ of radius $\lambda r$ with $\mu(B')=0$.
\end{lem}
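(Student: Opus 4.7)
Plan.  By considering $\tfrac{\mu(x + r\,\cdot\,)}{r^s}$ in place of $\mu$ we may assume $x = 0$ and $r = 1$.  My plan is to find a subball $B(y_0, r_0) \subset B(0, 1)$ of radius $r_0$ comparable to $\lambda$, a unit vector $\e$ and a quantity $\eps' \approx \eps$ such that the set $E(\e, \eps', r_0/2)$ from the Collapse Lemma is $\kap r_0/2$-dense in $B(y_0, r_0)$ with $\kap = \kap(\eps')$; Proposition \ref{collem} will then yield $\mu(B(y_0, r_0/2)) = 0$, and I will take $\lambda = r_0/2$.

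A pigeonhole over a $\tfrac{1}{2}$-net in the unit sphere of $\mathbb{C}^{d'}$ first produces a direction $\e$ with $\int_{B(0, 1)} (\Re[\e \cdot \Tbar_\mu(1)])_+\,dm_d \geq c_{d'}\eps\,m_d(B(0, 1))$.  Using the pointwise identity (\ref{deltprodrep2}) together with Lemma \ref{locl1}, $\int_{B(0,1)} |\Tbar_\mu(1) - \Tbar_{\mu, \delta}(1)|\,dm_d \leq C\Lambda\delta^{d-s}$, so for a sufficiently small $\delta$ the lower bound persists with $\Tbar_\mu(1)$ replaced by its H\"older regularisation $\Tbar_{\mu, \delta}(1)$.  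A Vitali-type pigeonhole over a cover of $B(0, 1)$ by balls of radius $r_0$ then produces a ball $B(y_0, r_0) \subset B(0, 1 - r_0)$ on which the average of $\Re[\e \cdot \Tbar_{\mu, \delta}(1)]$ exceeds $c\eps$.  By Lemma \ref{Lipest}, if $\delta$ is taken large enough relative to $r_0$ that the oscillation bound $C_1 (r_0/\delta)^\alpha$ is at most $c\eps/2$, then $\Re[\e \cdot \Tbar_{\mu, \delta}(1)] \geq c\eps/2$ pointwise on $B(y_0, r_0)$.

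To pass from the fixed scale $\delta$ to the uniform control over all $\delta' \in (0, r_0/2)$ demanded by $E(\e, c\eps/4, r_0/2)$, I will invoke Lemma \ref{twodiffs} to bound $\int_{B(y_0, r_0)} \sup_{\delta' \in (0, \delta)}|F_{\delta',\delta}|\,dm_d$, followed by Chebyshev (as in the proof of Lemma \ref{colalt}) to estimate the Lebesgue measure of the set where $\sup_{\delta'} |F_{\delta',\delta}| > c\eps/4$.  With $\delta$ and $r_0$ chosen as appropriate positive powers of $\eps$, and keeping in view the abundancy threshold $\kap(c\eps/4) = c_9(c\eps/4)^\beta$ from Proposition \ref{collem}, this bad set is too small to contain any ball of radius $\kap r_0/2$, and so its complement, which is contained in $E(\e, c\eps/4, r_0/2)$, is $\kap r_0/2$-dense in $B(y_0, r_0)$.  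The Collapse Lemma applied at $y_0$ with radius $r_0/2$ then produces $\mu(B(y_0, r_0/2)) = 0$, giving $\lambda = r_0/2$.  The hard part is the final parameter balancing: the H\"older-oscillation requirement forces $\delta$ upward relative to $r_0$, while the Chebyshev/$\kap$-density requirement forces $\delta$ downward, and reconciling these competing constraints against the smallness of $\kap(\eps)$ is what dictates the final dependence $\lambda = \lambda(\eps)$.
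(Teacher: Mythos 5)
The proposal takes a genuinely different route from the paper but contains a gap that I do not believe can be closed: the parameter balancing you identify as ``the hard part'' is in fact irreconcilable.

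To apply Proposition \ref{collem} at the ball $B(y_0,r_0/2)$ you need $E(\e,c\eps/4,r_0/2)$ to be $\kap\, r_0/2$-dense in $B(y_0,r_0)$, with $\kap=\kap(c\eps/4)\approx \eps^{\beta}$. The H\"{o}lder step (Lemma \ref{Lipest}) forces $C_1(r_0/\delta)^{\alpha}\lesssim\eps$, i.e.\ $\delta\gtrsim r_0\,\eps^{-1/\alpha}$, so $\delta\gg r_0$. With such a $\delta$, Lemma \ref{twodiffs} gives
$\int_{B(y_0,r_0)}\sup_{\delta'<\delta}|F_{\delta',\delta}|\,dm_d \leq C\min(\delta,r_0)^{d-s}\mu\bigl(B(y_0,r_0+\delta)\bigr)\lesssim \Lambda\, r_0^{d-s}\delta^{s}$,
and then Chebyshev bounds the bad set by a quantity $\gtrsim r_0^{d-s}\delta^{s}/\eps$. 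For the complement to be $\kap\,r_0/2$-dense, you need this to be $\lesssim(\kap r_0)^{d}\approx\eps^{\beta d}r_0^{d}$, which requires $\delta\lesssim\eps^{(1+\beta d)/s}r_0\ll r_0$. So the H\"{o}lder constraint forces $\delta/r_0\gtrsim\eps^{-1/\alpha}\to\infty$ while the Chebyshev/$\kap$-density constraint forces $\delta/r_0\lesssim\eps^{(1+\beta d)/s}\to 0$; there is no choice of $\delta$ and $r_0$ as powers of $\eps$ satisfying both. The underlying obstruction is that without a smallness hypothesis on $\mu(B(y_0,r_0+\delta))$, the integral of $\sup_{\delta'}|F_{\delta',\delta}|$ over $B(y_0,r_0)$ is of the order of the full volume $r_0^{d}$, so Chebyshev cannot produce a bad set that is a small fraction of $B(y_0,r_0)$, let alone a fraction of order $\kap^{d}$.

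The paper gets around this precisely by not attempting to verify the abundance hypothesis of the Collapse Lemma directly. Instead it argues by contradiction via Lemma \ref{colalt} (in whose proof the Chebyshev step is carried out under the standing hypothesis $\mu(B(0,2M))\leq\tau$, which is what makes it work there). One assumes $\mu(B_j)>0$ for every ball $B_j=B(x_j,\gamma)$ in a cover of $B(0,1)$; Lemma \ref{colalt} then forces $\mu(2MB_j)\gtrsim\tau\gamma^s$ whenever $|\Tbar_{\mu,M\gamma}(1)(x_j)|>\eps/4$, and the $\Lambda$-niceness caps the number of such $j$. The resulting Lebesgue-measure bound on $\{|\Tbar_{\mu,M\gamma}(1)|>\eps/4+C_1/M^\alpha\}$ is only a positive power of $\gamma$ (no $\eps^{\beta d}$ threshold needed), and a crude logarithmic sup-bound $|\Tbar_{\mu,M\gamma}(1)|\lesssim 1+\log\tfrac{1}{M\gamma}$ absorbs the error. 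The lower bound on $\int_{B(0,1)}|\Tbar_{\mu,M\gamma}(1)|$ is then contradicted for $\gamma$ small. Your sketch has no analogue of this contradiction-and-counting mechanism, nor of the logarithmic absolute-value bound, and these are exactly the ingredients that permit the paper to avoid the parameter incompatibility you ran into.
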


\begin{proof}  We may suppose that $x=0$ and $r=1$.  Furthermore, by increasing $\eps>0$ if necessary, we may assume that $\int_{B(0,1)}|\Tbar_{\mu}(1)(y)| dm_d(y)=\eps \omega_d$ (here $\omega_d$ is the volume of the $d$-dimensional unit ball).

Let $\gamma>0$.  To prove this lemma, we shall look to apply the alternative in Lemma \ref{colalt} to balls of radius $\gamma>0$.  To this end, we shall want to work with the function $\Tbar_{\mu, M\gamma}(1)$ for some $M>1$ to be chosen later.  First note that, whenever $\delta\in (0,1]$, inequality (\ref{localintest}) yields that
$$\int_{B(0,1)}|\langle T^{\delta}[\delta_x],1\rangle_{\mu}|dm_d(x)\leq C\delta^{d-s}\mu(B(0,2))\leq C\delta^{d-s},
$$
and so from identity (\ref{deltprodrep2}), we deduce that
\begin{equation}\begin{split}\label{localintest2}\eps \omega_d+ C\delta^{d-s}\geq\int_{B(0,1)}|\Tbar_{\mu, \delta}(1)(x)| dm_d(x)\geq\eps \omega_d- C\delta^{d-s}.
\end{split}\end{equation}

Consequently, as long as $M\gamma<\min(1,c_{13}\eps^{1/(d-s)})$ for a suitably chosen $c_{13}>0$, the second inequality in (\ref{localintest2}) yields that
\begin{equation}\label{Mgambig}\int_{B(0,1)} |\Tbar_{\mu, M\gamma}(1)(x)| dm_d(x)\geq \frac{\eps\omega_d}{2}.\end{equation}

Next, we shall derive a crude absolute value estimate for $\Tbar_{\mu, M\gamma}(1)$ in the ball $B(0,1)$.    To this end, note that from the first inequality in (\ref{localintest2}) with $\delta=1$, we see that there must be a point $x_0\in B(0,1)$ such that $|\Tbar_{\mu, 1}(1)(x_0)|\leq C\eps + C$.  But then  the H\"{o}lder continuity of $\Tbar_{\mu, 1}(1)$ (Lemma \ref{Lipest}) yields that $|\Tbar_{\mu, 1}(1)(x)|\leq C\eps+C$ for any $x\in B(0,2)$.  Now consider $F_{M\gamma,1}(x) = \Tbar_{\mu, M\gamma}(1)(x)-\Tbar_{\mu, 1}(1)(x)$.  Then (\ref{Fabsval}) yields that for any $x\in \R^d$,
\begin{equation}\begin{split}\nonumber|F_{M\gamma,1}(x)|&\leq \int_{B(x,1)}\frac{2}{(M\gamma+|x-y|)^s}d\mu(y)\\
&\leq C\frac{\mu(B(x,M\gamma))}{(\gamma M)^s}+C\int_{M\gamma}^1\frac{\mu(B(x,t))}{t^s}\frac{dt}{t}\\
&\leq C+C\log\bigl(\frac{1}{M\gamma}\bigl).
\end{split}\end{equation}
Bringing these observations together, we see that there is a constant $C_{14}>0$ such that
\begin{equation}\label{uniformtriv}|\Tbar_{\mu, M\gamma}(1)(x)|\leq C_{14}\bigl[ \eps+1+\log\bigl(\frac{1}{M\gamma}\bigl)\bigl] \text{ for every }x\in B(0,1).
\end{equation}

Now, take a maximal $\gamma$-separated set in $B(0,1)$.  Set $B_j = B(x_j, \gamma)$.  Then the balls $B_j$ form a cover of $B(0,1)$.  Furthermore, under our assumption that $M\gamma<1$, the enlarged balls $2MB_j = B(x_j, 2M\gamma)$ are contained in $B(0,3)$, and have covering number $C_{15}M^d$ (at most $C_{15}M^d$ balls $B(x_j, 2M\gamma)$ may contain any given point in $\mathbb{R}^d$).

With the aim of obtaining a contradiction, we suppose that $\mu(B_j)>0$ for all $j$.  Now introduce $\tau=\tau\bigl(\tfrac{\eps}{4}\bigl)>0$ and $M(\tfrac{\eps}{4})>0$ as in Lemma \ref{colalt}, and suppose that $M\geq M(\tfrac{\eps}{4})$.
For every fixed $j$, if $x\in B_j$ satisfies $$|\Tbar_{\mu, M\gamma}(1)(x)|>\frac{\eps}{4}+\frac{C_1}{M^{\alpha}},$$
then the H\"{o}lder continuity of $\Tbar_{\mu, M\gamma}(1)$ (see Lemma \ref{Lipest}) ensures that  $|\Tbar_{\mu, M\gamma}(1)(x_j)|>\tfrac{\eps}{4}.$  But then since we have assumed that $\mu(B_j)>0$, Lemma \ref{colalt} implies that $\mu(2MB_j)\geq \tau\gamma^s$.  However, note that
$$\sum_j\mu(2MB_j)\leq C_{15}M^d\mu(B(0,3))\leq CM^d.
$$
Thus, the balls $B_j$ that satisfy $\mu(2MB_j)\geq \tau\gamma^s$ can number at most $\tfrac{CM^d}{\tau\gamma^s},$ and so the union of these balls $B_j$ has volume (or $m_d$ measure) no greater than $\tfrac{\omega_dC_{16}M^d}{\tau}\gamma^{d-s}$.    Since $B(0,1)\subset \bigcup_j B_j$, our conclusion is that the set
$$ E=\Bigl\{x\in B(0,1): |\Tbar_{\mu, M\gamma}(1)(x)|>\frac{\eps}{4}+\frac{C_1}{M^{\alpha}}\Bigl\}
$$
has $m_d$ measure at most $\tfrac{\omega_dC_{16}M^d}{\tau}\gamma^{d-s}$.  Combined with (\ref{uniformtriv}), we get that
$$\int_{E}|\Tbar_{\mu, M\gamma}(1)(x)|dm_d\leq \frac{\omega_dC_{16}M^d}{\tau}\gamma^{d-s}C_{14}\bigl[ \eps+1+\log\bigl(\frac{1}{M\gamma}\bigl)\bigl].
$$
But of course,
$$\int_{B(0,1)\backslash E}|\Tbar_{\mu, M\gamma}(1)(x)|dm_d\leq \frac{\omega_d\eps}{4}+\frac{C_1\omega_d}{M^{\alpha}}.
$$
We therefore reach a contradiction with (\ref{Mgambig}) if $\frac{C_1}{M^{\alpha}}<\frac{\eps}{8}$,  and
\begin{equation}\label{paramterchoices}\frac{C_{16}M^d}{\tau}\gamma^{d-s}C_{14}\bigl[\eps+1+\log\bigl(\frac{1}{M\gamma}\bigl)\bigl]<\frac{\eps}{8}.
\end{equation}
In that case there must exist some $j$ with $\mu(B_j)=0$.  But since $x_j\in B(0,1)$, there is a ball of radius $\tfrac{\gamma}{2}$ contained in $B_j\cap B(0,1)$, and this ball is disjoint from $\supp(\mu)$.

It remains to make a choice of $M$ and then $\gamma$ to ensure that the conditions that have been placed upon these two parameters throughout the proof are consistent.  First let us fix $M>\max\bigl[M\bigl(\tfrac{\eps}{4}\bigl), \bigl(\tfrac{8C_1}{\eps}\bigl)^{1/\alpha}\bigl]$ (thereby fixing $M$ in terms of $\eps$).  When choosing $\gamma>0$, there are two conditions to take care of: $M\gamma<\min(1,c_{13}\eps^{1/(d-s)})$, and (\ref{paramterchoices}).  Since the left hand side of (\ref{paramterchoices}) tends to zero as $\gamma$ tends to zero, such a choice of $\gamma$ is clearly possible, and this completes the proof.\end{proof}

\section{A variant of Cotlar's inequality and Wiener's inversion lemma}

This section is concerned with proving two basic technical lemmas; a variant of Cotlar's inequality, and a variant of the Wiener lemma.  Both of these results will be used often in Parts II and III.  

\subsection{Cotlar's inequality}

\begin{lem}  There exists a constant $C>0$, depending on $s$, $d$, $\alpha$, and $\Lambda$, such that for any non-trivial $\Lambda$-nice reflectionless measure,
$$\sup_{\delta>0}|\Tbar_{\mu, \delta}(1)(x)|\leq C, \text{ for any }x\in \mathbb{R}^d.
$$
\end{lem}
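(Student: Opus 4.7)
I would argue by contradiction. The scale and translation invariance established at the start of Section \ref{nicesection} (namely that $\mu_{x,r}$ is again $\Lambda$-nice and reflectionless, and $\Tbar_{\mu,\delta}(1)(z) = \Tbar_{\mu_{x,r}, \delta/r}(1)((z-x)/r)$) reduces the task to proving $|\Tbar_{\mu, 1}(1)(0)| \leq C$ for any non-trivial $\Lambda$-nice reflectionless $\mu$. So suppose $|\Tbar_{\mu, 1}(1)(0)| = K$ with $K$ very large; the goal is to derive a contradiction.

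First, I would trace through the proofs of Lemma \ref{colalt} and Proposition \ref{collem} to observe that, for $\eps$ above a threshold depending only on $s$, $\alpha$, and $\|K\|_*$, one may take $M(\eps) = 4$ and $\tau(\eps) \geq c\eps$ for an absolute constant $c > 0$: the lower bound $M \geq (8C_1/\eps)^{1/\alpha}$ is compatible with $M = 4$ once $\eps$ is large, and the abundancy parameter $\kap(\min(\eps/2, 1/2))$ stabilizes to $\kap(1/2)$ for $\eps \geq 1$, rendering $\tau(\eps) = c_{12}\eps\,\kap(1/2)^d$ linear in $\eps$. With these asymptotics, Lemma \ref{colalt} applied at $x = 0$, $r = 1/4$ (so $Mr = 1$), $\eps = K$, shows that alternative (i) (which would give $\mu(B(0, 2)) \geq cK/4^s$) contradicts the niceness bound $\mu(B(0, 2)) \leq 2^s\Lambda$ when $K$ is large enough. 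Hence alternative (ii) must hold, providing the base case $\mu(B(0, 1/4)) = 0$.

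Starting from this, I would iterate to enlarge the gap. Assume inductively that $\mu(B(0, R_n)) = 0$ for some $R_n \geq 1/4$. Using the bound (\ref{Fabsval}) together with the gap and niceness,
$$\bigl|\Tbar_{\mu, 1}(1)(0) - \Tbar_{\mu, 10R_n}(1)(0)\bigr| \leq \int_{B(0, 10R_n)\setminus B(0, R_n)}\frac{2\,d\mu(y)}{(1+|y|)^s} \leq C_\Lambda,$$
with $C_\Lambda$ independent of $n$ (a routine layer-cake / integration-by-parts computation using $\mu(B(0, t)) \leq \Lambda t^s$). Hence $|\Tbar_{\mu, 10R_n}(1)(0)| \geq K - C_\Lambda$, and Lemma \ref{colalt} applied at scale $\delta = 10R_n$, with $r = 2.5 R_n$ and $\eps = K - C_\Lambda$, yields the dichotomy: either alternative (i) gives $\mu(B(0, 20R_n)) \geq c(K - C_\Lambda)(2.5 R_n)^s$, which contradicts the niceness bound $\mu(B(0, 20R_n)) \leq 20^s \Lambda R_n^s$ once $K$ is large; or alternative (ii) gives $\mu(B(0, 2.5 R_n)) = 0$, extending the gap to $R_{n+1} = 2.5 R_n$.

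If $K$ exceeds the finite thresholds arising above, alternative (ii) must occur at every iteration, so $R_n = (2.5)^n/4 \to \infty$ and $\mu \equiv 0$, contradicting the non-triviality of $\mu$. This yields the universal bound $|\Tbar_{\mu, 1}(1)(0)| \leq C = C(\Lambda, s, \alpha, \|K\|_*)$. The main obstacle is the asymptotic tracking of $M(\eps)$ and $\tau(\eps)$ for large $\eps$, which is implicit rather than highlighted in the statement of Lemma \ref{colalt}; once this is settled, everything else is a straightforward iterative use of tools already established, since the logarithmic tail error from $\langle T^\delta[\delta_0], 1\rangle_\mu$ grows much slower than the geometric growth of the gap.
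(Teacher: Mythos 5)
Your proof is correct in its essentials but takes a genuinely different route from the paper's. The paper uses the David--Mattila stopping-time trick: given $\delta$, it finds the least doubling scale $r=5^j\delta$ with $\mu(B(x,r))>0$ and $\mu(B(x,5r))<5^{s+1}\mu(B(x,r))$, bounds $|\Tbar_{\mu,\delta}(1)(x)-\Tbar_{\mu,r}(1)(x)|$ by a dyadic-shell sum that telescopes thanks to the stopping rule, and then controls $\Tbar_{\mu,r}(1)(x)$ directly via the reflectionless identity by comparing with the $\mu$-average against a bump $\psi_{x,r}$, using Lemma \ref{Lipest}, Lemma \ref{lipgenest}, and the doubling bound $\mu(B(x,2r))\lesssim \mu(B(x,r))$. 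That argument is local, constructive, and self-contained at the lemma-statement level, using only Sections 3--5. You instead run a global contradiction iteration built on Lemma \ref{colalt}: a large value $K=|\Tbar_{\mu,1}(1)(0)|$ forces $\mu(B(0,1/4))=0$, and once a gap $B(0,R_n)$ exists, the tail estimate $|\Tbar_{\mu,1}(1)(0)-\Tbar_{\mu,10R_n}(1)(0)|\leq C_\Lambda$ (correct, by (\ref{Fabsval}) plus niceness on the annular shells) preserves most of $K$ at scale $10R_n$, so re-applying Lemma \ref{colalt} enlarges the gap geometrically and drives $\mu\equiv 0$. The mechanism closes, but only after re-tracing the proof of Lemma \ref{colalt} to extract the large-$\eps$ asymptotics -- $M(\eps)$ bounded and $\tau(\eps)\gtrsim \eps$ once $\eps\geq 1$, because $\kap(\min[\eps/2,1/2])$ stabilizes -- which you correctly flag as not readable from its statement; there are also small parameter nudges to make ($M>4$ is required strictly while your $r=1/4$ and $r=2.5R_n$ put $M=4$ exactly, and $\kap(1/2)$ sits at the closed endpoint of the Collapse Lemma's admissible interval $(0,\tfrac12)$), all of which are trivially repaired. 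Net: your version leans on the heavier Section~\ref{colsec} machinery and on unadvertised internals of Lemma \ref{colalt}, whereas the paper's proof is shorter and lemma-statement-level; but your argument does surface a fact the paper's local argument doesn't, namely that a single large value of $\Tbar_{\mu,\delta}(1)$ at one point forces the measure to vanish on arbitrarily large balls.
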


Before we prove this lemma, let us note an immediate corollary of it.  If $f$ is $m_d$-measurable on $\R^d$, denote by $\|f\|_{L^{\infty}(m_d)}$ the essential supremum of $f$.  That is, the least $M>0$ for which $m_d(\{x\in \R^d: |f(x)|>M\})=0$.

\begin{cor}\label{reflinfbd}  If $\mu$ is a non-trivial $\Lambda$-nice reflectionless measure, then $\|\Tbar_{\mu}(1)\|_{L^{\infty}(m_d)}\leq C$.
\end{cor}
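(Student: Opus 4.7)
The plan is to lift the uniform pointwise bound $\sup_{\delta>0}|\Tbar_{\mu,\delta}(1)(x)|\leq C$ from the preceding lemma to an essentially-uniform bound on $\Tbar_\mu(1)$ by controlling, in an $L^1(m_d)$-averaged sense, the correction term that appears in identity (\ref{deltprodrep2}).

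First I would recall from Lemma \ref{locl1} that $\delta_x \in \MBD(\mu)$ for $m_d$-a.e. $x \in \R^d$, so at every such $x$ the identity (\ref{deltprodrep2}) applied with $\varphi \equiv 1$ reads
$$\Tbar_\mu(1)(x) = \Tbar_{\mu,\delta}(1)(x) + \langle T^\delta[\delta_x], 1\rangle_\mu \quad \text{for every } \delta > 0.$$
The first summand is bounded by $C$, uniformly in both $\delta$ and $x$, by the preceding lemma, so the task reduces to showing that for $m_d$-a.e. $x$ there is a sequence $\delta_k \downarrow 0$ along which $\langle T^{\delta_k}[\delta_x], 1\rangle_\mu \to 0$.

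For this I would invoke inequality (\ref{localintest}) with $\varphi\equiv 1$: for any $R\geq \delta>0$,
$$\int_{B(0,R)} \bigl|\langle T^\delta[\delta_x], 1\rangle_\mu\bigr|\, dm_d(x) \leq C\,\delta^{d-s}\mu(B(0,R+\delta)) \leq C\,\Lambda (2R)^s\,\delta^{d-s},$$
where the last step uses $\Lambda$-niceness. Since $d>s$, the right-hand side tends to $0$ as $\delta \to 0$ for each fixed $R$. A standard diagonal extraction across an exhaustion by balls $B(0,R_n)$ with $R_n \uparrow \infty$ produces a single sequence $\delta_k \downarrow 0$ along which $\langle T^{\delta_k}[\delta_x], 1\rangle_\mu \to 0$ for $m_d$-almost every $x\in \R^d$. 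Taking $k\to \infty$ in the displayed identity then yields $|\Tbar_\mu(1)(x)|\leq C$ at $m_d$-a.e. $x$, i.e. $\|\Tbar_\mu(1)\|_{L^\infty(m_d)}\leq C$.

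This is really a direct corollary; I do not foresee any serious obstacle. The only mildly subtle point is the passage from the $L^1_{\loc}(m_d)$-smallness of the correction term to an a.e. subsequential limit, which is handled by the routine diagonal extraction described above.
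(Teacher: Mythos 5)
Your proof is correct, but it is more roundabout than necessary; the diagonal extraction can be dispensed with entirely. Once you invoke (\ref{deltprodrep2}) to write $\Tbar_\mu(1)(x) = \Tbar_{\mu,\delta}(1)(x) + \langle T^\delta[\delta_x],1\rangle_\mu$ for $m_d$-a.e.\ $x$, the correction term can be shown to vanish pointwise as $\delta\to 0$ rather than only along a subsequence in $L^1_{\loc}(m_d)$: for $x$ with $\delta_x\in\MBD(\mu)$ (which is $m_d$-a.e.\ $x$), the defining property of $\MBD(\mu)$ gives $\int_{B(x,1)}\tfrac{1}{|y-x|^s}\,d\mu(y)<\infty$, and since
\[
|\langle T^\delta[\delta_x],1\rangle_\mu| \leq \int_{B(x,\delta)}\frac{d\mu(y)}{|y-x|^s},
\]
dominated convergence immediately yields $\langle T^\delta[\delta_x],1\rangle_\mu\to 0$ as $\delta\to 0$ (note that finiteness of the integral forces $\mu(\{x\})=0$). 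Taking $\delta\to 0$ in the identity then gives $|\Tbar_\mu(1)(x)|\leq C$ for $m_d$-a.e.\ $x$, with no need for the $\Lambda$-niceness beyond what the preceding lemma already used. Your route via Lemma \ref{locl1}/(\ref{localintest}), Chebyshev-style $L^1$ smallness, and a diagonal subsequence also reaches the same conclusion and is logically sound, but it trades the direct a.e.\ pointwise decay for an averaged estimate and then recovers a.e.\ convergence along a subsequence --- extra machinery that the phrase ``immediate corollary'' in the paper suggests is not intended.
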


\begin{proof}[Proof of the Cotlar inequality]  The proof follows a standard path, based upon an idea of David and Mattila, see \cite{DM, NTrV}.   Let $\delta>0$, and set $B_j = B(x,5^j\delta)$.  Suppose that $\mu(B_{j+1})\geq 5^{s+1}\mu(B_{j})$ for all $j\in \mathbb{Z}_+$.  Insofar as $\mu$ is non-trivial, $\mu(B_{j'})>0$ for some $j'\in \mathbb{Z}_+$.  But then for $j>j'$, $\mu(B_j) \geq \mu(B_{j'})5^{(s+1)(j-j')}$, and so for sufficiently large $\mu(B_j) > \Lambda 5^{sj}\delta^s,$
which is a contradiction.  Thus, there is a least $j\in \mathbb{Z}_+$ with $\mu(B_{j+1}) < 5^{s+1}\mu(B_{j})$.  Set $r=5^j\delta$.  Then $\mu(B(x,r))>0$, and $\mu(B(x,5r))<5^{s+1}\mu(B(x,r))$.

 First note that
$$|\Tbar_{\mu, \delta}(1)(x) - \Tbar_{\mu, r}(1)(x)|\leq \int_{B(x,r)}|K_{\delta}(y-x)-K_{r}(y-x)|\,d\mu(y).
$$
The right hand side is trivially bounded by $2\int_{B(x,r)}\tfrac{\,d\mu(y)}{\max(\delta,|x-y|)^s}$.  But now note that this integral may in turn be estimated by a constant multiple of
$$\sum_{0\leq \ell\leq j}\frac{\mu(B(x, 5^{\ell}\delta))}{5^{\ell s}\delta^s}\leq \mu(B(x, 5^j\delta))\sum_{0\leq \ell\leq j}\frac{1}{5^{(s+1)(j-\ell)}5^{\ell s}\delta^s}.
$$
The sum on the right hand side has size at most $\Lambda 5^{js}\sum_{0\leq \ell\leq j}\tfrac{5^{\ell}}{5^{j(s+1)}}\leq C.$  From this we conclude that $|\Tbar_{\mu, \delta}(1)(x) - \Tbar_{\mu, r}(1)(x)|\leq C.$

Now choose a non-negative bump function $\psi\in \Lip_0(B(0,2))$ such $\psi \equiv 1$ on $B(0,1)$.  Set $\psi_{x,r} = \psi\bigl(\tfrac{\cdot -x}{r}\bigl)$.  It remains to estimate
$| \Tbar_{\mu, r}(1)(x)|$, which, according to the reflectionless property of $\mu$ is equal to $$\Bigl|\Tbar_{\mu,r}(1)(z) - \Bigl[\int_{\mathbb{R}^d}\psi_{x,r} \,d\mu\Bigl]^{-1}\langle \T1(\psi_{x,r}\mu),1\rangle_{\mu}\Bigl|.$$  But, appealing to (\ref{bardecentrep}) with $\nu=\psi_{x,r}\mu$ and $\varphi\equiv 1$, we see that this is in turn equal to
\begin{equation}\begin{split}\nonumber\Bigl| \Tbar_{\mu, r}(1)(x)-&\Bigl[\int_{\mathbb{R}^d}\psi_{x,r} \,d\mu\Bigl]^{-1}\int_{\R^d}\Tbar_{\mu,r}(1)\psi_{x,r}\,d\mu\\
&-\Bigl[\int_{\mathbb{R}^d}\psi_{x,r}\,d\mu\Bigl]^{-1}\langle T^r(\psi_{x,r}\mu),1\rangle_{\mu}\Bigl|.\end{split}\end{equation}
The H\"{o}lder continuity of $\Tbar_{\mu,r}(1)$ (see Lemma \ref{Lipest}) yields that
$$\Bigl|\Tbar_{\mu,r}(1)(x) - \Bigl[\int_{\mathbb{R}^d}\psi_{x,r} \,d\mu\Bigl]^{-1}\int_{\R^d}\Tbar_{\mu,r}(1)\psi_{x,r}\,d\mu\Bigl|\leq C.$$
On the other hand, applying Lemma \ref{lipgenest} with the choices $\delta=r$, $f=\psi_{x,r}$, and $\varphi\equiv 1$, yields
$$|\langle T^r(\psi_{x,r}\mu), 1\rangle_{\mu}| \leq Cr \|\psi_{x,r}\|_{\Lip}\mu(B(x,2r))\leq C\mu(B(x,2r)).
$$
Thus
$$\Bigl[\int_{\mathbb{R}^d}\psi_{x,r} \,d\mu\Bigl]^{-1} \bigl|\langle T^r(\psi_{x,r}\mu), 1 \rangle_{\mu}\bigl| \leq \frac{C\mu(B(x,2r))}{\mu(B(x,r))}\leq C,$$
where the doubling property was used in the final inequality.  Bringing these estimates together proves the lemma.
\end{proof}

\subsection{A Wiener Lemma}  Our next result is a variant of the Wiener inversion lemma.  Notice that a homogeneous CZ kernel $K$ can be written as $K(x) = \tfrac{\Omega(\tfrac{x}{|x|})}{|x|^s}$, where $\Omega :\mathbb{S}^{d-1}\to \mathbb{C}^{d'}$.  We shall assume (solely in this subsection) that $\Omega\in C^{\infty}(\mathbb{S}^{d-1})$.  Under this assumption, we have that
\begin{equation}\label{fourier}
\widehat{\frac{\Omega\bigl(\tfrac{\cdot}{|\cdot|}\bigl)}{|\cdot|^s}}(\xi) = \frac{m\bigl(\tfrac{\xi}{|\xi|}\bigl)}{|\xi|^{d-s}}, \text{ for any } \xi\neq 0,
\end{equation}
for a (vector valued) $m\in C^{\infty}(\mathbb{S}^{d-1})$ (see for example  Proposition 2.4.8 of Grafakos \cite{Gr}).  Furthermore, if $f\in \mathcal{S}(\mathbb{R}^d)$ (the Schwartz class) satisfies $f*K\in L^1(m_d)$, then $$\widehat{f*K}(\xi) = \hat{f}(\xi) \frac{m\bigl(\tfrac{\xi}{|\xi|}\bigl)}{|\xi|^{d-s}}.$$

\begin{lem}\label{wiener}  Suppose that $\mu$ is a $\Lambda$-nice measure, and $m\bigl(\xi)\neq 0$ for any $\xi\in \mathbb{S}^d$.  If, for some constant $\Gamma\in \mathbb{C}^{d'}$, $\Tbar_{\mu}(1)(x)= \Gamma$ for $m_d$-almost every $x\in \mathbb{R}^d$, then $\mu\equiv 0$. \end{lem}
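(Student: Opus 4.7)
The plan is to view $\mu$ as a tempered distribution and use Fourier analysis to force $\hat{\mu}$ to be supported at the origin; the $s$-growth of $\mu$ will then rule out all nonzero candidates.

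First, using that $\Tbar_{\mu}(1)\equiv \Gamma$ $m_d$-almost everywhere together with the difference formula (\ref{diffform}), I obtain
$$\int_{\R^d}[K(y-x)-K(y-x')]\,d\mu(y)=0 \quad \text{for $m_d\otimes m_d$-a.e.\ }(x,x').$$
Fixing an $x_0$ for which this holds for $m_d$-a.e.\ $x$, and any $\psi\in C_c^\infty(\R^d)$ with $\int\psi\,dm_d=0$, I multiply by $\psi(x)$ and integrate in $x$. The outer integral vanishes trivially, so after interchanging the order of integration I arrive at
$$\int_{\R^d}(K*\psi)(y)\,d\mu(y)=0.$$
Fubini is justified because Lemma \ref{locl1} controls the singular part of the double integral, while the mean-zero condition on $\psi$ combined with the kernel estimate (\ref{standard}) yields the improved decay $|(K*\psi)(y)|\lesssim (1+|y|)^{-s-\alpha}$ at infinity, which is integrable against the $\Lambda$-nice measure $\mu$. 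A routine density argument then extends the vanishing identity to all $\psi\in\mathcal{S}(\R^d)$ with $\int\psi=0$.

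Next I pass to the Fourier side. The hypothesis that $m$ does not vanish on $\mathbb{S}^{d-1}$ ensures that $\hat{K}(\xi)=m(\xi/|\xi|)/|\xi|^{d-s}$ is $C^\infty$ and nonvanishing on $\R^d\setminus\{0\}$. For any $\phi\in C_c^\infty(\R^d\setminus\{0\})$, the quotient $\hat{\psi}:=\phi/\hat{K}$ again lies in $C_c^\infty(\R^d\setminus\{0\})$, so in particular $\hat{\psi}(0)=0$, the inverse Fourier transform $\psi\in\mathcal{S}(\R^d)$ has $\int\psi=0$, and $K*\psi=\mathcal{F}^{-1}\phi\in\mathcal{S}(\R^d)$. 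Plugging this $\psi$ into the extended identity yields $\langle \hat{\mu},\phi\rangle=0$ (up to a fixed Fourier normalization constant). Therefore $\hat{\mu}$ annihilates $C_c^\infty(\R^d\setminus\{0\})$, hence is a distribution supported at $\{0\}$, and is consequently a finite linear combination of derivatives of $\delta_0$. So $\mu$ coincides with a polynomial density $d\mu=P(x)\,dm_d(x)$. Since $\mu\ge 0$, $P\ge 0$; but the growth condition $\mu(B(0,r))\le \Lambda r^s$ forces $P\equiv 0$, because for any nonzero nonnegative polynomial $P$ the quantity $\int_{B(0,r)}P\,dm_d$ grows at least like $r^d$ as $r\to\infty$ while $s<d$. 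Hence $\mu\equiv 0$.

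The main obstacle I anticipate is the Fubini/density step: one must provide a quantitative $L^1(d\mu)$ bound on $K*\psi$ in Schwartz seminorms of $\psi$ that still captures the improved $(1+|y|)^{-s-\alpha}$ tail decay granted by $\int\psi=0$, and then upgrade the identity from $C_c^\infty$ to Schwartz mean-zero $\psi$. The division by $\hat{K}$ away from the origin is unproblematic thanks to $m\ne 0$, and the final polynomial-growth contradiction is elementary.
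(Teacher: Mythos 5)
Your approach is genuinely different from the paper's and, in spirit, more direct. The paper adapts Korevaar's localization proof of the Wiener Tauberian theorem: for each $\xi_0\neq 0$ it constructs $G=[\Delta^{\beta}\psi]*K_j$ with $s+2\beta>d$ so that $G\in L^1(m_d)$, shows $G*\mu\equiv 0$ and $\hat G\neq 0$ on a ball around $\xi_0$, and then divides by $\hat G$ locally. You bypass the decay bootstrap and the local inversion by showing $\int_{\R^d}(K*\psi)\,d\mu=0$ for all mean-zero $\psi$ and passing to the Fourier side through tempered-distribution duality; the $(1+|y|)^{-s-\alpha}$ decay you extract from the mean-zero condition and (\ref{standard}) is exactly what is needed to integrate against a $\Lambda$-nice measure, so the Fubini and density steps, while they need care, are sound, and the final polynomial-growth contradiction matches the paper's.

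There is, however, a concrete gap in the Fourier step: $K$, and hence $\hat K(\xi)=m(\xi/|\xi|)/|\xi|^{d-s}$, is $\mathbb{C}^{d'}$-valued, so the expression $\hat\psi:=\phi/\hat K$ is meaningless unless $d'=1$. The hypothesis $m\neq 0$ on the unit sphere only ensures that for each direction \emph{some component} $m_j$ is nonzero, and which one may vary with the direction. To repair this, given $\xi_0\neq 0$ pick $j$ with $m_j(\xi_0/|\xi_0|)\neq 0$, restrict $\phi$ to be supported in a ball around $\xi_0$ on which $m_j$ stays nonvanishing, and set $\hat\psi=\phi/\hat{K_j}$; the $j$-th component of $\int(K*\psi)\,d\mu$ is then $\int\mathcal{F}^{-1}\phi\,d\mu$, which vanishes by your identity. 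A partition of unity subordinate to such a cover of $\R^d\setminus\{0\}$ then yields $\langle\hat\mu,\phi\rangle=0$ for every $\phi\in C_c^\infty(\R^d\setminus\{0\})$. This localized, componentwise selection is exactly the mechanism running through the paper's proof; once it is inserted, your argument is complete and considerably shorter than the paper's.
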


This lemma can be proved by a slight modification of any of the proofs of Wiener's lemma based upon localization.  The proof that follows is based upon a paper of Korevaar \cite{Kor}.

\begin{proof} Choose $\eta \in \mathcal{S}(\mathbb{R}^d)$ satisfying $\widehat\eta \equiv 1$ on $B(0,1)$, $\widehat\eta \geq 0$ in $\mathbb{R}^d$, and $\widehat\eta \equiv 0$ outside $B(0,2)$.   Define $\eta_{\eps}$ by $\widehat\eta_{\eps}  = \widehat\eta\bigl(\tfrac{\cdot}{\eps}\bigl)$.  

Fix $\xi_0\neq 0$.  By assumption, there is a component $m_j$ of $m$ for which $|m_j(\tfrac{\xi_0}{|\xi_0|})|>0$.  Then there is a ball $B(\xi_0, t)$ with $0<t<\tfrac{|\xi_0|}{2}$, and $|m_j(\tfrac{\xi}{|\xi|})|\geq \tfrac{1}{2}|m_j(\tfrac{\xi_0}{|\xi_0|})|$ for any $\xi\in B(\xi_0, 2t)$.

Set $\psi = \mathcal{F}^{-1}\widehat\eta_t(\cdot - \xi_0)(x) = e^{2\pi i x\cdot \xi_0}t^n \eta(t\cdot x)$.  Note that $\psi$ is a Schwartz class function with $m_d$-mean zero (certainly $\widehat\eta_t(- \xi_0)=0$).  Now pick $\beta \in \mathbb{N}$ satisfying $s+2\beta>d$, and define $$G = [\Delta^{\beta} \psi]*K_j.$$

It is clear that $G$ is a smooth function.  We claim that it has the following two additional properties:

\begin{enumerate}
\item $|G(x)|\leq \frac{C}{(1+|x|)^{s+2\beta}}$ (so $G\in L^1(m_d)$), and
\item $G*\mu \equiv 0$ in $\mathbb{R}^d$.\end{enumerate}

 We shall first establish the decay estimate.  It is easy to see that $G$ is bounded, so it suffices to derive the claim for $|x|>1$.   For such an $x$, write
\begin{equation}\begin{split}\nonumber G(x) = &\int_{\mathbb{R}^d} [ \Delta^{\beta}_y \psi(x-y)] \widehat\eta(\tfrac{4(y-x)}{|x|}) K_j(y) dm_d(y)\\
& +   \int_{\mathbb{R}^d}[\Delta^{\beta}_y\psi(x-y)][1-\widehat\eta(\tfrac{4(y-x)}{|x|})] K_j(y) dm_d(y).
\end{split}\end{equation}
In order to estimate the first of the two terms on the right hand side, we integrate by parts (several times) to obtain
$$\int_{\mathbb{R}^d}   \psi(x-y) \Delta_y^{\beta}\bigl[\widehat\eta(\tfrac{4(y-x)}{|x|}) K_j(y)\bigl] dm_d(y).
$$
But $\bigl[\widehat\eta(\tfrac{4(\cdot-x)}{|x|}) K_j(y)\bigl]$ is supported in   $B(x, \tfrac{|x|}{2})$, and for $y\in(x, \tfrac{|x|}{2})$ we have that $|\Delta_y^{\beta}\bigl[\widehat\eta(\tfrac{4(y-x)}{|x|}) K_j(y)\bigl]|\leq \tfrac{C}{|x|^{s+2\beta}}$.  Thus
$$\Bigl|\int_{\mathbb{R}^d}   \psi(x-y) \Delta_y^{\beta}\bigl[\widehat\eta(\tfrac{4(y-x)}{|x|}) K_j(y)\bigl] dm_d(y)\Bigl|\leq  \tfrac{C}{|x|^{s+2\beta}}\|\psi\|_{L^1(m_d)}\leq  \tfrac{C}{|x|^{s+2\beta}}.$$
For the second term, merely note that $|\Delta^{\beta}\psi(x-y)|\leq \tfrac{C_n}{|x-y|^n}$ for any $n\in \mathbb{N}$.  Combined with the observation that $1-\widehat\eta(\tfrac{4(\cdot-x)}{|x|})$ is supported in $\mathbb{R}^d\backslash B(x, \tfrac{|x|}{4})$, this bound yields
\begin{equation}\begin{split}\nonumber\Bigl|\int_{\mathbb{R}^d}&[\Delta^{\beta}_y\psi(x-y)](1-\widehat\eta(\tfrac{4(y-x)}{|x|})) K_j(y) dm_d(y)\Bigl|\\
&\leq C_n\int_{\mathbb{R}^d\backslash B(x, \tfrac{|x|}{4})} \frac{1}{|x-y|^n}\frac{1}{|y|^s}dm_d(y)\leq \frac{C_n}{|x|^{n-(d-s)}}.\end{split}\end{equation}

 To see the second claim,  fix $x'$ with $\delta_{x'}\in \MBD(\mu)$ and $\Tbar_{\mu}(1)(x')= \Gamma.$  Recalling the formula (\ref{diffform}), we see that for $m_d$-almost every $x\in \mathbb{R}^d$,
 \begin{equation}\begin{split}\nonumber 0= \Tbar_{\mu}(1)(x) -\Tbar_{\mu}(1)(x')   =\int_{\mathbb{R}^d} [K(y-x)-K(y-x')]\,d\mu(y).\end{split}\end{equation}
Note that the decay estimate (1), along with the niceness of $\mu$, readily yields that there is a constant $C>0$ so that $|G|*\mu(x)\leq C$ for all $x\in \mathbb{R}^d$.  Thus
 \begin{equation}\begin{split}\nonumber G*\mu(x) &= \int_{\mathbb{R}^d}\int_{\mathbb{R}^d}\psi(z) K(x-y-z)dm_d(z)\,d\mu(y)\\
 & =  \int_{\mathbb{R}^d}\int_{\mathbb{R}^d}\psi(z) \bigl[K(x-y-z)-K(x'-y)\bigl]dm_d(z)\,d\mu(y)\\
 \end{split}\end{equation}
 where the $m_d$-mean zero property of $\psi$ has been used in order to freely subtract the $K(x'-y)$ term in the inner integral.  But then, as we shall prove momentarily, for every $x\in \mathbb{R}^d$,
 \begin{equation}\label{xprimedouble}\int_{\mathbb{R}^d}|\psi(z)|\int_{\mathbb{R}^d}\bigl|K(x-y-z)-K(x'-y)\bigl|\,d\mu(y)dm_d(z)<\infty,
 \end{equation}
  and so the Fubini theorem yields
 $$G*\mu(x)=\int_{\mathbb{R}^d}\psi(z)\int_{\mathbb{R}^d}\big[K(x-y-z)-K(x'-y)\bigl]\,d\mu(y)dm_d(z).
 $$
 But the inner integral equals zero for $m_d$-almost every $x\in \mathbb{R}^d$, which establishes the second claim (since $G*\mu$ is continuous).

  Let us now return to the claim (\ref{xprimedouble}), which is a pretty straightforward computation.  We shall split the inner integral into a number of pieces, regularly using the fact that $\psi$ lies in the Schwarz class.  First note that since $x'\in \MBD(\mu)$ is fixed, there is some constant $C>0$ such that $\int_{B(x',1)}|K(x'-y)|d\mu(y)\leq C$.  The standard tail estimate (\ref{standard}) also yields that,
 $$\int_{\mathbb{R}^d\backslash B(0, 2\max(|x'|, |x-z|))}|K(x-y-z)-K(x'-y)|d\mu(y)\leq C,
 $$
 where the constant depends on neither $x$ nor $z$.  Next, note that
 \begin{equation}\begin{split}\nonumber\int_{\mathbb{R}^d}|\psi(z)|\int_{B(x-z,1)}&|K(x-y-z)|d\mu(y)dm_d(z)\\
 &= \int_{\mathbb{R}^d}\int_{B(x-y,1)}|\psi(z)||K(x-y-z)|dm_d(z)d\mu(y).
 \end{split}\end{equation}
 But for $z\in B(x-y,1)$, we have $|\psi(z)|\leq \frac{C}{1+|x-y|^{s+1}}$, while it is easy to see that $\int_{B(x-y,1)}\tfrac{1}{|z-y-z|^s}dm_d(z)\leq C$, so
 $$ \int_{\mathbb{R}^d}\int_{B(x-y,1)}\!\frac{|\psi(z)|}{|x-y-z|^s}dm_d(z)d\mu(y)\leq \!C\!\!\int_{\mathbb{R}^d}\frac{1}{1+|x-y|^{s+1}}d\mu(y)\leq C,
 $$
 where the final inequality follows from the niceness of $\mu$.  It remains to estimate the sum of the two integrals
 \begin{equation}\begin{split}\nonumber\int\limits_{\mathbb{R}^d}&\int\limits_{B(0, 2\max(|x'|, |x-z|))\backslash B(x-z,1)}|K(x-z-y)|d\mu(y)|\psi(z)|dm_d(z)\\
 &+\int\limits_{\mathbb{R}^d}\int\limits_{B(0, 2\max(|x'|, |x-z|))\backslash B(x',1)}|K(x'-y)|d\mu(y)|\psi(z)|dm_d(z).
\end{split}\end{equation}
In the domains of integration of these integrals, the kernel $K$ is bounded by $1$ in absolutely value, so the niceness of $\mu$ yields $$2\int_{\mathbb{R}^d}|\psi(z)|\Lambda(2\max(|x'|, |x-z|))^sdm_d(z).
$$
However, $|\psi(z)|\leq \frac{C(x)}{{1+|x-z|}^{d+s+1}}$ for all $z\in \mathbb{R}^d$, from which it is easily seen that the previous integral is finite. Bringing these estimates together proves the claim (\ref{xprimedouble}).

Next $G\in L^1(m_d)$, and one readily calculates that $\widehat{G}(\xi) = b|\xi|^{2\beta}\widehat\eta_t(\xi-\xi_0)\widehat{K_j}(\xi)$ for some non-zero complex number $b$, and so $\widehat{G}(\xi)\neq 0$ in $B(\xi_0, t)$.   Let $\eps\in (0,\tfrac{t}{2})$, and consider the function $$\frac{\widehat\eta_{\eps}(\xi-\xi_0)}{\widehat{G}(\xi)}.$$  This function lies in the Schwartz class, and so it is the Fourier transform of a Schwartz class function $F$.

Now, since $|G|*\mu$ is a bounded function, we certainly have that $[|F|*(|G|*\mu)](x)<\infty$ for every $x\in \mathbb{R}^d$.  Thus $(F*G)*\mu = F*(G*\mu) \equiv 0 $ in $\mathbb{R}^d$.  But since $F*G = \mathcal{F}^{-1}\widehat\eta_{\eps}(\cdot-\xi_0)$, we obtain
$$\bigl[\mathcal{F}^{-1}\widehat\eta_{\eps}(\cdot-\xi_0)\bigl]*\mu=0.
$$
Taking the Fourier transform, we deduce that the tempered distribution $\hat{\mu}$ vanishes in the ball $B(\xi_0, \eps)$.  Since $\xi_0$ was taken to be any non-zero frequency, $\hat{\mu}$ is supported at the origin, and is therefore the Fourier transform of a polynomial.    But since $\mu$ is non-negative, so is the polynomial.  If the polynomial is non-zero then there is a constant $c>0$ such that for all sufficiently large $R$, $\mu(B(0,R))\geq cR^d$.  But $\mu(B(0,R))\leq \Lambda R^s$, and for large enough $R$ this yields a contradiction.
\end{proof}

\section{Weak convergence results}

In this section we establish some convergence results of the bilinear form defined in Section \ref{primer}.  Several of them have antecedents in Mattila and Verdera's paper \cite{MV} on the convergence of singular integrals.

We begin with a definition.  A sequence of measures $\mu_k$ is called \textit{uniformly diffuse} if, for each $R>0$ and $\eps>0$, there exists $r>0$ such that for all $k$,
\begin{equation}\label{closesmall}\iint\limits_{\substack{B(0,R)\times B(0,R)\\|x-y|<r}}\frac{\,d\mu_k(x)\,d\mu_k(y)}{|x-y|^{s-1}}\leq \eps.
\end{equation}

A sequence of measures $\mu_k$ is said to have \textit{uniformly restricted growth (at infinity)} if, for each $\eps>0$, there exists an $R\in(0,\infty)$ such that for all $k$,
\begin{equation}\label{tailR}\int_{\mathbb{R}^d\backslash \overline{B(0, R)}}\frac{1}{|x|^{s+\alpha}}\,d\mu_k(x)\leq \eps.
\end{equation}

It is easy to see that a measure $\mu$ is a diffuse if and only if for each $\eps>0$ and $R>0$, there exists $r>0$ such that (\ref{closesmall}) holds for $\mu$.  A measure $\mu$ has restricted growth at infinity if and only if  for every $\eps>0$, there exists $R>0$ such that (\ref{tailR}) holds for $\mu$.

We leave it to the reader to show that any sequence of $\Lambda$-nice measures $\mu_k$ is uniformly diffuse with uniformly restricted growth at infinity.  In future applications, it will be important to permit sequences of measures with more unusual growth conditions (see Section 5 of Part II), which accounts for the more general definition.

We recall that a sequence of measures $\mu_k$ is said to \textit{converge weakly} to a measure $\mu$ if $\lim_{k\to \infty}\int_{\mathbb{R}^d} \varphi(x)\,d\mu_k(x) = \int_{\mathbb{R}^d}\varphi(x)\,d\mu(x)$ for any $\varphi\in C_0(\mathbb{R}^d)$ (the space of compactly supported continuous functions).

\begin{lem} If $\mu_k$ is a weakly convergent sequence of uniformly diffuse measures (respectively measures with uniformly restricted growth), then the limit measure $\mu$ is diffuse (respectively has restricted growth at infinity).  
\end{lem}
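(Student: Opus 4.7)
The plan is to pass to the limit in the defining inequalities of the two hypotheses using weak-$*$ convergence. The only real obstacle is that the integrand $\chi_{|x-y|<r}/|x-y|^{s-1}$ (respectively $\chi_{|x|\geq R}/|x|^{s+\alpha}$) is neither bounded nor continuous: it has a sharp indicator and, when $s>1$, a singularity on the diagonal. I would get around this by sandwiching between compactly supported continuous test functions, passing to the limit in $k$ via weak convergence, and then using monotone convergence to remove the regularization.

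First I would record the standard fact that if $\mu_k\to\mu$ weakly as Radon measures on $\R^d$, then $\mu_k\otimes\mu_k\to\mu\otimes\mu$ weakly on $\R^d\times\R^d$. On tensor-product test functions $\varphi(x)\psi(y)\in C_c$ this is immediate from $\int\varphi\,d\mu_k\to\int\varphi\,d\mu$ factor by factor, and the Stone--Weierstrass theorem extends this to all of $C_c(\R^d\times\R^d)$; the uniform local mass bounds $\limsup_k\mu_k(K)<\infty$ needed for the extension come from testing $\mu_k$ against a compactly supported continuous $\varphi\geq \chi_K$.

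For the diffuseness claim, fix $R'>0$ and $\eps>0$, and apply the uniform diffuseness hypothesis at $(2R',\eps)$ to extract $r>0$ so that (\ref{closesmall}) holds uniformly in $k$. Set $r'=r/2$, and pick continuous cutoffs $\chi\in C_c(B(0,2R'))$ with $\chi\equiv 1$ on $B(0,R')$ and $0\leq\chi\leq 1$; $\psi\in C_c([0,r))$ with $\psi\equiv 1$ on $[0,r']$ and $0\leq\psi\leq 1$; and a sequence of bounded continuous $g_n\geq 0$ on $[0,\infty)$ with $g_n\nearrow t^{-(s-1)}$ pointwise on $(0,\infty)$ (e.g.\ $g_n(t)=\min(n,t^{-(s-1)})$ smoothed near $0$). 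Then
\[
\Phi_n(x,y):=\chi(x)\chi(y)\,\psi(|x-y|)\,g_n(|x-y|)
\]
lies in $C_c(\R^d\times\R^d)$ and is dominated by the integrand in (\ref{closesmall}) for the pair $(2R',r)$. Therefore $\iint\Phi_n\,d\mu_k\otimes d\mu_k\leq\eps$ for every $k$; weak convergence of the product measures yields $\iint\Phi_n\,d\mu\otimes d\mu\leq\eps$; and monotone convergence in $n$ gives
\[
\iint\limits_{\substack{B(0,R')\times B(0,R')\\ |x-y|<r'}}\frac{d\mu(x)\,d\mu(y)}{|x-y|^{s-1}}\leq\eps,
\]
which is exactly the diffuseness of $\mu$ at parameters $(R',\eps)$.

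For the restricted-growth claim the same template is simpler. Given $\eps>0$, use the uniform hypothesis to obtain $R$ so that $\int_{|x|\geq R}|x|^{-(s+\alpha)}\,d\mu_k\leq\eps$ for all $k$. Pick a continuous $\varphi$ with $\chi_{|x|\geq 2R}\leq\varphi\leq\chi_{|x|\geq R}$ and cutoffs $\eta_n\in C_c(\R^d)$, $0\leq\eta_n\leq 1$, $\eta_n\equiv 1$ on $B(0,n)$. Then $\varphi(x)\eta_n(x)/|x|^{s+\alpha}\in C_c(\R^d)$ (it vanishes on $B(0,R)$ because $\varphi$ does) and is bounded above by $\chi_{|x|\geq R}/|x|^{s+\alpha}$, so its integrals against $\mu_k$ are at most $\eps$, hence so is the integral against $\mu$ by weak convergence; monotone convergence in $n$ then gives $\int_{|x|\geq 2R}|x|^{-(s+\alpha)}\,d\mu\leq\eps$, establishing restricted growth of $\mu$. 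The interchange of the two limits (in $k$ first, then in the regularization) is forced because weak convergence fails against the raw singular/discontinuous integrand, and this two-step procedure is the core technical point.
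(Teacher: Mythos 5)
Your proof is correct and follows essentially the same route as the paper: establish weak convergence of the product measures $\mu_k\times\mu_k\to\mu\times\mu$, approximate the non-negative lower semi-continuous integrand from below by an increasing sequence of compactly supported continuous functions, pass to the limit in $k$ by weak convergence, and then remove the regularization by monotone convergence. The only difference is cosmetic: you build the approximating sequence explicitly via cutoffs (paying a harmless factor of $2$ in $R$ and $r$), where the paper invokes the general fact that a non-negative lower semi-continuous function supported in a compact set is an increasing limit of non-negative $C_0$ functions, and you spell out the restricted-growth case that the paper dismisses as ``similar.''
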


In order to prove the lemma, we shall require the following useful fact:  \textit{Suppose that a sequence of measures $\mu_k$ converges weakly to $\mu$, then the sequence of product measures $\mu_k\times\mu_k$ converges weakly to $\mu\times\mu$, that is }
$$\lim_{k\to\infty}\int_{\mathbb{R}^d\times\mathbb{R}^d}\varphi(x,y)\,d\mu_k(x)\,d\mu_k(y) = \int_{\mathbb{R}^d\times\mathbb{R}^d}\varphi(x,y)\,d\mu(x)\,d\mu(y),
$$
for any $\varphi\in C_0(\mathbb{R}^d\times\mathbb{R}^d)$.  This is a standard exercise that can be proved by approximating a function in $  C_0(\mathbb{R}^d\times\mathbb{R}^d)$ by finite sums $\sum_j \psi_j\otimes\eta_j$ with $\psi_j,\eta_j\in C_0(\mathbb{R}^d)$ .

With this fact in hand, the proof of the lemma also becomes a simple exercise.  We shall prove the lemma in the case when the sequence of measures is uniformly diffuse, as the case of uniformly restricted growth is similar.  Fix $\eps>0$ and $R>0$.  Choose $r>0$ so that (\ref{closesmall}) holds for all $k$.  Notice that the set $U=\{(x,y)\in B(0,R)\times B(0,R) : |x-y|<r\}$ is open, and as such, the function
$$(x,y)\mapsto \frac{\chi_U(x,y)}{|x-y|^{s-1}}
$$
 is lower semi-continuous, and so is equal to the pointwise limit of a non-decreasing sequence of non-negative functions in $C_0(\mathbb{R}^d\times\mathbb{R}^d)$.  For each of these $C_0(\mathbb{R}^d\times\mathbb{R}^d)$ functions $\varphi$, we have $\int_{\mathbb{R}^d\times\mathbb{R}^d}\varphi(x,y)\,d\mu(x)\,d\mu(y)\leq \eps$ due to the weak convergence of $\mu_k\times\mu_k$ to $\mu\times\mu$.  But then the monotone convergence theorem yields that $$\iint\limits_{\substack{B(0,R)\times B(0,R)\\|x-y|<r}}\frac{\,d\mu(x)\,d\mu(y)}{|x-y|^{s-1}}\leq \eps$$ as required.

\begin{lem}\label{compsuppconv} If $\mu_k$ is a uniformly diffuse sequence of measures that converges weakly to a measure $\mu$ (and so $\mu$ is diffuse), then
$$\lim_{k\rightarrow \infty}\langle T(f\mu_k), \varphi \rangle_{\mu_k} = \langle T(f\mu), \varphi\rangle_{\mu},
$$
for any $f$ and $\varphi$ in $\Lip_0(\R^d)$.
\end{lem}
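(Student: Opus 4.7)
The plan is to use the symmetrized representation
$$\langle T(f\mu),\varphi\rangle_{\mu}=\iint_{\R^d\times\R^d}K(x-y)H_{f,\varphi}(x,y)\,d\mu(x)\,d\mu(y),$$
valid for any diffuse measure by the discussion preceding Definition \ref{pleasdef}, and analogously for each $\mu_k$. The integrand is singular only on the diagonal, and the uniform diffuseness hypothesis is designed precisely to give a $k$-uniform control of the near-diagonal contribution. Away from the diagonal the integrand is bounded and compactly supported, so the weak convergence of product measures $\mu_k\times\mu_k\to\mu\times\mu$ (a standard consequence of weak convergence of $\mu_k\to\mu$) will take care of it.

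To set things up, fix $R>0$ with $\supp(f)\cup\supp(\varphi)\subset B(0,R)$. Since $H_{f,\varphi}(x,x)=0$ and $H_{f,\varphi}\in\Lip_0(B(0,R)\times B(0,R))$, there is a constant $L=L(f,\varphi)$ such that $|H_{f,\varphi}(x,y)|\leq L|x-y|$, and hence
$$|K(x-y)H_{f,\varphi}(x,y)|\leq L\,\chi_{B(0,R)\times B(0,R)}(x,y)\,|x-y|^{-(s-1)}.$$
By the preceding lemma the limit measure $\mu$ is itself diffuse, so given $\eps>0$ we may invoke the uniform diffuseness of $(\mu_k)$ (and diffuseness of $\mu$) to choose $r>0$ small enough that
$$\iint\limits_{\substack{B(0,R)\times B(0,R)\\|x-y|<r}}\frac{d\mu_k(x)\,d\mu_k(y)}{|x-y|^{s-1}}\leq\eps\quad\text{for all }k,\quad\text{and the same with $\mu_k$ replaced by $\mu$.}$$

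Next, pick a continuous cutoff $\chi:[0,\infty)\to[0,1]$ with $\chi\equiv 0$ on $[0,r/2]$ and $\chi\equiv 1$ on $[r,\infty)$, and split the integrand as $K(x-y)H_{f,\varphi}(x,y)=\Phi_{\text{off}}(x,y)+\Phi_{\text{near}}(x,y)$, where $\Phi_{\text{off}}(x,y)=K(x-y)H_{f,\varphi}(x,y)\chi(|x-y|)$. Then $\Phi_{\text{off}}\in C_0(\R^d\times\R^d)$, so weak convergence of $\mu_k\times\mu_k$ to $\mu\times\mu$ yields
$$\iint\Phi_{\text{off}}\,d(\mu_k\times\mu_k)\longrightarrow\iint\Phi_{\text{off}}\,d(\mu\times\mu).$$
On the other hand $|\Phi_{\text{near}}|$ is supported in the near-diagonal set on which the integrals of $|x-y|^{-(s-1)}$ were controlled, so the contribution of $\Phi_{\text{near}}$ is at most $L\eps$ for every $\mu_k$ and for $\mu$. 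Sending $\eps\to 0$ gives the lemma.

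The only substantive point is establishing the weak convergence $\mu_k\times\mu_k\to\mu\times\mu$ of product measures from the weak convergence of $\mu_k\to\mu$; this is routine by approximating functions in $C_0(\R^d\times\R^d)$ by finite sums of tensor products $\psi\otimes\eta$ with $\psi,\eta\in C_0(\R^d)$, as was noted in the paragraph following the previous lemma. Given that, the whole argument is a standard near/far diagonal decomposition and the real content lies entirely in the uniform diffuseness condition.
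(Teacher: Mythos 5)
Your proof is correct and follows essentially the same strategy as the paper: decompose the symmetrized bilinear form into an off-diagonal piece, which is continuous and compactly supported and hence handled by weak convergence of $\mu_k\times\mu_k\to\mu\times\mu$, and a near-diagonal piece controlled uniformly in $k$ by the uniform diffuseness hypothesis. The only cosmetic difference is that you introduce an explicit multiplicative cutoff $\chi(|x-y|)$, whereas the paper uses its built-in regularized kernel $K_{\delta}$ and the decomposition $T=T_{\delta}+T^{\delta}$ to the same effect.
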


\begin{proof}  Fix $\delta>0$. The function $(x,y)\mapsto K_{\delta}(x-y) H_{f,\varphi}(x,y)$ lies in $C_0(\R^d\times \R^d)$, and so the weak convergence of the sequence of measures $\mu_k\times\mu_k$ to $\mu\times\mu$ ensures that
\begin{equation}\begin{split}\nonumber \lim_{k\to \infty}\iint_{\R^d\times\R^d}&K_{\delta}(x-y) H_{f,\varphi}(x,y)\,d\mu_k(y)\,d\mu_k(x) \\
&= \iint_{\R^d\times\R^d}K_{\delta}(x-y) H_{f,\varphi}(x,y)\,d\mu(y)\,d\mu(x).
\end{split}\end{equation}
In other words, $\lim_{k\to\infty}\langle T_{\delta}(f\mu_k), \varphi \rangle_{\mu_k} = \langle T_{\delta}(f\mu), \varphi\rangle_{\mu}.$

On the other hand, if $B(0,R)\supset \supp(f)\cup\supp(g)$, then
\begin{equation}\begin{split}|\nonumber\langle T_{\delta}(f\mu_k), \varphi \rangle_{\mu_k}&- \langle T(f\mu_k), \varphi \rangle_{\mu_k}|\\&=\Bigl|\int_{\mathbb{R}^d}H_{f,\varphi}(x,y)K^{\delta}(x-y)\,d\mu_k(x)\,d\mu_k(y)\Bigl| \\
&\leq C_{f,\varphi}\iint\limits_{\substack{B(0,R)\times B(0,R)\\|x-y|\leq\delta}}\frac{\,d\mu_k(y)\,d\mu_k(x)}{|x-y|^{s-1}}
\end{split}\end{equation}
(and the same inequalities hold true with $\mu$ replacing $\mu_k$ in every instance in the previous line).
Therefore, on account of the defining property of a uniformly diffuse sequence, we see that as $\delta\to 0$, $\langle T_{\delta}(f\mu_k), \varphi \rangle_{\mu_k}$ converges uniformly (in $k$) to $\langle T(f\mu_k), \varphi \rangle_{\mu_k}$, and also $\langle T_{\delta}(f\mu), \varphi \rangle_{\mu}$ converges to $\langle T(f\mu), \varphi \rangle_{\mu}$.  But when combined with the fact that $\lim_{k\to\infty}\langle T_{\delta}(f\mu_k), \varphi \rangle_{\mu_k} = \langle T_{\delta}(f\mu), \varphi\rangle_{\mu}$, this uniform convergence establishes that $\lim_{k\rightarrow \infty}\langle T(f\mu_k), \varphi \rangle_{\mu_k} = \langle T(f\mu), \varphi\rangle_{\mu}.$
\end{proof}

We shall need another simple fact about a uniformly diffuse sequence of measures, namely that the measure of a ball can be controlled uniformly.  More precisely, we have the following lemma.

\begin{lem}\label{uniformpleasgrowth}  Suppose that $\mu_k$ is a sequence of uniformly diffuse measures.   Then for every $R>0$, there is a constant $C(R)$ (that may depend on $R$) such that for every $k$, $\mu_k(B(0,R))\leq C(R)$.
\end{lem}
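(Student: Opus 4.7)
The plan is to exploit the uniform diffuseness hypothesis by covering $B(0,R)$ with sufficiently small balls on which the kernel $|x-y|^{-(s-1)}$ admits a uniform pointwise lower bound, and then summing the resulting mass bounds. Fix $R>0$. Applying the defining property of uniform diffuseness with radius $R+1$ and $\eps=1$ furnishes some $r_0=r_0(R)>0$; shrinking $r_0$ if necessary, one may assume $r_0\leq 1$. Then, for every $k$,
$$\iint_{\substack{B(0,R+1)\times B(0,R+1)\\|x-y|<r_0}}\frac{d\mu_k(x)\,d\mu_k(y)}{|x-y|^{s-1}}\leq 1.$$

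A standard volume-packing argument next produces a finite family $\{B_j\}_{j=1}^N$ of balls of radius $r_0/4$ centred at points of $B(0,R)$ covering $B(0,R)$, with $N\leq C(d)(R/r_0)^d$. Because $r_0\leq 1$, each $B_j\subset B(0,R+1)$; furthermore, for any $x,y\in B_j$ one has $|x-y|\leq r_0/2<r_0$. Restricting the previous integral to $B_j\times B_j$ therefore yields
$$\iint_{B_j\times B_j}\frac{d\mu_k(x)\,d\mu_k(y)}{|x-y|^{s-1}}\leq 1$$
uniformly in $j$ and $k$.

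The heart of the proof is to convert this integral bound into a bound on $\mu_k(B_j)$ via a pointwise lower bound on the kernel. For $x,y\in B_j$, $|x-y|\leq r_0/2$, so $\tfrac{1}{|x-y|^{s-1}}\geq (r_0/2)^{1-s}$; this is the one step where one uses that the singularity of the kernel along the diagonal provides uniform control. Substituting yields $\mu_k(B_j)^2\leq Cr_0^{s-1}$, hence $\mu_k(B_j)\leq Cr_0^{(s-1)/2}$. Summing over the cover,
$$\mu_k(B(0,R))\leq \sum_{j=1}^N \mu_k(B_j)\leq NC r_0^{(s-1)/2}\leq CR^d r_0^{(s-1)/2-d}=:C(R),$$
a bound independent of $k$. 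The primary obstacle — and the step where the geometry genuinely enters — is the pointwise kernel lower bound in the final paragraph; everything preceding it is a routine covering computation combined with the defining inequality of uniform diffuseness.
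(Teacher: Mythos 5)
Your argument is correct and essentially identical to the paper's proof, which also covers $B(0,R)$ by $\sim(R/r)^d$ balls of radius comparable to $r$, restricts the uniform-diffuseness integral to each product $B_j\times B_j$, and applies the kernel lower bound $\tfrac{1}{|x-y|^{s-1}}\gtrsim r^{1-s}$ to extract $\mu_k(B_j)\lesssim r^{(s-1)/2}$. The only cosmetic differences are the enlarged ball ($R+1$ in place of the paper's $2R$) and the ball radius ($r_0/4$ in place of $r/2$).
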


\begin{proof}Let $R>0$.  By hypothesis, there exists $r>0$ (which we may take to be smaller than $R$) such that
$$\iint\limits_{\substack{B(0,2R)\times B(0,2R)\\|x-y|<r}}\frac{\,d\mu_k(x)\,d\mu_k(y)}{|x-y|^{s-1}}\leq 1
$$
for all $k$.

Now, we may cover $B(0,R)$ with $C\bigl(\tfrac{R}{r}\bigl)^d$ balls $B(x_j, \tfrac{r}{2})$ with $x_j\in B(0,R)$, and $C>0$ depending on $d$. For each $j$,
$$\bigl\{(x,y)\in B(0,2R)\times B(0,2R) :|x-y|<r\bigl\}\supset B(x_j,\tfrac{r}{2})\times B(x_j,\tfrac{r}{2}).
$$
Consequently,
$$\frac{\mu(B(x_j, \tfrac{r}{2}))^2}{r^{s-1}}\leq\iint\limits_{B(x_j,\tfrac{r}{2})\times B(x_j,\tfrac{r}{2})}\frac{\,d\mu_k(x)\,d\mu_k(y)}{|x-y|^{s-1}}\leq 1.
$$
But then,
$$\mu(B(0,R))\leq C\Bigl(\frac{R}{r}\Bigl)^dr^{\tfrac{s-1}{2}},
$$
which yields the required estimate.
\end{proof}

The uniform growth at infinity condition plays a crucial role in the next convergence result.  In order to state it, we shall need to define a space of test functions.  For a measure $\mu$, and $R>0$, define
$$\Phi_R^{\mu} = \Bigl\{f\in \Lip_0(B(0,R)): \|f\|_{\Lip}<1\text{ and }\int_{\R^d}f\,d\mu=0\Bigl\},
$$
and
$$\Phi^{\mu} = \Bigl\{f\in \Lip_0(\R^d): \|f\|_{\Lip}<1\text{ and }\int_{\R^d}f\,d\mu=0\Bigl\}.
$$
We shall follow the notation that $\Phi_{R}^{\mu} = \Phi^{\mu}$ if $R=+\infty$.

\begin{lem}  Suppose that $\mu_k$ is a uniformly diffuse sequence of measures with uniformly restricted growth that converges weakly to a measure $\mu$ (and so $\mu$ is diffuse, and has restricted growth at infinity).  Suppose that $\gamma_k$ is a non-negative sequence converging to zero, and $R_k\in (0,+\infty]$ is a sequence converging to $R\in (0, +\infty]$.

If, for every $k$,
$$|\langle T(f\mu_k), 1 \rangle_{\mu_k}|\leq \gamma_k \text{ for every } f\in \Phi_{R_k}^{\mu_k},
$$
then
$$|\langle T(f\mu), 1\rangle_{\mu}|=0 \text{ for every } f\in \Phi_{R}^{\mu}.
$$
\end{lem}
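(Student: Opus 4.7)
The plan is, given $f\in \Phi_R^\mu$, to construct an approximating sequence $f_k\in \Phi_{R_k}^{\mu_k}$ so that $\langle T(f_k\mu_k),1\rangle_{\mu_k}\to \langle T(f\mu),1\rangle_\mu$; the hypothesis will then force the limit to be zero. Without loss of generality I assume $\mu(B(0,R))>0$, else $f\mu\equiv 0$ and the conclusion is trivial. Fix once and for all an $\eta\in \Lip_0(B(0,R))$ with $\int \eta\,d\mu=1$, and choose $R_0<R$ so large that $\supp(f)\cup \supp(\eta)\subset B(0,R_0)$. Applying weak convergence to the continuous compactly supported test functions $f$ and $\eta$ gives $\int f\,d\mu_k\to 0$ and $\int \eta\,d\mu_k\to 1$. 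Set $c_k=(\int f\,d\mu_k)/(\int \eta\,d\mu_k)\to 0$ and $f_k=f-c_k\eta$. Then $\int f_k\,d\mu_k=0$, $\|f_k\|_{\Lip}\leq \|f\|_{\Lip}+|c_k|\|\eta\|_{\Lip}<1$ for all sufficiently large $k$, and $\supp(f_k)\subset B(0,R_0)\subset B(0,R_k)$ eventually; hence $f_k\in \Phi_{R_k}^{\mu_k}$ eventually, and the hypothesis yields $|\langle T(f_k\mu_k),1\rangle_{\mu_k}|\leq \gamma_k\to 0$.

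To pass from Lemma \ref{compsuppconv}, which handles only compactly supported test functions, to the bounded function $1$, I will fix a cutoff $\psi\in \Lip_0(\R^d)$ with $\psi\equiv 1$ on $B(0,R_1)$ and $\psi\equiv 0$ outside $B(0,2R_1)$, where $R_1\geq 2R_0$ is a large parameter to be chosen. Since $f_k\mu_k$ is balanced and supported in $B(0,R_0)$, the representation of $\langle T\nu,1\rangle_{\mu_k}$ for balanced $\nu\in\BMD(\mu_k)$ set up in the $\BMD$ subsection gives
\begin{equation}\nonumber
\langle T(f_k\mu_k),1\rangle_{\mu_k}=\langle T(f_k\mu_k),\psi\rangle_{\mu_k}+\int T(f_k\mu_k)(x)[1-\psi(x)]\,d\mu_k(x),
\end{equation}
and the analogous splitting holds for $\mu$. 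Expanding $f_k=f-c_k\eta$ in the main term by linearity, applying Lemma \ref{compsuppconv} to each of $\langle T(f\mu_k),\psi\rangle_{\mu_k}$ and $\langle T(\eta\mu_k),\psi\rangle_{\mu_k}$ (both involving only $\Lip_0$ data), and using $c_k\to 0$, I obtain $\langle T(f_k\mu_k),\psi\rangle_{\mu_k}\to \langle T(f\mu),\psi\rangle_{\mu}$.

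The hard part will be controlling the tail term uniformly in $k$. Exploiting that $f_k\mu_k$ is a balanced signed measure supported in $B(0,R_0)$, the standard antisymmetry cancellation and H\"{o}lder continuity of $K$ (as already used in the $\BMD$ subsection) give, for $|x|\geq 2R_0$,
\begin{equation}\nonumber
|T(f_k\mu_k)(x)|\leq \frac{C}{|x|^{s+\alpha}}\int_{B(0,R_0)}|y|^\alpha\,d|f_k\mu_k|(y)\leq \frac{C}{|x|^{s+\alpha}},
\end{equation}
where the last inequality uses $\|f_k\|_\infty\leq 2R_0\|f_k\|_{\Lip}$ and the uniform bound $\mu_k(B(0,R_0))\leq C(R_0)$ from Lemma \ref{uniformpleasgrowth}. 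Therefore
$\bigl|\int T(f_k\mu_k)[1-\psi]\,d\mu_k\bigr|\leq C\int_{|x|\geq R_1}|x|^{-s-\alpha}\,d\mu_k(x)$,
and the uniformly restricted growth at infinity makes this arbitrarily small, uniformly in $k$, by taking $R_1$ large; the same holds with $\mu$ in place of $\mu_k$. A standard $\eps$-argument (choose $R_1$ to kill both tails, then invoke the main-term convergence) yields $\langle T(f_k\mu_k),1\rangle_{\mu_k}\to \langle T(f\mu),1\rangle_\mu$, and combined with $\gamma_k\to 0$ this forces $\langle T(f\mu),1\rangle_\mu=0$. The main obstacle is precisely this passage to the non-compactly-supported test function $1$: one must combine the tail decay of $T\nu$ for balanced $\nu$ (which requires the H\"{o}lder regularity of $K$ together with a uniform mass bound on $|f_k\mu_k|$) with the uniformly restricted growth of $\mu_k$, and neither ingredient alone suffices.
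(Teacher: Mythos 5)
Your proposal is correct and follows essentially the same approach as the paper's proof: construct approximating test functions $f_k=f-c_k\eta\in\Phi_{R_k}^{\mu_k}$ by balancing with a fixed compactly supported bump (the paper uses $\rho$ normalized in $L^1(\mu)$, you use $\eta$ with $\int\eta\,d\mu=1$; same idea), invoke Lemma~\ref{compsuppconv} for the compactly supported part, and control the tail using the decay $|T(f_k\mu_k)(x)|\lesssim|x|^{-s-\alpha}$ of balanced compactly supported measures together with the uniformly restricted growth at infinity and the uniform mass bound from Lemma~\ref{uniformpleasgrowth}. The only cosmetic difference is that you carry out the split into main term plus tail via the explicit representation of $\langle T\nu,1\rangle$ for balanced $\nu$, whereas the paper writes $1=\varphi+(1-\varphi)$ and bounds $\langle T(f_k\mu_k),1-\varphi\rangle_{\mu_k}$ directly, but these are the same computation.
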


\begin{proof} If $\mu(B(0,R))=0$, then there is nothing to prove, so let us assume that $\mu(B(0,R))>0$.  Fix $f\in \Phi_R^{\mu}$.  Then there exists $R'\in(0,R)$ such that $\mu(B(0,R'))>0$, $\supp(f)\subset B(0,R')$.  Clearly $R'\leq R_k$ for all sufficiently large $k$.  Choose a non-negative function $\rho\in \Lip_0(B(0,R'))$ with $\|\rho\|_{L^1(\mu)}=1$.  If $k$ is large enough, then $\|\rho_k\|_{L^1(\mu_k)}\geq \tfrac{1}{2}$.  For these $k$, set $f_k = f - \lambda_k \rho$, where $\lambda_k = \bigl(\int_{\R^d}\rho \,d\mu_k \bigl)^{-1}\int_{\R^d}f\,d\mu_k$.  Note that $\lambda_k\to 0$ as $k\to \infty$.  Consequently, for large enough $k$, $f_k\in \Phi_{R_k}^{\mu_k}$ and so $|\langle T(f_k\mu_k),1\rangle_{\mu_k}|\leq \gamma_k$.

For the remainder of the proof, $C>0$ denotes a constant that may depend on $f$, $\rho$, and $R'$, as well as $d$, $s$ and $\alpha$, and it may change from line to line.

Since $\lambda_k$ is a bounded sequence, $|f_k(x)|\leq C$ for every $x\in \mathbb{R}^d$.  Thus, for $x\not\in B(0, 2R')$, the $\mu_k$-mean zero property of $f_k$ yields that
\begin{equation}\begin{split}\nonumber|T&(f_k\mu_k)(x)| = \Bigl|\int_{\mathbb{R}^d}[K(x-y)-K(x)]f_k(y)\,d\mu_k(y)\Bigl|\\
&\leq \frac{C}{|x|^{s+\alpha}}\int_{\R^d}|f_k(y)||y|^{\alpha}\,d\mu_k(y) \leq C \sup_k \mu_k(B(0,R')) \frac{1}{|x|^{s+\alpha}}.
\end{split}\end{equation}
The same estimate also holds with $\mu_k$ and $f_k$ replaced by $\mu$ and $f$.  From Lemma \ref{uniformpleasgrowth}, we infer that, for $x\not\in B(0, 2R')$,
$$\sup_k |T(f_k\mu_k)(x)| \leq \frac{C}{|x|^{s+\alpha}}, \text{ and } |T(f\mu)(x)| \leq \frac{C}{|x|^{s+\alpha}}.
$$

Let  $\varphi\in \Lip_0(\mathbb{R}^d)$, $0\leq \varphi\leq1$.  Then the uniformly restricted growth of the sequence $\mu_k$ ensures that $\sup_k\bigl|\int_{\R^d}(1-\varphi(x))\frac{1}{|x|^{s+\alpha}}d(\mu_k+\mu)(x)\bigl|$ can be made as small as we want by choosing $\varphi$ to be identically equal to $1$ on a ball of sufficiently large radius, centred at the origin.

Let $\eps>0$.  Since $|\langle T(f_k\mu_k), 1-\varphi\rangle_{\mu_k}|\leq C\bigl|\int_{\R^d}(1-\varphi(x))\frac{1}{|x|^{s+\alpha}}d\mu_k(x)\bigl|$ if $\varphi\equiv 1$ on $B(0, 2R')$, the observations of the previous paragraph ensure that, with a judicious choice of $\varphi$,
$$|\langle T(f_k\mu_k), 1-\varphi\rangle_{\mu_k}|\leq \eps \text{ for all }k,\text{ and }|\langle T(f\mu), 1-\varphi\rangle_{\mu}|\leq \eps.
$$
However, by Lemma \ref{compsuppconv}, $\lim_{k\to \infty}\langle T(f\mu_k), \varphi\rangle_{\mu_k} = \langle T(f\mu), \varphi\rangle_{\mu}$, and also $\lim_{k\to \infty}|\langle T(\lambda_k\rho \mu_k), \varphi \rangle_{\mu_k}| =\bigl[ \lim_{k\to \infty}\lambda_k\bigl]  \cdot |\langle T(\rho\mu),\varphi\rangle_{\mu}|=0$, so $$\lim_{k\to \infty}\langle T(f_k\mu_k), \varphi\rangle_{\mu_k} = \langle T(f\mu), \varphi\rangle_{\mu}.$$  Bringing everything together, we see that \begin{equation}\begin{split}\nonumber |\langle T(f\mu), 1\rangle_{\mu}|&\leq \eps+\lim_{k\to \infty}|\langle T(f\mu_k, \varphi\rangle_{\mu_k}|\leq  2\eps+\limsup_{k\to \infty}|\langle T(f_k\mu_k, 1\rangle_{\mu_k}|\\
& \leq 2\eps+\limsup_{k\rightarrow\infty}\gamma_k = 2\eps,\end{split}\end{equation} from which the lemma follows.
\end{proof}

An immediate consequence of this lemma will prove to be a useful result in its own right.

\begin{cor}  Suppose that $\mu_k$ is a sequence of uniformly diffuse reflectionless measures with uniformly restricted growth that converges weakly to a measure $\mu$.  Then $\mu$ is a reflectionless measure.
\end{cor}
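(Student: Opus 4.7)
The proof is essentially a direct application of the lemma immediately preceding the corollary. The plan is to verify the hypotheses of that lemma with $R_k \equiv +\infty$ and $\gamma_k \equiv 0$.

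First I would check that $\mu$ inherits the structural hypotheses needed to speak of reflectionlessness. Since $(\mu_k)$ is uniformly diffuse with uniformly restricted growth, the earlier lemma of this section guarantees that the weak limit $\mu$ is itself diffuse and has restricted growth at infinity. In particular, the bilinear form $\langle T(f\mu),1\rangle_\mu$ is well-defined for every $f \in \Lip_0(\R^d)$, so the reflectionless condition is meaningful for $\mu$.

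Next, I would translate the reflectionless property of each $\mu_k$ into the form demanded by the previous lemma. By definition, for each $k$ we have $\langle T(f\mu_k), 1\rangle_{\mu_k} = 0$ for every $f \in \Lip_0(\R^d)$ with $\int f\,d\mu_k = 0$, i.e. for every $f \in \Phi^{\mu_k} = \Phi_{+\infty}^{\mu_k}$. This is precisely the hypothesis of the preceding lemma with $R_k = +\infty$ and $\gamma_k = 0$ (which trivially converges to $0$, and $R_k$ trivially converges to $R = +\infty$). Applying that lemma gives $\langle T(f\mu), 1\rangle_\mu = 0$ for every $f \in \Phi_{+\infty}^\mu = \Phi^\mu$, which is exactly the defining property of $\mu$ being reflectionless.

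There is really no obstacle here beyond bookkeeping; the work is all packaged inside the previous lemma. The only point worth emphasizing is the convention $\Phi_R^\mu = \Phi^\mu$ when $R = +\infty$, which makes the passage from the finite-radius statement to the full reflectionless property immediate, and the observation that we are free to take $\gamma_k = 0$ since the reflectionless identity holds with strict equality at every finite stage.
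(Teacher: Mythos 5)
Your proof is correct and matches the paper's argument exactly: the corollary is obtained by applying the preceding lemma with $\gamma_k \equiv 0$ and $R_k \equiv +\infty$. The extra remarks about $\mu$ inheriting diffuseness and restricted growth, and about the convention $\Phi^\mu_{+\infty} = \Phi^\mu$, are accurate bookkeeping that the paper leaves implicit.
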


To prove the corollary, just pick $\gamma_k=0$, and $R_k=+\infty$ in the assumptions of the lemma prior to it.  Our final convergence lemma concerns the pointwise convergence of the potential $\Tbar_{\mu_k}(1)$ when $\mu_k$ is a sequence of measures.

\begin{lem}  Suppose that $\mu_k$ is a sequence of non-trivial uniformly diffuse reflectionless measures with uniformly restricted growth that converges weakly to a measure $\mu$ (and so $\mu$ is reflectionless).  Let $x\in \mathbb{R}^d$.  Assume that $\mu$ is non-trivial, and that there is a ball $B(x,\delta)$ that is disjoint from $\bigcup_{k\geq 1}\supp(\mu_k)$.  Then $$\lim_{k\to\infty}\Tbar_{\mu_k}(1)(x) = \Tbar_{\mu}(1)(x).$$
\end{lem}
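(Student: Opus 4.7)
The plan is to reduce the convergence statement to a form where weak convergence can be applied directly. First, since the open ball $B(x,\delta)$ is disjoint from each $\supp(\mu_k)$, lower semicontinuity of weak convergence on open sets gives $\mu(B(x,\delta)) \leq \liminf_k \mu_k(B(x,\delta)) = 0$, so $B(x,\delta)$ is also disjoint from $\supp(\mu)$. This ensures $\delta_x \in \MBD(\mu_k) \cap \MBD(\mu)$; moreover, since the kernel $K^\delta$ is supported in $B(0,\delta)$ and neither measure charges $B(x,\delta)$, the corrections $\langle T^\delta[\delta_x],1\rangle_{\mu_k}$ and $\langle T^\delta[\delta_x],1\rangle_\mu$ both vanish. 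Identity (\ref{deltprodrep2}) therefore reduces the claim to $\Tbar_{\mu_k,\delta}(1)(x) \to \Tbar_{\mu,\delta}(1)(x)$.

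To build compatible probability balancing measures, I would pick $\eta_0 \in \Lip_0(\R^d)$ with $\lambda := \int \eta_0\,d\mu > 0$ (possible since $\mu$ is non-trivial). Weak convergence gives $\lambda_k := \int \eta_0\,d\mu_k \to \lambda$, so for all large $k$ I can set $\eta^{(k)} := \eta_0/\lambda_k$ and $\eta := \eta_0/\lambda$; these are all supported in $\supp(\eta_0)$, satisfy $\int \eta^{(k)}\,d\mu_k = \int \eta\,d\mu = 1$, and $\eta^{(k)} \to \eta$ in $\Lip$-norm. Reflectionlessness of $\mu_k$ ensures that the value $\Tbar_{\mu_k,\delta}(1)(x)$ is independent of this choice, so I can use $\nu_0^{(k)} = \eta^{(k)}\mu_k$ and $\nu_0 = \eta\mu$. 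Using (\ref{Tbardeltdef}), the problem then splits into showing
$$\widetilde{G}^{(k)}_{1,\delta}(x) \to \widetilde{G}_{1,\delta}(x) \quad \text{and} \quad \langle T^\delta(\eta^{(k)}\mu_k),1\rangle_{\mu_k} \to \langle T^\delta(\eta\mu),1\rangle_\mu.$$

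The second convergence will follow from a mild extension of Lemma \ref{compsuppconv}: written in antisymmetric form $\iint K^\delta(z-y')H_{\eta^{(k)},1}(z,y')\,d\mu_k(z)\,d\mu_k(y')$ (permissible since $K^\delta$ is compactly supported), the proof of Lemma \ref{compsuppconv} goes through with the test function $f = \eta^{(k)}$ depending on $k$, provided $\eta^{(k)} \to \eta$ in $\Lip$-norm. For the first convergence, the normalization $\int \eta^{(k)}\,d\mu_k = 1$ lets me write $\widetilde{G}^{(k)}_{1,\delta}(x) = \int \Phi^{(k)}\,d\mu_k$ with
$$\Phi^{(k)}(z) := \int[K_\delta(z-x) - K_\delta(z-y')]\eta^{(k)}(y')\,d\mu_k(y') = K_\delta(z-x) - T_\delta(\eta^{(k)}\mu_k)(z),$$
and analogously $\Phi$ for $\mu$. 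The $\delta$-uniform CZ cancellation (\ref{standard}), combined with $\supp(\eta^{(k)}) \subset \supp(\eta_0)$ and Lemma \ref{uniformpleasgrowth}, yields $|\Phi^{(k)}(z)| \leq C(x)(1+|z|)^{-(s+\alpha)}$ uniformly in $k$, so uniformly restricted growth controls the tails $\int_{|z|\geq R}|\Phi^{(k)}|\,d\mu_k$ for large $R$. On $B(0,R)$, the key observation is that $K_\delta$ extends continuously to $\R^d$ (with $K_\delta(0)=0$), so the test functions $g_z(\cdot) := K_\delta(z-\cdot)\eta(\cdot)$ all lie in $C_0(\R^d)$, are uniformly supported in $\supp(\eta_0)$, and vary continuously in $z$; the family $\{g_z\}_{z\in B(0,R)}$ is thus a precompact subset of $C_0(\R^d)$, and weak convergence $\mu_k \to \mu$ upgrades to $\int g_z\,d\mu_k \to \int g_z\,d\mu$ uniformly in $z$. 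Combined with $\|\eta^{(k)} - \eta\|_\infty \to 0$ and the uniform local mass bound from Lemma \ref{uniformpleasgrowth}, this gives $\Phi^{(k)} \to \Phi$ uniformly on $B(0,R)$, and a compact-plus-tail split then finishes the argument.

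The main technical obstacle will be upgrading the pointwise convergence $T_\delta(\eta\mu_k)(z) \to T_\delta(\eta\mu)(z)$ to uniform convergence in $z$ on compacts; the decisive insight is that continuity of $K_\delta$ at the origin places the family $\{g_z\}_{z\in B(0,R)}$ inside a compact subset of $C_0(\R^d)$, against which weak convergence is automatically uniform once Lemma \ref{uniformpleasgrowth} provides the uniform local mass bound.
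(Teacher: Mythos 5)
Your proof is correct and follows essentially the same skeleton as the paper's: reduce to gauge $\delta$ via (\ref{deltprodrep2}) and the vanishing of the local correction term, normalize a fixed compactly supported $\eta$ to obtain admissible balancing functions for each $\mu_k$, then split off a tail controlled by uniformly restricted growth and handle the main part via weak convergence, with the $T^{\delta}$ term dispatched by Lemma \ref{compsuppconv} after a compact cutoff. The one genuine technical variation is in the main double integral: the paper tests weak convergence of the \emph{product} measures $\mu_k\times\mu_k\to\mu\times\mu$ directly against a $C_0(\R^d\times\R^d)$ function (after cutting off the $y$-variable), whereas you integrate out the $y'$-variable first, write the main term as $\int\Phi^{(k)}\,d\mu_k$, and upgrade pointwise to uniform-on-compacts convergence $\Phi^{(k)}\to\Phi$ via the Arzel\`a--Ascoli precompactness of $\{g_z\}_{z\in B(0,R)}$ in $C_0(\R^d)$, after which a single application of $\mu_k\to\mu$ against $\Phi$ (with a continuous cutoff rather than a sharp ball, to avoid boundary-mass issues) finishes. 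Both devices are of comparable effort; the paper's product-measure route avoids the precompactness step, while yours avoids invoking weak convergence of products. A small point worth noting: rather than extending Lemma \ref{compsuppconv} to $k$-dependent $f=\eta^{(k)}$, you can simply pull out the scalar normalization $\lambda_k^{-1}$ (as the paper does) and apply the lemma verbatim with the fixed function $\eta_0$.
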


\begin{proof}  First notice that $B(x,\delta)\cap\supp(\mu)=\varnothing$ (the weak limit is lower-semicontinuous).  We are required to show that $\lim_{k\to\infty}\Tbar_{\mu_k,\delta}(1)(x) = \Tbar_{\mu,\delta}(1)(x)$ (see Section 3.8).  Since $\mu$ is non-trivial, we can find a non-negative function $\eta\in \Lip_0(\R^d)$ with $\int_{\mathbb{R}^d}\eta \,d\mu=1$.  Set $\lambda_k = \bigl(\int_{\R^d}\eta \,d\mu_k\bigl)^{-1}$.  Then $\lambda_k\to 1$ as $k\to \infty$, and $\eta_k = \lambda_k \eta$ satisfies $\int_{\R^d}\eta_k \,d\mu_k =1$.  We shall henceforth suppose that $k$ is large enough to ensure that $\lambda_k\in (\tfrac{1}{2},2)$.

Recalling (\ref{Tbardeltdef}), write \begin{equation}\begin{split}\nonumber\Tbar_{\mu_k,\delta}(1)(x) = & \iint_{\R^d\times \R^d}[K_{\delta}(y-x) - K_{\delta}(y-z)]\eta_k(z)\,d\mu_k(y)\,d\mu_k(z)\\
& - \langle T^{\delta}(\eta_k\mu_k),1\rangle_{\mu_k}.\end{split}\end{equation}  Here the reflectionless property of $\mu_k$ was used insofar as the value of $\Tbar_{\mu_k,\delta}(1)(x)$ should be independent of the choice of the smooth probability measure $\nu_0$ (which we take to be $\eta_k\mu_k$).

Now choose a non-negative function $\varphi\in \Lip_0(\R^d)$, $0\leq\varphi\leq 1$ on $\mathbb{R}^d$, which is identically equal to $1$ on a ball $B(0,R)$, so that  $B(0, \tfrac{R}{2})\supset \supp(\eta)\cup\{x\}$.  Then
$$\Bigl|\iint_{\R^d\times \R^d}[K_{\delta}(y-x) - K_{\delta}(y-z)](1-\varphi(y))\eta_k(z)\,d\mu_k(y)\,d\mu_k(z)\Bigl|
$$
is bounded by a constant multiple of
\begin{equation}\begin{split}\nonumber\int_{\R^d} \frac{(1-\varphi(y))}{|y|^{s+\alpha}}\,d\mu_k(y)&\int_{\mathbb{R}^d}|x-z|^{\alpha}\eta_k(z)\,d\mu_k(z)\\&\leq C\int_{\R^d} \frac{(1-\varphi(y))}{|y|^{s+\alpha}}\,d\mu_k(y),
\end{split}\end{equation}
where $C>0$ may depend on $x$ and $\eta$ along with $s$, $d$, and $\alpha$.
But the right hand side may be made arbitrarily small for all $k$ (or with $\mu_k$ and $\eta_k$ replaced by $\mu$ and $\eta$), by choosing the radius $R>0$ appearing in the definition of $\varphi$ large enough (the uniformly restricted growth at infinity of the sequence $\mu_k$ is used here).  Thus, for every $\eps>0$, we may choose $R>0$ so large that for every $k$,
$$\Bigl|\iint_{\R^d\times \R^d}[K_{\delta}(y-x) - K_{\delta}(y-z)](1-\varphi(y))\eta_k(z)\,d\mu_k(y)\,d\mu_k(z)\Bigl|\leq\eps,
$$
and
$$\Bigl|\iint_{\R^d\times \R^d}[K_{\delta}(y-x) - K_{\delta}(y-z)](1-\varphi(y))\eta(z)\,d\mu(y)\,d\mu(z)\Bigl|\leq \eps.
$$

On the other hand, the function $$(y,z)\mapsto [K_{\delta}(y-x)-K_{\delta}(y-z)]\varphi(y)\eta(z)$$ lies in $C_0(\mathbb{R}^d\times\mathbb{R}^d)$, and so the weak convergence of $\mu_k$ to $\mu$ yields that the limit
$$\lim_{k\to \infty}\iint_{\R^d\times \R^d}[K_{\delta}(y-x) - K_{\delta}(y-z)]\varphi(y)\lambda_k\eta(z)\,d\mu_k(y)\,d\mu_k(z)
$$
exists, and is equal to
$$\iint_{\R^d\times \R^d}[K_{\delta}(y-x) - K_{\delta}(y-z)]\varphi(y)\eta(z)\,d\mu(y)\,d\mu(z).
$$
Since $T^{\delta}$ has its kernel supported in the ball $\overline{B(0,\delta)}$, $\langle T^{\delta}(\eta_k\mu_k),1\rangle_{\mu_k} = \lambda_k\langle T^{\delta}(\eta\mu_k), \psi\rangle_{\mu_k}$, where $\psi\in \Lip_0(\R^d)$ is identically equal to $1$ on the $\delta$-neighbourhood of $\eta$.  Since $\lambda_k\to 1$, Lemma \ref{compsuppconv}  yields that $\lim_{k\to\infty}\langle T^{\delta}(\eta_k\mu_k),1\rangle_{\mu_k}= \langle T^{\delta}(\eta\mu), \psi\rangle_{\mu}=\langle T^{\delta}(\eta\mu), 1\rangle_{\mu}$  (also recall here that $T^{\delta}= T-T_{\delta}$).

In conclusion, for each $\eps>0$, $\limsup_{k\to\infty}|\Tbar_{\mu_k,\delta}(1)(x) - \Tbar_{\mu,\delta}(1)(x)|\leq 2\eps,$
from  which the lemma follows.
\end{proof}

\end{document}